\documentclass{article}

\usepackage{amsmath,amsfonts,amssymb}
\usepackage{graphicx,subcaption}
\usepackage{braket}
\usepackage{url}
\usepackage{bm}
\usepackage{multirow}
\usepackage{booktabs}
\usepackage{epstopdf}
\usepackage{epsfig}
\usepackage{longtable}
\usepackage{supertabular}
\usepackage{changepage}
\usepackage{tikz}
\usepackage{cases}
\usepackage{setspace}
\usepackage{makecell}
\usepackage{mathtools}
\usepackage{enumerate}
\usepackage{caption}
\usepackage[margin=1in]{geometry}
\usepackage[colorlinks,linkcolor=blue,citecolor=red]{hyperref}
\usepackage{mathrsfs}

\numberwithin{equation}{section}
\allowdisplaybreaks[2]

\usepackage{algorithm}
\usepackage{algorithmicx}
\usepackage{algpseudocode}

\algnewcommand\algorithmicpara{\textbf{Parameters:}}
\algnewcommand\Para{\item[\algorithmicpara]}
\algnewcommand{\AND}{\textbf{and} }
\algnewcommand{\OR}{\textbf{or} }
\algnewcommand{\Break}{\textbf{Break}}

\usepackage{amsthm}

\newtheorem{remark}{Remark}[section]

\newtheorem{lemma}{Lemma}[section]

\newtheorem{proposition}{Proposition}[section]

\newcommand{\ee}{{\rm e}}
\newcommand{\ii}{\mathrm{i}}
\newcommand{\dd}{\mathrm{d}}
\newcommand{\pp}{\partial}

\newcommand{\RR}{\mathbb{R}}
\newcommand{\CFL}{{\rm CFL}}
\newcommand{\Span}{\mathrm{span}}
\newcommand{\Null}{\mathrm{Null}}
\newcommand{\ra}{\rightarrow}

\newcommand{\swH}{\mathcal{H}}
\newcommand{\awH}{\bar{H}}

\DeclareMathOperator*{\argmin}{arg\,min}

\title{Solving Vlasov-Poisson system with an adaptive Hermite spectral method}

\author{
    Sihong Shao\thanks{CAPT, LMAM, and School of Mathematical Sciences, Peking University, Beijing, 100871, China (sihong@math.pku.edu.cn).} \and 
    Yanli Wang\thanks{Beijing Computational Science Research Center, Beijing, 100193, China (ylwang@csrc.ac.cn).} \and 
    Jie Wu\thanks{Center for Data Science, Peking University, Beijing, 100871, China (wujie5@stu.pku.edu.cn).}
}
\date{\today}

\begin{document}

\maketitle

\begin{abstract}
We propose an adaptive Hermite spectral method for the Vlasov-Poisson system based on a recently developed frequency indicator that measures the contribution of the high-order expansion coefficients. Precisely, the symmetrically weighted Hermite basis with a scaling factor is utilized to approximate the distribution function to satisfy the increasing resolution requirement, which, for example, is induced by filamentation. To implement the scaling adjustment, a fast conservative projection operator is constructed in two steps. The first step is to formulate the projection as a constrained optimization problem to preserve key invariants, including mass, momentum, energy, and the $L^2$ norm of the distribution function. The second step is an ODE-based approximation developed to compute the updated expansion coefficients with linear complexity. Numerical experiments with 1D1V and 2D2V settings validate the feasibility and efficiency of this proposed adaptive Hermite method. 

\end{abstract}

\noindent
\textbf{Keywords:}
Adaptive Hermite spectral method, Vlasov-Poisson system, frequency indicator, conservative projection


\section{Introduction}
The dynamics of a collection of charged particles are usually described by the Vlasov equation coupled with electromagnetic field equations. It arises in a wide range of applications, including plasma physics \cite{Swanson2008}, astrophysics \cite{Palmroth2018, Roytershteyn2018}, cosmology \cite{Rampf2021}, etc. A widely used reduced model is the Vlasov-Poisson system, which describes the evolution of the electron distribution function $f(t,\bm{x},\bm{v})$ in a neutralizing background. In dimensionless form, the governing equations read
\begin{alignat}{2}
    \frac{\pp f}{\pp t} + \bm{v}\cdot\nabla_{\bm x}f - \bm{E}\cdot\nabla_{\bm v}f = 0&,  & \bm x&\in\Omega_{\bm x}, \quad\bm v\in\RR^{d_v}, \label{eq:Vlasov}\\
    \bm{E}=-\nabla_{\bm x}\phi&, \qquad& \bm x&\in\Omega_{\bm x}, \label{eq:ElecField}\\
    -\Delta_{\bm x}\phi=\rho_0-\rho&, & \bm x&\in\Omega_{\bm x}, \label{eq:Poisson}
\end{alignat}
with the spatial domain $\Omega_{\bm x}\subset\RR^{d_x}$ and periodic boundary conditions on $\partial\Omega_{\bm x}$. Here, $\rho(t,\bm x)$ denotes the electron density, and $\rho_0$ is the constant background ion density satisfying the neutrality condition $\int_{\Omega_{\bm x}}(\rho_0-\rho)\,\dd\bm{x}=0$. The Vlasov-Poisson system admits several conserved quantities, including the total mass $M$, momentum $\bm J$, energy $W$, and the $L^2$ norm. The first three are defined in terms of macroscopic variables as
\begin{alignat}{2}
    \rho &= \int_{\RR^{d_v}} f \,\dd\bm{v}, & \qquad M &=\int_{\Omega_{\bm x}}\rho \,\dd\bm{x}, \label{eq:M}\\
    \rho \bm{u} &= \int_{\RR^{d_v}} \bm{v} f \,\dd\bm{v}, &\qquad \bm{J} & =\int_{\Omega_{\bm x}}\rho \bm{u}\,\dd\bm{x}, \label{eq:J}\\
    \mathcal{E} &= \int_{\RR^{d_v}} |\bm{v}|^2 f \,\dd\bm{v}, & \qquad W &= W^E+W^K = \frac{1}{2}\int_{\Omega_{\bm x}} |\bm E|^2\,\dd\bm{x}+\frac{1}{2}\int_{\Omega_{\bm x}}\mathcal{E}\,\dd\bm{x}.\label{eq:W}
\end{alignat}

Various numerical methods have been developed for Vlasov systems, broadly categorized as particle-based and grid-based. Among the former, the Particle-in-Cell (PIC) method \cite{Birdsall2018, Tskhakaya2007, Degond2010} is the most widely utilized due to its simplicity and conservation properties. However, it suffers from inherent statistical noise \cite{Degond2010}. For grid-based methods, the spatial and velocity variables can be discretized separately. For the spatial variable, commonly used discretizations include the finite difference methods \cite{Banks2019, Issan2024}, finite volume methods \cite{Vogman2018, Blaustein2024}, finite element methods \cite{Zaki1988, Juno2018}, Fourier spectral methods \cite{Vencels2015, Camporeale2016, Huang2026}, semi-Lagrangian methods \cite{Cheng1976, Xiong2018, Kormann2019, Liu2025} and dynamic low-rank methods \cite{Cassini2022, Guo2024, Coughlin2024, Einkemmer2025}. For the discretization of velocity space, representative choices include Hermite spectral methods \cite{Holloway1996, Schumer1998, Manzini2017}, Legendre spectral methods \cite{Manzini2016}, and Fourier methods \cite{Klimas1983, Eliasson2007}. 

For collisionless Vlasov systems, the distribution function $f$ may develop increasingly fine-scale structures in phase space during its evolution \cite{Denavit1972, Klimas1987}. This phenomenon, known as the filamentation, leads to highly oscillatory behavior in the velocity variable and poses significant challenges for numerical simulations \cite{Klimas1987, Camporeale2016}. In particular, accurate long-time integration requires sufficiently high resolution in the microscopic velocity space \cite{Issan2024, Issan2025}. In this work, we propose an adaptive Hermite method for the Vlasov-Poisson system. The key component is a scaling adaptive technique that dynamically adjusts the scaling factor based on a recently developed frequency indicator \cite{Xia2021, Chou2023, Shao2025}, thereby refining the resolution and effectively capturing filamentation.

It has long been recognized that the parameters of the Hermite basis play a critical role in the accuracy of Vlasov simulations \cite{Holloway1996, Schumer1998}. This observation has motivated the development of adaptive Hermite methods that dynamically adjust the shift and scaling factor. For example, a projection-error-based selection strategy was proposed in \cite{Camporeale2006}, although it was not implemented in practice. More recently, a physics-based adaptive approach was introduced in which the shift and scaling factor are determined from momentum and temperature, respectively \cite{Pagliantini2023}. A key distinction of the present work is the adaptivity strategy. Specifically, a frequency indicator closely related to the projection error is adopted. Existing approaches typically determine the scaling factor from macroscopic physical quantities, whereas the proposed method is constructed directly from the spectral expansion, thereby characterizing the behavior of the distribution function in a different view.

Another distinction lies in the choice of the Hermite basis function. When utilizing the Hermite spectral method to discretize the microscopic velocity space, which is particularly effective for approximating Maxwellian-type distributions, two main formulations arise depending on the choice of weight function: the asymmetrically weighted (AW) Hermite basis \cite{Vencels2015, Parker2015, Delzanno2015, Camporeale2016, Filbet2022} and the symmetrically weighted (SW) Hermite basis \cite{Manzini2017, Issan2024, Issan2025a, Huang2026}. The AW Hermite basis, originating from Grad’s moment equations \cite{Grad1949}, naturally preserves macroscopic moments such as mass, momentum, and energy. In contrast, the SW Hermite basis can not preserve all these moments simultaneously, but it preserves the $L^2$ norm of the distribution function, a property not guaranteed for the AW formulation and closely related to numerical stability \cite{Holloway1996, Schumer1998, Issan2024}. A unified framework incorporating both bases was proposed in \cite{Kormann2021} by introducing an additional parameter. It is worth noting that existing adaptive Hermite methods for Vlasov equations remain largely restricted to the AW formulation. In the present work, the SW Hermite basis is utilized instead. This choice is motivated by a fundamental limitation of the AW formulation in the scaling adaptivity, which is illustrated through the linear Landau damping problem in Sec. \ref{sec:3-AWSW}, indicating that the SW formulation provides a more suitable framework for the scaling adaptivity.

Despite its advantages, using scaling adaptivity with the SW Hermite basis introduces two challenges in the scaling adjustments. First, the standard $L^2$ projection for the SW Hermite basis does not preserve macroscopic variables and the $L^2$ norm. To address this issue, a constrained optimization formulation is introduced and solved to replace the standard projection, enforcing the conservation of both macroscopic variables and the $L^2$ norm. The second challenge concerns the computational efficiency. While the time evolution of the Vlasov equation typically has linear complexity $O(N)$, a direct implementation of the scaling projection requires matrix-vector multiplications with $O(N^2)$ complexity, which can offset the benefits of adaptivity. To overcome this limitation, an ODE-based approximation of the $L^2$ projection is developed, thereby yielding a fast conservative projection algorithm and ensuring the overall efficiency of the proposed method.

The performance of the proposed method is assessed through a series of benchmark problems in both 1D1V and 2D2V settings. Numerical results demonstrate that the adaptive strategy significantly improves the accuracy of both the potential energy and the distribution function. In particular, the filamentation structures are accurately resolved through scaling adaptivity. The conservation of macroscopic variables and the $L^2$ norm are verified, and computational cost comparisons confirm the efficiency of the proposed adaptive Hermite method.

The rest of this paper is organized as follows. The Fourier-Hermite method and the scaling adaptive algorithm are introduced in Sec.~\ref{sec:3}, with a detailed discussion of the limitations of the AW Hermite basis presented in Sec.~\ref{sec:3-AWSW}. In Sec.~\ref{sec:pro_fast}, the conservation projection and the ODE-based algorithm to perform the scaling adjustment are discussed, with the numerical examples shown in Sec.~\ref{sec:num}. The conclusion and several appendices are shown in Sec.~\ref{sec:con}, App.~\ref{sec:appA}, \ref{sec:appB}, and \ref{sec:appC}.

\section{Numerical scheme}\label{sec:3}
In this section, the general framework of the adaptive Hermite spectral method for the Vlasov equation is presented. Following earlier studies in \cite{Xia2021, Xia2021a, Shao2025}, the adaptive algorithm is implemented as a modification step applied after each time step, and the core idea is to monitor the contribution of high-frequency components in the truncated spectral expansion. In the following, we will first introduce the Fourier-Hermite method for the Vlasov-Poisson system, followed by the detailed adaptive scaling algorithm.

\subsection{Fourier-Hermite method} \label{sec:2-F-H}
The Fourier-Hermite method with the symmetrically weighted (SW) Hermite functions is described in this subsection. For simplicity, only the formulation for the 1D1V Vlasov-Poisson system ($d_x = d_v = 1$) is presented, and the extension to high-dimensional settings is straightforward.

For the velocity domain $\RR$, the SW Hermite functions are defined as
\begin{equation}\label{eq:SW_basis}
    \swH_{k}^{\beta}(v)=\frac{\sqrt{\beta}}{\pi^{1/4}\sqrt{2^{k}k!}}H_{k}(\beta v)\ee^{-\frac{\beta^2 v^{2}}{2}}=\sqrt{\beta}\swH_{k}^{1}(\beta v),\qquad k\geqslant0,
\end{equation}
where $\beta > 0$ is a scaling factor. Here, $H_k$ denotes the Hermite polynomials given by Rodrigues' formula:
\begin{equation}\label{eq:Hermite_poly}
    H_k(v)=(-1)^{k}\ee^{v^2}\frac{\dd^k}{\dd v^k}\left[\ee^{-v^2}\right].
\end{equation}
The SW Hermite functions form a complete orthonormal basis of $L^2(\RR)$ and satisfy the orthogonality relation
\begin{equation}\label{eq:orth_SW}
    \int_{\mathbb{R}}\swH_{l}^{\beta}(v)\swH_{k}^{\beta}(v)\,\dd v = \delta_{kl},
\end{equation}
where $\delta_{kl}$ is the Kronecker Delta symbol. The distribution function $f$ is approximated as
\begin{equation}\label{eq:fN}
    f(t,x,v) \approx f_{N}^{\beta}(t,x,v) = \sum_{k=0}^{N}\hat{f}_{k}^{\beta}(t,x)\swH_{k}^{\beta}(v-\zeta),
\end{equation}
where $N$ is the expansion order and $\zeta$ is the shift parameter. As summarized in \cite{Issan2024, Issan2025a}, the conservation of total mass, energy, and the $L^2$ norm in the semi-discrete system is ensured by restricting the expansion order to even integers and setting the shift to zero, i.e., $N\in 2\mathbb{N}$ and $\zeta=0$. Adopting these conditions, the expansion coefficients are given by
\begin{equation}\label{eq:coe_f}
    \hat{f}_{k}^{\beta}(t,x) = \int_{\RR}\swH_{k}^{\beta}(v)f(t,x,v)\,\dd v,\qquad 0\leqslant k\leqslant N.
\end{equation}
Applying the Galerkin method to the Vlasov equation \eqref{eq:Vlasov} and adopting the properties of the Hermite functions yields the following semi-discrete system:
\begin{equation}\label{eq:semi_disc}
    \frac{\pp \hat{f}_{k}^{\beta}}{\pp t} + \frac{\pp}{\pp x} \left(\frac{1}{\beta}\sqrt{\frac{k}{2}}\hat{f}_{k-1}^{\beta} + \frac{1}{\beta}\sqrt{\frac{k+1}{2}}\hat{f}_{k+1}^{\beta}\right) + E\left(-\beta\sqrt{\frac{k}{2}}\hat{f}_{k-1}^{\beta} + \beta\sqrt{\frac{k+1}{2}}\hat{f}_{k+1}^{\beta}\right) = 0,
\end{equation}
where $\hat{f}_{k}^{\beta} \coloneq 0$ for $k<0$ and $k>N$. Moreover, the macroscopic variables such as the density, momentum, and energy can be computed explicitly through the expansion coefficients, as 
\begin{align}
\label{eq:macro_coe}
    \begin{pmatrix}
        \rho \\ \rho u \\ \mathcal{E}
    \end{pmatrix}
         = \sum_{k=0}^{N} \begin{pmatrix}
        I_{k,0}^{\beta} \\ I_{k,1}^{\beta} \\ I_{k,2}^{\beta}
    \end{pmatrix}\,\hat{f}_{k}^{\beta}(t,x) \eqqcolon\mathcal{I}_\beta^\top\bm{f}^{\beta},
\end{align}
with
\begin{equation}
    \label{eq:I}
     \mathcal{I}_{\beta}^{ } \coloneq \left(I_{k,r}^{\beta}\right)_{0\leqslant k\leqslant N,\, 0\leqslant r\leqslant2}\in\RR^{(N+1)\times3}, \qquad \bm{f}^{\beta}\coloneq (\hat{f}_{0}^\beta, \hat{f}_1^\beta, \dots, \hat{f}_{N}^\beta)^\top.
\end{equation}
The derivations of the coefficients $I_{k,s}^{\beta}$ can be found in \cite{Kormann2021, Issan2025a}, and listed below. 
\begin{equation}\label{eq:coe_I}
    \begin{aligned}
    I_{k,0}^{\beta} &= \begin{cases}
        2^{\frac{1}{2}}\pi^{\frac{1}{4}}\beta^{-\frac{1}{2}}c_k, \quad& 2\mid k,\\
        0, & 2\nmid k,
    \end{cases}
    \qquad\quad
    I_{k,1}^{\beta} = \begin{cases}
        0, & 2\mid k,\\
        2\pi^{\frac{1}{4}}\beta^{-\frac{3}{2}}c_k^{-1}, \quad& 2\nmid k,
    \end{cases}\\
    I_{k,2}^{\beta} &= \begin{cases}
        \beta^{-2}I_{k,0}^{\beta} + 2^{\frac{3}{2}}\pi^{\frac{1}{4}}\beta^{-\frac{5}{2}}kc_k, \quad\ \,& 2\mid k,\\
        0, & 2\nmid k,
    \end{cases} 
    \end{aligned}    
\end{equation}
and $c_k = \sqrt{(k-1)!!/k!!}$ with the convention $(-1)!! = 0!! = 1$.

For the spatial discretization, a Fourier approximation is utilized. Assuming the spatial domain is $\Omega_x = [0, L]$ with the periodic boundary conditions imposed, the Fourier basis functions are 
\begin{equation}\label{eq:Fourier_bas}
    \exp(2\pi\ii l x / L),   \qquad \text{for}~|l|\leqslant N_x/2,
\end{equation}
with $N_x\in 2\mathbb{N}$ denoting the spatial expansion order. The corresponding collocation points are $x_j=jL/N_x$ for $j=0,1,\ldots, N_x-1$. Accordingly, the distribution function $f$ can be approximated as
\begin{equation}\label{eq:f_NNx}
    f_{N,N_x}^{\beta}(t,x,v) = \sum_{l=-N_x/2}^{N_x/2}\,\sum_{k=0}^{N} g_{k,l}^{\beta}(t)\,\ee^{\ii lx\frac{2\pi}{L}} \swH_{k}^{\beta}(v),\qquad
    g_{k,l}^{\beta}(t) = \frac{1}{L}\int_{0}^{L}\ee^{-\ii lx\frac{2\pi}{L}}\hat{f}_{k}^{\beta}(t,x)\,\dd x.
\end{equation}
It is convenient to introduce the distribution functions at the spatial collocation points as 
\begin{equation}
    \hat{f}_{k,j}^{\beta}(t) \coloneq \sum_{l=-N_x/2}^{N_x/2} g_{k,l}^{\beta}(t)\,\ee^{\ii lx_j\frac{2\pi}{L}}, \qquad 0\leqslant j<N_x.\label{eq:f_kj}
\end{equation}
Similarly, the electric field $E$ is approximated as
\begin{equation}
    E_{N_x}(t,x) = \sum_{l=-N_x/2}^{N_x/2} \widehat{E}_{l}(t)\ \ee^{\ii lx\frac{2\pi}{L}},
\end{equation}
and the values of the electric field $E$ at the collocation points are $E_{j}(t) \coloneq E_{N_x}(t,x_j),0\leqslant j<N_x$.
In the 1D1V setting, the Fourier coefficients of the electric field can be solved directly from the Poisson equation \eqref{eq:Poisson} as
\begin{equation}
    \widehat{E}_{0}\equiv0, \qquad \widehat{E}_{l}(t)=\frac{L}{2\pi \ii l}\sum_{k=0}^{N}I_{k,0}^{\beta}\,g_{k,l}^{\beta}(t), \quad l \neq 0.
\end{equation}
Substituting these approximations into the semi-discrete form \eqref{eq:semi_disc} leads to an ODE system
\begin{equation}\label{eq:ODEs}
\begin{aligned}
    \frac{\dd\hat{f}_{k,j}^{\beta}}{\dd t}
    &+ \sum_{l=-N_x/2}^{N_x/2}\frac{2\pi\ii l}{L} \left(\frac{1}{\beta}\sqrt{\frac{k}{2}}g_{k-1,l}^{\beta} + \frac{1}{\beta}\sqrt{\frac{k+1}{2}}g_{k+1,l}^{\beta}\right)\ \ee^{\ii lx_j\frac{2\pi}{L}}\\
    &\qquad + E_{j}\left(-\beta\sqrt{\frac{k}{2}}\hat{f}_{k-1,j}^{\beta} + \beta\sqrt{\frac{k+1}{2}}\hat{f}_{k+1,j}^{\beta}\right) = 0, \qquad 0\leqslant k\leqslant N,\quad0\leqslant j<N_x.
\end{aligned}
\end{equation}
Transformations between $\hat{f}_{k,j}^{\beta}$ and $g_{k,l}^{\beta}$, as well as between $E_j$ and $\widehat{E}_l$, are efficiently implemented using the Fast Fourier Transform (FFT). The ODE system \eqref{eq:ODEs} is solved by a classical fourth-order Runge-Kutta method (RK4) \cite{Kincaid2009}. The computational cost per time step is $O(N N_x + N N_x \log N_x)$.

When the scaling factor $\beta$ is a fixed constant, as in most existing approaches \cite{Schumer1998, Kormann2021, Issan2024, Issan2025a, Huang2026}, the resulting scheme is referred to as the non-adaptive method. In this work, an adaptive Hermite method is constructed to dynamically adjust $\beta$, thereby significantly enhancing performance when solving the Vlasov system.


\subsection{Scaling adaptive algorithm for the microscopic velocity space}\label{sec:3-scale}
Following the idea of the previous works \cite{Xia2021, Xia2021a, Shao2025}, the frequency indicator for the discretization \eqref{eq:f_NNx} and \eqref{eq:f_kj} is defined by
\begin{equation}\label{eq:ind}
    \mathcal{F}[f_{N,N_x}^{\beta}](t) \coloneq \left(\frac{\displaystyle\sum_{k=N-\bar{N}+1}^{N}\sum_{j=0}^{N_x-1}\left(\hat{f}_{k,j}^{\beta}(t)\right)^2}{\displaystyle\sum_{k=0}^{N}\sum_{j=0}^{N_x-1}\left(\hat{f}_{k,j}^{\beta}(t)\right)^2}\right)^{1/2},
\end{equation}
where $\bar{N}=\max(\lfloor N/3\rfloor,2)$. The indicator $\mathcal{F}[f_{N, N_x}^{\beta}]$ measures the contribution of the highest one-third of the retained Hermite modes, and is shown to be closely related to the lower bound of the approximation error \cite{Chou2023}. The scaling adaptive algorithm is designed to control this indicator within a certain range by dynamically adjusting the scaling factor $\beta$.

When simulating the collisionless Vlasov system, the distribution function $f$ may develop increasingly fine-scale structures in phase space and oscillations in velocity space. Accurately resolving this filamentation phenomenon requires progressively higher resolution. For the Hermite basis function \eqref{eq:SW_basis}, the smallest spacing between collocation points scales approximately as $\frac{1}{\beta\sqrt{N}}$ \cite{Shen2011}. Thus, increasing the scaling factor $\beta$ could enhance the resolution in velocity space for a fixed expansion order $N$, which is precisely the goal of introducing the adaptive scaling algorithm.

During the simulation, as the numerical solution becomes more oscillatory, the high-frequency components of its Hermite expansion grow, resulting in an increase of the indicator $\mathcal{F}[f_{N, N_x}^{\beta}]$ defined by \eqref{eq:ind}. When this indicator exceeds a prescribed range, the scaling adaptive algorithm is activated to adjust the value of $\beta$ accordingly. The algorithm is illustrated in detail below. The pseudocode is listed in App.~\ref{sec:appA}.
\begin{itemize}
\item Generally speaking, the scaling factor $\beta$ may take any positive real value. For practical implementation, a logarithmically spaced set of admissible $\beta$ is introduced,
\begin{equation}\label{eq:set_S}
    S \coloneq \{\beta=q_0^m \mid m\in\mathbb{Z},~\beta_{\min}\leqslant q_0^m\leqslant \beta_{\max}\},
\end{equation}
where the parameter $0<q_0<1$ determines the resolution of the grid in $\beta$, and $\beta_{\min}$, $\beta_{\max}$ specify the range.

\item At initialization, the scaling factor $\beta$ is selected by minimizing the frequency indicator over the set $S$. Assuming that the initial distribution function $f_0(x,v)$ is given in explicit form, the indicator associated with each candidate $\beta \in S$ is evaluated. The one yielding the lowest indicator is chosen as the initial scaling factor 
\begin{equation}\label{eq:beta_init}
    \beta_0 = \argmin_{\beta \in S} \mathcal{F}[f_{N, N_x}^{\beta}](0).
\end{equation}
The corresponding indicator is recorded as the initial reference value
\begin{equation}
    \mathcal{F}^{(s)} = \mathcal{F}[f_{N, N_x}^{\beta_0}](0).
\end{equation}
For subsequent scaling adjustments, a reference range $[\mathcal{F}_l^{(s)}, \mathcal{F}_h^{(s)}]$ is introduced, with lower and upper thresholds defined by
\begin{equation}\label{eq:scale_thres}
    \mathcal{F}_l^{(s)} = \eta_l^{(s)} \mathcal{F}^{(s)}, \qquad \mathcal{F}_h^{(s)} = \eta_h^{(s)} \mathcal{F}^{(s)},
\end{equation}
where $\eta_l^{(s)}<1$ and $\eta_h^{(s)}>1$ are prescribed ratio parameters.

\item After each RK4 time step, the frequency indicator of the numerical solution $f_{N,N_x}^{\beta}(t^n,x,v)$ is evaluated. If the indicator $\mathcal{F}[f_{N,N_x}^{\beta}](t^n)$ is outside the union of two intervals
\begin{equation}\label{eq:scale_ind_range}
    R\coloneq(-\infty,\mathcal{F}_0]\cup [\mathcal{F}_l^{(s)}, \mathcal{F}_h^{(s)}],
\end{equation}
where the constant $\mathcal{F}_0$ is a small positive value introduced to prevent spurious adjustment caused by round-off errors, a bidirectional line search in $\beta$ is activated. See Fig.~\ref{fig:0a} for a schematic illustration. Specifically, the two neighboring points of $\beta$ in $S$ are considered,
\begin{equation}\label{eq:beta_de_in}
    \beta_{\text{de}} = \beta q_0,\qquad \beta_{\text{in}} = \beta / q_0.
\end{equation}
The projections of $f_{N,N_x}^{\beta}$ to these scaling factors, together with their frequency indicators, are computed via
\begin{equation}\label{eq:f_de_in}
    f_{N,N_x}^{\beta_{\text{de}}} = \mathcal{T}^{\beta\ra\beta_{\text{de}}}f_{N,N_x}^{\beta}, \qquad
    f_{N,N_x}^{\beta_{\text{in}}} = \mathcal{T}^{\beta\ra\beta_{\text{in}}}f_{N,N_x}^{\beta},
\end{equation}
where the specific realization of the projection operator $\mathcal{T}^{\beta\ra\beta'}$ is deferred to Sec.~\ref{sec:pro_fast}. Among the three candidates $\beta_{\text{de}}$, $\beta$, and $\beta_{\text{in}}$, the one with the smallest indicator value is chosen as the new $\beta$:
\begin{equation}
    \beta \leftarrow \argmin_{\beta'\in\{\beta_{\rm de}, \beta, \beta_{\rm in}\}}\mathcal{F}[f_{N,N_x}^{\beta'}](t^n).
\end{equation}
This procedure is repeated by updating $\beta$ and searching until either the indicator $\mathcal{F}[f_{N,N_x}^{\beta}](t^n)$ belongs to the union $R$ or a discrete local minimum is reached along the set $\beta\in S$. If the scaling factor $\beta$ is modified in this procedure, denoting $\widetilde{\beta}$ as the finally decided scaling factor, the reference indicator is updated accordingly,
\begin{equation}
    \mathcal{F}^{(s)} \leftarrow \mathcal{F}\left[f_{N,N_x}^{\widetilde{\beta}}\right](t^n),
\end{equation}
and the admissible thresholds \eqref{eq:scale_thres} are recomputed.
\end{itemize}

The scaling adaptive algorithm involves several parameters that affect its sensitivity. A detailed sensitivity analysis of these parameters can be found in \cite{Xia2021}. In this study, the parameter $q_0$ is set to $0.999$, yielding a fine logarithmic grid for the scaling factor $\beta$. The admissible range of $\beta$ is specified by $\beta_{\min}=0.1$ and $\beta_{\max}=30$. The ratios defining the reference indicator range are chosen as $\eta_{l}^{(s)}=0.9992$ and $\eta_{h}^{(s)}=1.0008$. Finally, the round-off parameter is set to $\mathcal{F}_0 = 10^{-13}$.

So far, the framework of the scaling adaptive algorithm has been presented. The complete numerical scheme is summarized as Alg.~\ref{alg:compl_scheme} in App.~\ref{sec:appA}.

\begin{figure}
	\centering
	\begin{subfigure}[b]{0.45\linewidth}
		\includegraphics[width=\textwidth]{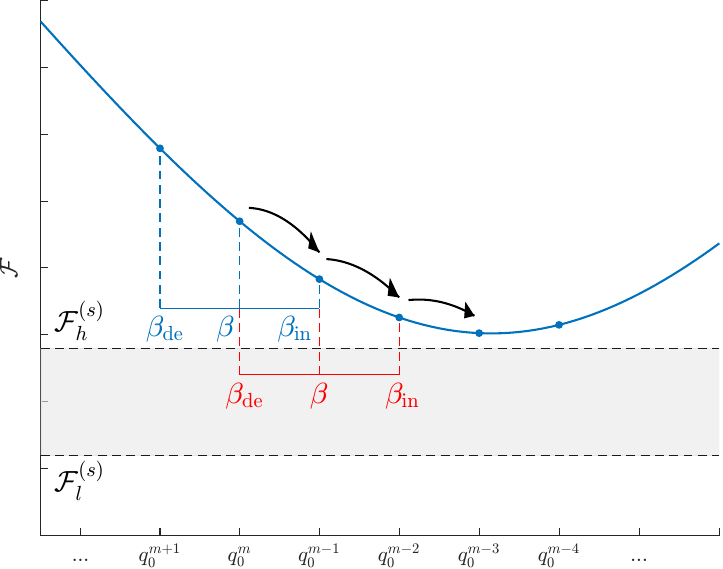}
		\caption*{\footnotesize{$\beta$ ($0<q_0<1$, $m\in\mathbb{N}$)}}
	\end{subfigure}
    \caption{(Adaptive algorithm in Sec.~\ref{sec:3-scale}) Illustration of the scaling adaptive adjustment procedures. When the frequency indicator exits the reference range, the scaling factor $\beta$ is updated via a bidirectional line search on the discrete set $S$ to minimize the indicator.}
	\label{fig:0a}
\end{figure}

\section{Limitation of AW formulation in the scaling adaptivity: the linear Landau damping case}\label{sec:3-AWSW}
An alternative choice of basis functions in Hermite spectral methods is the asymmetrically weighted (AW) Hermite function \cite{Schumer1998, Delzanno2015, Camporeale2016, Filbet2022, Pagliantini2023}. However, when combined with scaling adaptivity, the AW formulation exhibits an inherent restriction on the admissible range of the scaling factor. This limitation is further analyzed in this section with the linear Landau damping problem as a representative example. It also motivates the use of the symmetrically weighted (SW) Hermite basis in this work.



The AW Hermite basis is defined as
\begin{equation}\label{eq:AW_basis}
    \awH_{k}^{\beta}(v)=\frac{\sqrt{\beta}}{(2\pi)^{1/4}\sqrt{2^{k}k!}}H_{k}\left(\frac{\beta v}{\sqrt{2}}\right)\ee^{-\frac{\beta^2 v^{2}}{2}},\qquad k\geqslant0,
\end{equation}
and satisfies the weighted orthogonality relation
\begin{equation}
    \int_{\RR}\awH_k^\beta(v)\awH_l^\beta(v)\,\omega_\beta(v)\,\dd v=\delta_{k,l},\qquad \omega_\beta(v)=\ee^{\frac{\beta^2 v^{2}}{2}}.
\end{equation}
Accordingly, the distribution function is approximated as
\begin{equation}
\label{eq:AP_exp}
    f(t,x,v)\approx\sum_{k=0}^{N}\bar{f}_{k}^{\beta}(t,x)\awH_k^\beta(v), \qquad \bar{f}_{k}^{\beta}(t,x) = \int_{\RR}\awH_k^\beta(v)f(t,x,v)\,\omega_\beta(v)\,\dd v.
\end{equation}
A scaling adaptive algorithm analogous to Sec.~\ref{sec:3-scale} can also be constructed for the AW basis by replacing the SW coefficients $\hat{f}_{k}^{\beta}$ in the indicator \eqref{eq:ind} with the AW coefficients $\bar{f}_{k}^{\beta}$ \cite{Shao2025}. However, a fundamental distinction between the SW and AW scaling adaptive methods lies in their orthogonality and the corresponding approximation spaces. While the SW Hermite basis forms an orthonormal basis of the standard space $L^2(\RR)$, the AW Hermite basis is orthonormal in the weighted space 
\begin{equation}
    L^2_{\omega_\beta}(\RR) \coloneq \left\lbrace u \;\middle|\; \|u\|_{\omega_\beta}^2\coloneq\int_{\RR}|u(v)|^2\,\omega_\beta(v)\,\dd v<\infty\right\rbrace,
\end{equation}
with the weight function $\omega_\beta(v)$ explicitly depending on the scaling factor $\beta$. 
To illustrate the dependence of the approximation space on the scaling parameter $\beta$, assuming $0<\beta_1<\beta_2$, one can immediately obtain
\begin{equation}
    \|u\|_2 \leqslant \|u\|_{\omega_{\beta_1}}\leqslant \|u\|_{\omega_{\beta_2}},
\end{equation}
and consequently
\begin{equation}
    L^2(\RR)\supset L^2_{\omega_{\beta_1}}(\RR)\supset L^2_{\omega_{\beta_2}}(\RR).
\end{equation}
Therefore, as $\beta$ increases, the admissible approximation space for the AW Hermite method becomes strictly smaller. In particular, if $\beta$ is chosen too large, the distribution function $f$ may no longer belong to $L^2_{\omega_\beta}(\RR)$, and the corresponding spectral expansion \eqref{eq:AP_exp} will not converge. This intrinsic restriction limits the flexibility of the scaling adaptivity in the AW Hermite framework for the collisionless Vlasov equation, where increasingly fine structures may require a larger scaling factor $\beta$.

Here, we take the 1D1V linear Landau damping problem for the Vlasov–Poisson system as an illustrative example, with the initial condition 
\begin{equation}
\label{eq:Landau}
    f_0(x,v) = \frac{1+\alpha\cos kx}{\sqrt{2\pi}}\ee^{-\frac{v^2}{2}},
\end{equation}
where $\alpha$ is a small perturbation parameter. One can observe that
\begin{equation}
    \|f_0(x,\cdot)\|_{\omega_\beta}\begin{cases}
        \ <\infty,& \qquad\forall\beta\in(0,\sqrt{2}),\\
        \ =\infty,& \qquad\forall\beta\in(\sqrt{2},\infty),
    \end{cases}\qquad \forall x\in[0,L].
\end{equation}
In particular, for $\beta>\sqrt{2}$, the initial distribution \eqref{eq:Landau} does not belong to $L^2_{\omega_\beta}(\RR)$. Consequently, the AW Hermite expansion is not well-defined in the corresponding weighted space and cannot converge in the $L^2_{\omega_\beta}$ sense. This property persists all the time, which is summarized in Prop.~\ref{prop:sqrt2}.
\begin{proposition}\label{prop:sqrt2}
    Let $f(t,x,v)$ be the solution to the Vlasov-Poisson system with initial condition \eqref{eq:Landau}. Then, for all $x\in[0,L]$ and $t>0$, it holds that 
    \begin{equation}
    \label{eq:L2_fun}
    \|f(t,x,\cdot)\|_{\omega_\beta}\begin{cases}
        \ <\infty,& \qquad\forall\beta\in(0,\sqrt{2}),\\
        \ =\infty,& \qquad\forall\beta\in(\sqrt{2},\infty),
    \end{cases}\qquad \forall x\in[0,L],\quad\forall t>0.
\end{equation}
\end{proposition}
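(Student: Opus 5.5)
The plan is to use the characteristic representation of the solution. By the method of characteristics, $f(t,x,v)=f_0(X(0;t,x,v),V(0;t,x,v))$, where $(X,V)$ solves $\dot X=V$, $\dot V=-E(s,X)$ with $(X,V)(t)=(x,v)$. We assume, as is standard for the 1D1V periodic Vlasov--Poisson problem with smooth, Maxwellian-decaying initial data, that a global classical solution exists with bounded field. Specifically, $\|E(s,\cdot)\|_{L^\infty}\leqslant C_E$ on $[0,t]$. In 1D this follows from $|E|\leqslant \int_0^L|\rho_0-\rho|\,\dd x\leqslant 2M$, so $C_E$ depends only on the conserved mass and not on $t$. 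Integrating the characteristic equations backward then gives
\[
|V(0;t,x,v)-v|\leqslant C_E t \eqqcolon a .
\]

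Next we sandwich $f_0$ between Gaussians. Since $0<1-\alpha\leqslant 1+\alpha\cos kx\leqslant 1+\alpha$, we have
\[
\frac{1-\alpha}{\sqrt{2\pi}}\ee^{-V_0^2/2}\leqslant f(t,x,v)\leqslant \frac{1+\alpha}{\sqrt{2\pi}}\ee^{-V_0^2/2},\qquad V_0=V(0;t,x,v).
\]
From $|v|-a\leqslant|V_0|\leqslant|v|+a$ we get $V_0^2\geqslant (|v|-a)^2\geqslant v^2-2a|v|$ and $V_0^2\leqslant v^2+2a|v|+a^2$. This yields the two-sided bound
\[
c_1\ee^{-v^2/2-a|v|}\leqslant f(t,x,v)\leqslant c_2\ee^{-v^2/2+a|v|},\qquad c_1=\frac{1-\alpha}{\sqrt{2\pi}}\ee^{-a^2/2},\quad c_2=\frac{1+\alpha}{\sqrt{2\pi}}.
\]

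Substituting into $\|f\|_{\omega_\beta}^2=\int f^2\ee^{\beta^2v^2/2}\,\dd v$ gives bounds with integrands $\exp\big((\beta^2/2-1)v^2\pm 2a|v|\big)$. For $\beta<\sqrt2$ the quadratic term has a negative coefficient and dominates the linear term, so the upper bound is finite. For $\beta>\sqrt2$ the quadratic term has a positive coefficient, so the lower bound's integrand grows without bound and the integral diverges. Both conclusions hold for every $x$ and every $t>0$, which proves \eqref{eq:L2_fun}.

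The main obstacle is the first step: justifying the characteristic representation and the uniform bound on $E$, that is, global well-posedness with $E\in L^\infty$. I would cite the classical 1D results (Iordanskii; Cooper--Klimas) rather than reprove them. The mass bound on $|E|$ is elementary once the solution exists, since $f\geqslant 0$ and mass is conserved. A secondary subtlety is that $a$ grows linearly in $t$; this is harmless because only the Gaussian coefficient $\beta^2/2-1$ determines convergence, and the linear terms $\pm 2a|v|$ never change the threshold $\sqrt2$.
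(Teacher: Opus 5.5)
Your proof is correct, but it bounds the velocity displacement along characteristics differently from the paper. The paper invokes the nonlinear Landau damping theorem of Mouhot--Villani to get $\sup_x|E(t,x)|=O(\ee^{-\gamma t})$. Hence the shift $U(t)=\int_0^t E(\tau,X(\tau))\,\dd\tau$ is bounded by a constant independent of $t$. The Gaussian sandwich and the integrability argument that follow are the same as yours.

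You instead use the elementary one-dimensional estimate $|E|\leqslant\int_0^L|\rho_0-\rho|\,\dd x\leqslant 2M$. This only gives a shift $a=C_E t$ that grows linearly in time. As you note, that is enough, because the claim is pointwise in $t$ and only the coefficient $\beta^2/2-1$ of $v^2$ decides integrability.

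Your route buys generality and economy:
\begin{itemize}
\item It needs only classical global well-posedness of the 1D system, plus the zero-mean normalization of $E$, which the paper imposes through $\widehat{E}_0=0$.
\item It works for every $\alpha\in(0,1)$. Mouhot--Villani applies only to perturbations that are sufficiently small in an analytic norm.
\item It applies to any initial datum trapped between two multiples of the same Gaussian.
\end{itemize}
The paper's route buys a time-uniform envelope $f\lesssim \ee^{-v^2/2+C|v|}$. That is more than the statement requires, but it would be needed for uniform-in-time bounds on $\|f(t,x,\cdot)\|_{\omega_\beta}$ when $\beta<\sqrt{2}$.

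One small slip: $V_0^2\geqslant(|v|-a)^2$ fails when $|v|<a$. The inequality you actually use, $V_0^2\geqslant v^2-2a|v|$, still holds for all $v$; it follows directly from writing $V_0=v+w$ with $|w|\leqslant a$.
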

\begin{proof}
    Let $X(s;t,x,v)$, $V(s;t,x,v)$ denote the characteristics associated with the Vlasov equation. Then, it holds that 
    \begin{equation}
        \frac{\dd X}{\dd s}=V(s),\qquad \frac{\dd V}{\dd s}=E(s,X(s)),
    \end{equation}
    with $X(t)=x$, $V(t)=v$. Along with characteristics, the solution is constant, hence
    \begin{equation}
        f(t,x,v)=f_0(X(0;t,x,v),V(0;t,x,v))=\frac{1+\alpha\cos(kX(0))}{\sqrt{2\pi}}\ee^{-\frac{|V(0)|^2}{2}}.
    \end{equation}
    Moreover,
    \begin{equation}
        X(0;t,x,v)=x-\int_0^t V(\tau)\,\dd\tau,\qquad V(0;t,x,v)=v-\int_0^t E(\tau,X(\tau))\,\dd\tau.
    \end{equation}
    For the Landau damping problem, the electric field decays exponentially in time (Thm. 2.6, Eq. (2.19) of \cite{Mouhot2011})
    \begin{equation}
        \|E(t,\cdot)\|_{C^r} \coloneq \sum_{k=0}^{r}\sup_{x}\left|\frac{\pp^k E}{\pp x^k}\right| = O\left(\ee^{-\gamma t}\right),\qquad \text{as }\ t\ra \infty,\qquad \forall r\in\mathbb{N}.
    \end{equation}
    Taking $r=0$ gives
    \begin{equation}
        \sup_{x\in[0,L]}\left|E(t,x)\right|\lesssim\ee^{-\gamma t}.
    \end{equation}
    Define
    \begin{equation}
        U(t)\coloneq\int_0^t E(\tau,X(\tau))\,\dd\tau.
    \end{equation}
    The exponential decay implies
    \begin{equation}
        |U(t)|\leqslant\int_0^t |E(\tau,X(\tau))|\,\dd\tau\leqslant C,\qquad\forall t>0,
    \end{equation}
    for some constant $C$ independent of $t$ and $x$.

    Therefore, the solution $f$ admits an upper bound as 
    \begin{align*}
        f(t,x,v) 
        &\leqslant\frac{1+\alpha}{\sqrt{2\pi}}\ee^{-\frac{(v+U(t))^2}{2}} =\frac{1+\alpha}{\sqrt{2\pi}}\ee^{-\frac{|U(t)|^2}{2}}\ee^{-\frac{v^2}{2}-vU(t)}\leqslant\frac{1+\alpha}{\sqrt{2\pi}}\ee^{-\frac{|U(t)|^2}{2}}\ee^{-\frac{v^2}{2}+C|v|}.
    \end{align*}
    Then, it holds that 
    \begin{equation}
        |f(t,x,v)|^2\ee^{\frac{\beta^2v^2}{2}}\lesssim\exp\left(-\frac{2-\beta^2}{2}v^2+2C|v|\right),
    \end{equation}
    which gives
    \begin{equation}
        \|f(t,x,\cdot)\|_{\omega_\beta}<\infty,\qquad\forall \beta\in(0,\sqrt{2}).
    \end{equation}
    Moreover, the lower bound is given by
    \begin{align*}
        f(t,x,v)
        &\geqslant\frac{1-\alpha}{\sqrt{2\pi}}\ee^{-\frac{(v+U(t))^2}{2}}=\frac{1-\alpha}{\sqrt{2\pi}}\ee^{-\frac{|U(t)|^2}{2}}\ee^{-\frac{v^2}{2}-vU(t)}\geqslant\frac{1-\alpha}{\sqrt{2\pi}}\ee^{-\frac{C^2}{2}}\ee^{-\frac{v^2}{2}-C|v|}.
    \end{align*}
    which gives
    \begin{equation}
        \|f(t,x,\cdot)\|_{\omega_\beta}=\infty,\qquad\forall \beta>\sqrt{2}.
    \end{equation}
    The proof is finished. 
\end{proof}

Prop.~\ref{prop:sqrt2} shows that, for the linear Landau damping problem with the initial condition \eqref{eq:Landau}, the scaling factor in the AW framework must satisfy $\beta<\sqrt{2}$ throughout the whole simulation to ensure the convergence of the Hermite expansion. In contrast, the discussion in Sec.~\ref{sec:3-scale} indicates that the scaling factor $\beta$ should be increased over time to resolve progressively finer structures in velocity space. This conflict might arise in other examples when numerically solving the Vlasov equation with AW Hermite spectral methods, where $\beta$ is intrinsically restricted by an upper bound. Consequently, the scaling adaptivity cannot operate freely, which limits its effectiveness compared with the SW formulation, for which all $\beta\in\RR^+$ are feasible. This provides the primary motivation for adopting the SW Hermite basis in this work.




\section{Fast conservative projection}
\label{sec:pro_fast}
To complete the scaling adaptive Hermite method, the implementation of the projection operator $\mathcal{T}$ in \eqref{eq:f_de_in} is specified in this section. Here, a conservative projection to perform the scaling adjustment is first introduced. The projection is formulated as a constrained optimization problem to enforce the conservation of local mass, momentum, energy, and the $L^2$ norm, which is similar to the strategies developed in Fourier spectral methods to enforce moment conservation \cite{Gamba2009, Zhang2017, Pareschi2022, Cai2024}. Here, the approach is extended to the Hermite spectral method, with an additional quadratic constraint to preserve the $L^2$ norm. 

Then, an ODE-based approximation is proposed to reduce the computational complexity to $O(N)$, following the ideas in \cite{Cai2010a, Hu2020} developed for the AW Hermite method, which contrasts with the $O(N^2)$ complexity of the previous work \cite{Shao2025}. Combining two components together yields a fast conservative projection operator $\mathcal{T}$, thereby completing the construction of the adaptive Hermite method for the Vlasov equations.

\subsection{Projection with moments and \texorpdfstring{$L^2$}{L2} norm conservation}\label{sec:3-proj}
Unlike the scaling projections utilized in the AW Hermite basis \cite{Pagliantini2023, Shao2025}, the standard $L^2$ projection associated with the SW basis preserves neither the macroscopic moments (density, momentum, and energy) nor the $L^2$ norm. Therefore, a carefully designed projection operator $\mathcal{T}$ is required when adjusting the scaling factor $\beta$ in \eqref{eq:f_de_in} to ensure both conservation and stability. This motivates the construction of a modified projection operator described below.

Define the truncated SW Hermite space associated with the scaling factor $\beta$ as 
\begin{equation}
    V_{N}^{\beta} = \Span\left\{\swH_{k}^{\beta}(v),\;0\leqslant k\leqslant N\right\}.
\end{equation}
The projection operator $\mathcal{T}^{\beta\ra\beta'}: V_{N}^{\beta}\ra V_{N}^{\beta'}$ maps a function expanded by the basis functions with scaling factor $\beta$ to its approximation expanded by the basis functions with $\beta'$. Without loss of generalization, the temporal and spatial variables are omitted for simplicity. Denote the preimage and the image functions as 
\begin{align}
    f_{N}^{\beta}(v) = \sum_{k=0}^{N} \hat{f}_{k}^{\beta}\swH_{k}^{\beta}(v),\qquad \mathcal{T}^{\beta\ra\beta'}f_{N}^\beta(v) = \sum_{k=0}^{N} \hat{f}_{k}^{\beta'}\swH_{k}^{\beta'}(v).\label{eq:proj_Tf}
\end{align}
The rest of this subsection aims to determine the new coefficients $\bm{f}^{\beta'} = (\hat{f}_{0}^{\beta'}, \cdots, \hat{f}_{N}^{\beta'})^T$.
\begin{remark}
A natural approach to obtain $\hat{f}_{k}^{\beta'}$ is utilizing the standard $L^2$ projection, denoted as
\begin{align}\label{eq:org_pro}
    \tilde{f}_{k}^{\beta'} &\coloneq \int_{\RR}\swH_{k}^{\beta'}(v)f_{N}^{\beta}(v)\,\dd v,\qquad k = 0,\cdots, N.
\end{align}
However, this $L^2$ projection cannot preserve the density, momentum, energy, or $L^2$ norm of $f_N^{\beta}$.   
\end{remark}
To restore these conservation properties, a constrained optimization problem is proposed, and $\bm{f}^{\beta'}$ is therefore defined as the solution of the following constrained optimization problem
\begin{subequations}
\label{eq:opt_problem}
    \begin{align}
    \min_{\bm{h}\in \RR^{N+1}}\quad &\ \bigl\|\bm{h}-\tilde{\bm{f}}^{\beta'}\bigr\|^2, \label{eq:opt_obj}\\
    \text{s.t.}\quad &\ \mathcal{I}_{\beta'}^\top\bm{h} = \mathcal{I}_\beta^\top\bm{f}^\beta, \label{eq:opt_cons}\\[0.5ex]
    &\ \bigl\|\bm{h}\bigr\|=\bigl\|\bm{f}^\beta\bigr\|, \label{eq:opt_L2}
\end{align}
\end{subequations}
where $\|\bm{h}\|$ denotes the $L^2$ norm of a vector $\bm{h}\in\RR^{N+1}$, $\bm{f}^{\beta}$ is defined in \eqref{eq:I}, and $\tilde{\bm{f}}^{\beta'} = ( \tilde{f}_{0}^{\beta'}, \cdots, \tilde{f}_{N}^{\beta'})$ defined in \eqref{eq:org_pro}. Here, $\mathcal{I}_{\beta}^\top \cdot$ is the matrix defined in \eqref{eq:I}, and the linear constraint \eqref{eq:opt_cons} enforces conservation of the density, momentum, and kinetic energy defined in \eqref{eq:macro_coe}, while the quadratic constraint \eqref{eq:opt_L2} guarantees preservation of the $L^2$ norm.

Moreover, since the electric field $E$ is determined by the density, which is invariant under \eqref{eq:opt_cons}, the total energy defined in \eqref{eq:W} is therefore preserved. The resulting minimizer provides the best approximation of $f_N^\beta$ in $V_N^{\beta'}$ under these conservation constraints. Before introducing the solution to this constrained optimization problem, Lem.~\ref{prop:ITI} is first introduced, whose proof is natural from the definition of the matrix $\mathcal{I}_{\beta}$.
\begin{lemma}
\label{prop:ITI}
    Matrix $\;\mathcal{I}_{\beta'}^\top\mathcal{I}_{\beta'}^{}\in\RR^{3\times3}$ is invertible.
\end{lemma}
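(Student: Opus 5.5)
The plan is to use the fact that a Gram matrix $A^\top A$ is invertible exactly when $A$ has full column rank. Indeed, if $\mathcal{I}_{\beta'}^\top\mathcal{I}_{\beta'}\bm{y}=0$ for some $\bm{y}\in\RR^3$, then
\begin{equation*}
0=\bm{y}^\top\mathcal{I}_{\beta'}^\top\mathcal{I}_{\beta'}\bm{y}=\|\mathcal{I}_{\beta'}\bm{y}\|^2,
\end{equation*}
so $\mathcal{I}_{\beta'}\bm{y}=0$. It therefore suffices to show that the three columns $(I_{k,0}^{\beta'})_k$, $(I_{k,1}^{\beta'})_k$, $(I_{k,2}^{\beta'})_k$ of $\mathcal{I}_{\beta'}\in\RR^{(N+1)\times 3}$ are linearly independent. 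Throughout, $N\in2\mathbb{N}$ and we assume $N\geqslant2$, which is needed for there to be enough rows.

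We treat the columns by parity, using \eqref{eq:coe_I}. The momentum column $(I_{k,1}^{\beta'})_k$ is supported on odd $k$. The density and energy columns are supported on even $k$. Since these supports are disjoint, the momentum column is orthogonal to the other two, and it is nonzero because its $k=1$ entry equals $2\pi^{1/4}\beta'^{-3/2}c_1^{-1}\neq0$. It then remains to show that the density and energy columns are linearly independent.

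For that, we restrict to the rows $k=0$ and $k=2$, both of which exist since $N\geqslant 2$, and form the $2\times2$ minor
\begin{equation*}
\det\begin{pmatrix} I_{0,0}^{\beta'} & I_{0,2}^{\beta'}\\ I_{2,0}^{\beta'} & I_{2,2}^{\beta'}\end{pmatrix}.
\end{equation*}
From \eqref{eq:coe_I} we have $I_{k,2}^{\beta'}=\beta'^{-2}I_{k,0}^{\beta'}+2^{3/2}\pi^{1/4}\beta'^{-5/2}kc_k$. Subtracting $\beta'^{-2}$ times the first column reduces the determinant to
\begin{equation*}
\det\begin{pmatrix} I_{0,0}^{\beta'} & 0\\ I_{2,0}^{\beta'} & 2^{3/2}\pi^{1/4}\beta'^{-5/2}\cdot 2c_2\end{pmatrix}
= I_{0,0}^{\beta'}\cdot 2^{5/2}\pi^{1/4}\beta'^{-5/2}c_2 .
\end{equation*}
Here $I_{0,0}^{\beta'}=2^{1/2}\pi^{1/4}\beta'^{-1/2}c_0>0$ and $c_2=\sqrt{1/2}>0$, so the determinant is nonzero. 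Hence $\mathcal{I}_{\beta'}$ has rank $3$, and $\mathcal{I}_{\beta'}^\top\mathcal{I}_{\beta'}$ is symmetric positive definite, in particular invertible.

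The only real obstacle is the degenerate case. If $N=0$, then $\mathcal{I}_{\beta'}$ has only one row and the lemma is false, so the statement needs $N\geqslant2$, and this should be stated explicitly. Apart from that, the argument is just checking positivity of the explicit constants, and positivity of $\beta'$ makes every factor above nonzero.
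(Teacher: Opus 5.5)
Your argument is correct. The paper gives no written proof beyond saying the lemma is natural from the definition of $\mathcal{I}_\beta$; your verification (parity decoupling of the momentum column, then a nonzero $2\times2$ minor for the density and energy columns) uses the same chessboard structure the paper exploits in App.~B, where $\mathcal{I}_1^\top\mathcal{I}_1$ splits into a $1\times1$ block $c$ and a $2\times2$ block with $\Delta=ad-b^2\neq0$. You are also right that $N\geqslant2$ must be assumed, since the lemma fails for $N=0$; the paper leaves this hypothesis implicit.
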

Then, the solution to this constrained optimization problem is summarized in Prop.~\ref{prop:opt_solu}.

\begin{proposition}\label{prop:opt_solu}
    Let
    \begin{equation}\label{eq:proj_h0}
        \bm{h}_0 \coloneq \mathcal{I}_{\beta'}^{} \left(\mathcal{I}_{\beta'}^\top\mathcal{I}_{\beta'}^{}\right)^{-1} \mathcal{I}_{\beta}^{\top}\bm{f}^\beta,\qquad
        \mathcal{P} \coloneq I_{N+1} - \mathcal{I}_{\beta'}^{}\left(\mathcal{I}_{\beta'}^{\top}\mathcal{I}_{\beta'}^{}\right)^{-1}\mathcal{I}_{\beta'}^{\top},
    \end{equation}
    where $I_{N+1} \in \RR^{N+1\times N+1}$ is the identity matrix. The minimizer $\bm{f}^{\beta'}$ of the optimization problem \eqref{eq:opt_problem} depends on the consistency condition
    \begin{equation}\label{eq:opt_consis}
        \|\bm{h}_0\|\leqslant\|\bm{f}^\beta\|.
    \end{equation}
The solution is divided into the following three cases:
    \begin{enumerate}

        \item If \eqref{eq:opt_consis} holds and $\mathcal{P}\tilde{\bm{f}}^{\beta'}\neq0$, then we have
        \begin{equation}\label{eq:opt_h4}
            \bm{f}^{\beta'} = \bm{h}_0+\alpha\mathcal{P}\tilde{\bm{f}}^{\beta'},  \qquad  \alpha=\frac{\sqrt{\|\bm{f}^\beta\|^2-\|\bm{h}_0\|^2}}{\big\|\mathcal{P}\tilde{\bm{f}}^{\beta'}\big\|}.
        \end{equation}

        \item If \eqref{eq:opt_consis} holds and $\mathcal{P}\tilde{\bm{f}}^{\beta'}=0$, then the minimizer is not unique. The solution is given by
        \begin{equation}\label{eq:alpha_4r}
            \bm{f}^{\beta'} = \bm{h}_0+\alpha\bm{u}_0, \qquad  \alpha=\frac{\sqrt{\|\bm{f}^\beta\|^2-\|\bm{h}_0\|^2}}{\|\bm{u}_0\|}, \qquad \forall \bm{u}_0\in\Null(\mathcal{I}_{\beta'}^\top).
        \end{equation}
           

        \item If \eqref{eq:opt_consis} does not hold, the constraints optimization problem \eqref{eq:opt_problem} is infeasible.  
\end{enumerate}
\end{proposition}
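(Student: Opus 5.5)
The plan is to decompose $\RR^{N+1}$ orthogonally into $\mathrm{Range}(\mathcal{I}_{\beta'})\oplus\Null(\mathcal{I}_{\beta'}^\top)$ and reduce the problem to a projection onto a sphere inside $\Null(\mathcal{I}_{\beta'}^\top)$.

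By Lem.~\ref{prop:ITI}, $\mathcal{I}_{\beta'}^\top\mathcal{I}_{\beta'}$ is invertible, so $\mathcal{Q}\coloneq I_{N+1}-\mathcal{P}$ is the orthogonal projector onto $\mathrm{Range}(\mathcal{I}_{\beta'})$. Likewise $\mathcal{P}$ is the orthogonal projector onto $\Null(\mathcal{I}_{\beta'}^\top)$, and both are symmetric and idempotent. I will show that $\bm{h}$ satisfies \eqref{eq:opt_cons} if and only if $\bm{h}=\bm{h}_0+\bm{u}$ with $\bm{u}\in\Null(\mathcal{I}_{\beta'}^\top)$. Sufficiency follows from $\mathcal{I}_{\beta'}^\top\bm{h}_0=\mathcal{I}_\beta^\top\bm{f}^\beta$. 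For necessity, the difference of two feasible vectors lies in $\Null(\mathcal{I}_{\beta'}^\top)$.

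Since $\bm{h}_0\in\mathrm{Range}(\mathcal{I}_{\beta'})$ and $\bm{u}\perp\mathrm{Range}(\mathcal{I}_{\beta'})$, Pythagoras gives
\begin{equation*}
\|\bm{h}\|^2=\|\bm{h}_0\|^2+\|\bm{u}\|^2 .
\end{equation*}
Hence \eqref{eq:opt_L2} becomes $\|\bm{u}\|^2=\|\bm{f}^\beta\|^2-\|\bm{h}_0\|^2$. This equation has a solution exactly when \eqref{eq:opt_consis} holds, which proves case 3: without \eqref{eq:opt_consis} the feasible set is empty.

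Next, assume \eqref{eq:opt_consis} and set $r\coloneq\sqrt{\|\bm{f}^\beta\|^2-\|\bm{h}_0\|^2}$. I split the objective as
\begin{equation*}
\tilde{\bm{f}}^{\beta'}=\mathcal{Q}\tilde{\bm{f}}^{\beta'}+\mathcal{P}\tilde{\bm{f}}^{\beta'} .
\end{equation*}
Applying Pythagoras again gives
\begin{equation*}
\|\bm{h}-\tilde{\bm{f}}^{\beta'}\|^2=\|\bm{h}_0-\mathcal{Q}\tilde{\bm{f}}^{\beta'}\|^2+\|\bm{u}-\mathcal{P}\tilde{\bm{f}}^{\beta'}\|^2 .
\end{equation*}
The first term does not depend on $\bm{u}$. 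The problem therefore reduces to minimizing $\|\bm{u}-\bm{p}\|^2$, where $\bm{p}\coloneq\mathcal{P}\tilde{\bm{f}}^{\beta'}$, over the sphere of radius $r$ in the subspace $\Null(\mathcal{I}_{\beta'}^\top)$.

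Expanding, $\|\bm{u}-\bm{p}\|^2=r^2+\|\bm{p}\|^2-2\langle\bm{u},\bm{p}\rangle$. Minimizing it is the same as maximizing $\langle\bm{u},\bm{p}\rangle$ subject to $\|\bm{u}\|=r$.

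In case 1, $\bm{p}\neq0$, and Cauchy--Schwarz gives $\langle\bm{u},\bm{p}\rangle\leqslant r\|\bm{p}\|$. Equality holds if and only if $\bm{u}=r\bm{p}/\|\bm{p}\|$, and this vector lies in the null space because $\bm{p}$ does. This yields \eqref{eq:opt_h4} with the stated $\alpha$, and the Cauchy--Schwarz equality condition also gives uniqueness when $r>0$.

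In case 2, $\bm{p}=0$, so the objective is constant on the whole sphere. Every $\bm{u}=\alpha\bm{u}_0$ with $\bm{u}_0\in\Null(\mathcal{I}_{\beta'}^\top)\setminus\{0\}$ and $\alpha=r/\|\bm{u}_0\|$ is then a minimizer, which gives \eqref{eq:alpha_4r}.

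The main obstacle is degenerate edge cases rather than the core argument. When $r=0$, the minimizer is simply $\bm{h}_0$; this agrees with the formulas since $\alpha=0$, but in case 2 the solution is then unique, so the statement's non-uniqueness should be qualified to $r>0$. One should also note that $\Null(\mathcal{I}_{\beta'}^\top)$ is nontrivial, since $N+1>3$, so that $\bm{u}_0\neq0$ exists in case 2. I will state these caveats briefly instead of changing the statement.
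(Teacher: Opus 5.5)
Your proposal is correct and follows essentially the paper's route. You write feasible points as $\bm h=\bm h_0+\bm u$ with $\bm u\in\Null(\mathcal{I}_{\beta'}^\top)$, split $\|\bm h-\tilde{\bm f}^{\beta'}\|^2$ by Pythagoras across $\mathrm{Range}(\mathcal{I}_{\beta'})\oplus\Null(\mathcal{I}_{\beta'}^\top)$, and finish with Cauchy--Schwarz, which the paper phrases as the reverse triangle inequality with equality iff $\bm h-\bm h_0$ is collinear with $\mathcal{P}\tilde{\bm f}^{\beta'}$.

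Your explicit identity $\|\bm h\|^2=\|\bm h_0\|^2+\|\bm u\|^2$ makes two steps cleaner than in the paper. It shows directly that the norm constraint fixes $\|\bm h-\bm h_0\|$, so the lower bound is attained with $\alpha\geqslant0$. It also shows that case 3 is infeasible over the whole feasible set, not just within the ansatz $\bm h_0+\alpha\mathcal{P}\tilde{\bm f}^{\beta'}$. Your caveat that uniqueness holds in case 2 when $r=0$ is also correct.
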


\begin{proof}
Let us first consider the linear constraint \eqref{eq:opt_cons}. Its general solution set can be written as
\begin{equation}
    \bm{h} = \bm{h}_0+\bm{u},\qquad \forall \bm{u}\in\Null(\mathcal{I}_{\beta'}^\top),
\end{equation}
where a particular solution is $\bm{h}_0$ given in \eqref{eq:proj_h0} and $\bm{h}_0\perp\Null(\mathcal{I}_{\beta'}^{\top})$. $\mathcal{P}$ in \eqref{eq:proj_h0} defines the $L^2$ orthogonal projector onto $\Null(\mathcal{I}_{\beta'}^\top)$. The following orthogonality holds
\begin{align}
\label{eq:orth}
	\bm{h}-\bm{h}_0,\,\mathcal{P}\tilde{\bm{f}}^{\beta'}\in \,\Null(\mathcal{I}_{\beta'}^\top), \qquad \bm{h}_0,\,(I_{N+1}-\mathcal{P})\tilde{\bm{f}}^{\beta'}\in \,\Null(\mathcal{I}_{\beta'}^\top)^\perp.
\end{align}
By the triangle inequality and \eqref{eq:orth}, we can deduce that
\begin{equation}\label{eq:tri_ineq}
\begin{aligned}
    \big\|\bm{h}-\tilde{\bm{f}}^{\beta'}\big\|^2
    &= \big\|\bm{h}-\bm{h}_0-\mathcal{P}\tilde{\bm{f}}^{\beta'}\big\|^2 + \big\|\bm{h}_0+\mathcal{P}\tilde{\bm{f}}^{\beta'}-\tilde{\bm{f}}^{\beta'}\big\|^2  
    \geqslant \bigl(\big\|\bm{h}-\bm{h}_0\big\|-\big\|\mathcal{P}\tilde{\bm{f}}^{\beta'}\big\|\bigr)^2 + \big\|\bm{h}_0+\mathcal{P}\tilde{\bm{f}}^{\beta'}-\tilde{\bm{f}}^{\beta'}\big\|^2.
\end{aligned}
\end{equation}
The equality in \eqref{eq:tri_ineq} holds if and only if $\bm{h}-\bm{h}_0$ is collinear with $\mathcal{P}\tilde{\bm{f}}^{\beta'}$. Thus, the minimizer has the form
\begin{equation}
\label{eq:opt_h5}
    \bm{h} = \bm{h}_0+\alpha \mathcal{P}\tilde{\bm{f}}^{\beta'},\qquad \alpha\in\RR.
\end{equation}
Combining \eqref{eq:opt_h5} with the quadratic constraint \eqref{eq:opt_L2} yields
\begin{equation}
\label{eq:opt_h6}
    \bigl\|\bm{h}_0\bigr\|^2+\alpha^2 \bigl\|\mathcal{P}\tilde{\bm{f}}^{\beta'}\bigr\|^2 = \bigl\|\bm{f}^{\beta}\bigr\|^2.
\end{equation}
Then the three cases of the solution are proved successively. 
\begin{enumerate}
    \item The consistency condition \eqref{eq:opt_consis} holds and   $\mathcal{P}\tilde{\bm{f}}^{\beta'}\ne0$. With \eqref{eq:opt_h6}, it is easy to deduce that 
    \begin{equation}
        \label{eq:coe_alpha}
        \alpha=\frac{\sqrt{\|\bm{f}^\beta\|^2-\|\bm{h}_0\|^2}}{\big\|\mathcal{P}\tilde{\bm{f}}^{\beta'}\big\|},
    \end{equation}
    and the first case is proved. 
    \item The consistency condition \eqref{eq:opt_consis} holds and  $\mathcal{P}\tilde{\bm{f}}^{\beta'}=0$. In this case, \eqref{eq:opt_h5} vanishes. The only constraint is $\bm{h}-\bm{h}_0 \in \Null(\mathcal{I}_{\beta'}^\top)$ as indicated in \eqref{eq:orth}. Consequently, the minimizer is not unique. To construct a feasible solution, select an arbitrarily nonzero vector $\bm{u}_0\in\Null(\mathcal{I}_{\beta'}^\top)$. The solution is then written in the form
    \begin{equation}
        \bm{h} = \bm{h}_0+\alpha \bm{u}_0,\qquad \alpha\in\RR.
    \end{equation}
    Similarly, imposing the quadratic constraint \eqref{eq:opt_L2} yields
    \begin{equation}
        \alpha=\frac{\sqrt{\|\bm{f}^\beta\|^2-\|\bm{h}_0\|^2}}{\|\bm{u}_0\|}.
    \end{equation}
    \begin{remark}
    From \eqref{eq:coe_I}, it is easy to verify that $\bm{u}_0$ can be chosen independently of $\beta'$. For example, $   \bm{u}_0=\left(\sqrt{3}/(2\sqrt{2}),0,-\sqrt{3},0,1,0,0,...,0\right)^{\top}$. Though the case $\mathcal{P}\tilde{\bm{f}}^{\beta'}=0$ is discussed, for all the numerical experiments conducted in this work, case of $\|\mathcal{P}\tilde{\bm{f}}^{\beta'}\|\geqslant10^{-14}$ has never been observed. 
    \end{remark}
    \item The consistency condition \eqref{eq:opt_consis} does not hold. In this case, we cannot find a $\alpha \in \RR$ satisfying \eqref{eq:opt_h6}. This also means that the macroscopic moments and $L^2$ norm cannot be conserved simultaneously. Either \eqref{eq:opt_cons} or \eqref{eq:opt_L2} has to be relaxed to make the problem feasible. In this work, priority is given to the conservation of the macroscopic moments. The quadratic constraint \eqref{eq:opt_L2} is therefore removed, and the minimization is performed over the affine space defined by \eqref{eq:opt_cons}. In this case, the solution can be directly given by
    \begin{equation}\label{eq:opt_h3}
        \bm{h} = \bm{h}_0+\mathcal{P}\tilde{\bm{f}}^{\beta'}.
    \end{equation}
    \begin{remark}
        The eigenvalue analysis presented in App.~\ref{sec:appB} indicates that \eqref{eq:opt_consis} does not hold universally. However, for all the numerical experiments conducted in this work, violations of \eqref{eq:opt_consis} have not been observed. 
    \end{remark}
\end{enumerate}

\end{proof}

\begin{remark}
    If, alternatively, preservation of the $L^2$ norm is preferred over moment conservation, the linear constraint \eqref{eq:opt_cons} may be removed. The minimizer of \eqref{eq:opt_problem} under the sole constraint \eqref{eq:opt_L2} is then given by a simple rescaling
    \begin{equation}
        \bm{h}=\frac{\left\|\bm{f}^\beta\right\|}{\big\|\tilde{\bm{f}}^{\beta'}\big\|}\tilde{\bm{f}}^{\beta'}.
    \end{equation}
\end{remark}

The solution of the constrained optimization problem \eqref{eq:opt_problem}, which determines the projected coefficients $\hat{f}^{\beta'}_k$ in \eqref{eq:proj_Tf}, has now been characterized for all cases. The remaining task is the efficient evaluation of the $L^2$ projection coefficients $\tilde{\bm{f}}^{\beta'}$ in \eqref{eq:org_pro}.

\subsection{Fast implementation via ODE-based approximation}\label{sec:fast_pro}
In general, the $L^2$ projection between two SW Hermite expansions with different scaling factors is evaluated via matrix-vector multiplication, leading to a computational complexity of $O(N^{d_v+1})$. A similar cost arises for the AW Hermite basis \cite{Pagliantini2023}. However, incorporating such an $O(N^{d_v+1})$ procedure into the scaling adaptive algorithm will increase the order of complexity and become computationally prohibitive in practice. To address this issue, an ODE-based approach is proposed to approximate the $L^2$ projection with a reduced complexity of $O(N^{d_v})$.

Without loss of generality, only the case $d_v = 1$ is considered, and it is natural to extend to higher dimensions. For $s\in[0,1]$, define an intermediate scaling factor and the associated coefficients as
\begin{align}
    \tilde{h}_l(s)\coloneq \int_{\RR}\swH_l^{b(s)}(v)f_N^\beta(v)\,\dd v, \qquad 
    b(s) = \beta\left(\frac{\beta'}{\beta}\right)^s, \qquad 0\leqslant l\leqslant N. \label{eq:fs}
\end{align}
It is straightforward to verify that $\tilde{h}_l(0)=\hat{f}_l^\beta$ and $\tilde{h}_l(1)=\tilde{f}_l^{\beta'}$. Therefore, $\tilde{h}_l(s)$ defines a continuous path connecting the two sets of Hermite coefficients $\bm{f}^{\beta}$ and $\tilde{\bm{f}}^{\beta'}$, which is summarized in Prop.~\ref{prop:proj_ODE} and the proof is provided in App.~\ref{sec:appC}.

\begin{proposition}\label{prop:proj_ODE}
    The intermediate coefficients $\tilde{h}_l(s)$ defined in \eqref{eq:fs} satisfy
    \begin{equation}\label{eq:proj_ODE}
        \frac{\dd\tilde{h}_l}{\dd s}=\frac{\ln(\beta'/\beta)}{2}\left(\sqrt{l(l-1)}\tilde{h}_{l-2}-\sqrt{(l+1)(l+2)}\tilde{h}_{l+2}\right),\qquad 0\leqslant l\leqslant N,
    \end{equation}
    with $\tilde{h}_{-1}=\tilde{h}_{-2}=\tilde{h}_{N+1}=\tilde{h}_{N+2}\equiv0$. Or it can be rewritten as
    \begin{equation}\label{eq:proj_Lf}
        \frac{\dd\tilde{\bm h}}{\dd s} = \mu \mathcal{L}\tilde{\bm h},
    \end{equation}
    where $\mu = \frac{\ln(\beta'/\beta)}{2}$. $\mathcal{L}=(L_{i,j})\in\RR^{(N+1)\times(N+1)}$ is a skew-symmetric matrix with elements
    \begin{equation}
        L_{j+2,j}=-L_{j,j+2}=\sqrt{(j+1)(j+2)},\qquad j=0,\dots,N-2,
    \end{equation}
    and $L_{i,j}=0$ otherwise.
\end{proposition}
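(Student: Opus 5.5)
The plan is to differentiate the definition \eqref{eq:fs} with respect to $s$ under the integral sign. All $s$-dependence sits in the basis function $\swH_l^{b(s)}$, and $f_N^\beta$ does not depend on $s$. So everything reduces to a three-term identity for $\frac{\dd}{\dd s}\swH_l^{b(s)}(v)$ in terms of $\swH_{l\pm2}^{b(s)}(v)$.

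Interchanging derivative and integral is harmless: $f_N^\beta$ is a polynomial times $\ee^{-\beta^2v^2/2}$, and $\partial_b\swH_l^b$ is a polynomial times $\ee^{-b^2v^2/2}$, with $b$ ranging over the compact interval between $\beta$ and $\beta'$. Dominated convergence therefore applies.

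\textbf{Derivative of the basis function.} Write $\psi_l:=\swH_l^1$, so that $\swH_l^b(v)=\sqrt b\,\psi_l(bv)$ by \eqref{eq:SW_basis}. Since $b'(s)=b\ln(\beta'/\beta)$, the chain rule gives
\begin{equation*}
\frac{\dd}{\dd s}\swH_l^{b(s)}(v)=\ln(\beta'/\beta)\,\sqrt b\,\Bigl(\tfrac12\psi_l(x)+x\psi_l'(x)\Bigr)\Big|_{x=bv}.
\end{equation*}
I then use the two standard recurrences for normalized Hermite functions, which are the same ones behind \eqref{eq:semi_disc}:
\begin{equation*}
x\psi_l=\sqrt{\tfrac l2}\,\psi_{l-1}+\sqrt{\tfrac{l+1}2}\,\psi_{l+1},\qquad \psi_l'=\sqrt{\tfrac l2}\,\psi_{l-1}-\sqrt{\tfrac{l+1}2}\,\psi_{l+1}.
\end{equation*}
Apply the first recurrence to each term of $x\psi_l'$. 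The $\psi_l$ coefficient comes out as $\frac l2-\frac{l+1}2=-\frac12$, which cancels exactly against $\frac12\psi_l$. What remains is
\begin{equation*}
\tfrac12\psi_l+x\psi_l'=\tfrac12\Bigl(\sqrt{l(l-1)}\,\psi_{l-2}-\sqrt{(l+1)(l+2)}\,\psi_{l+2}\Bigr).
\end{equation*}
Multiplying by $\sqrt b$ turns each $\psi_m(bv)$ back into $\swH_m^{b}(v)$.

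\textbf{Integrating against $f_N^\beta$.} Integrating the identity against $f_N^\beta$ yields \eqref{eq:proj_ODE} with $\mu=\ln(\beta'/\beta)/2$. The boundary terms $\tilde h_{-1}=\tilde h_{-2}=0$ hold automatically, because the recurrence coefficients vanish at $l=0,1$. The initial value is $\tilde{\bm h}(0)=\bm f^\beta$, by \eqref{eq:orth_SW}.

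\textbf{Matrix form.} Reading off coefficients, the entry coupling row $l$ to column $l-2$ is $\sqrt{l(l-1)}$. With $j=l-2$ this is $L_{j+2,j}=\sqrt{(j+1)(j+2)}$. The entry coupling row $l$ to column $l+2$ is $-\sqrt{(l+1)(l+2)}=-L_{l+2,l}$. Hence $\mathcal L$ is skew-symmetric, as claimed.

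\textbf{Main obstacle: the top two modes.} The delicate point is the closure $\tilde h_{N+1}=\tilde h_{N+2}\equiv0$. This is exact only at $s=0$, where $f_N^\beta\in V_N^{\beta}$. For $s>0$, $f_N^\beta$ generally has nonzero components along $\swH_{N+1}^{b(s)}$ and $\swH_{N+2}^{b(s)}$. Consequently:
\begin{itemize}
\item The derivation above is exact for $0\leqslant l\leqslant N-2$, and for all $l$ in the infinite system.
\item For $l=N-1,N$, setting those two coefficients to zero is a truncation. It is precisely the ODE-based approximation announced at the start of Sec.~\ref{sec:fast_pro}.
\end{itemize}
In the write-up I would state this explicitly. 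The statement is then read as defining the truncated system \eqref{eq:proj_Lf}, whose solution at $s=1$ approximates $\tilde{\bm f}^{\beta'}$. Skew-symmetry of $\mathcal L$ gives the useful by-product $\frac{\dd}{\dd s}\|\tilde{\bm h}\|^2=0$ for the truncated flow.
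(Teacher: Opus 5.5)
Your derivation is correct and is essentially the paper's proof in App.~\ref{sec:appC}: the paper first derives $\frac{\dd}{\dd\beta}\swH_k^\beta=\frac{1}{2\beta}\bigl(\sqrt{k(k-1)}\,\swH_{k-2}^\beta-\sqrt{(k+1)(k+2)}\,\swH_{k+2}^\beta\bigr)$ from the same two recurrences, then applies $\dd b/\dd s=b\ln(\beta'/\beta)$, which is the same algebra as your $\tfrac12\psi_l+x\psi_l'$ computation. Your caveat about the top two modes is also right, and the paper's proof does not address it: for $s>0$ the integrals $\int_{\RR}\swH_{N+1}^{b(s)}f_N^\beta\,\dd v$ and $\int_{\RR}\swH_{N+2}^{b(s)}f_N^\beta\,\dd v$ are generally nonzero, so the equations for $l=N-1,N$, and hence $\tilde{\bm h}(s)=\ee^{s\mu\mathcal{L}}\tilde{\bm h}(0)$, hold exactly only for the truncated, norm-preserving system that serves as the paper's ODE-based approximation of $\tilde{\bm f}^{\beta'}$.
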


With this formulation, the $L^2$ projection is changed into an initial value problem, and the objective is to compute $\tilde{\bm h}(1)$. Here, the coefficient matrix $\mu\mathcal{L}$ is independent of $s$, and the system admits an analytic solution
\begin{equation}
\label{eq:sol_ODE}
    \tilde{\bm h}(s) = \ee^{s\mu \mathcal{L}}\tilde{\bm h}(0),\qquad s\in[0,1].
\end{equation}
Various approaches can be utilized to approximate $\ee^{\mu\mathcal{L}}\tilde{\bm h}(0)$ with $O(N)$ complexity \cite{Moler1978}. In this work, a simple and efficient implementation is obtained by directly integrating \eqref{eq:proj_Lf} using a fourth-order Runge-Kutta (RK4) method with step size $\Delta s=1/N_b$. Since $\tilde{h}_l$ only couples with $\tilde{h}_{l-2}$ and $\tilde{h}_{l+2}$ as in \eqref{eq:proj_Lf}, each time step requires $O(N)$ operations, resulting in an overall computational complexity of $O(N N_b)$. To determine $N_b$, noting that 
\begin{equation}\label{eq:mu}
    \mu=\frac{\ln(\beta'/\beta)}{2}=\pm\frac{\ln q_0}{2},
\end{equation}
and the parameter $q_0$ is chosen close to $1$ in the scaling adaptive algorithm, the coefficient $\mu$ is small, and hence the entries of the matrix $\mu\mathcal{L}$ are uniformly small. Consequently, a high-accuracy approximation of $\tilde{\bm{f}}^{\beta'}$ can be achieved with a relatively small number of time steps $N_b$. In the numerical experiments presented in this work, $N_b=10$ is sufficient, as no discernible improvement is observed when a larger $N_b$ is adopted.

Moreover, unlike the AW formulation, for which the RK4 discretization is unconditionally stable \cite{Cai2010a}, the stability condition of the RK4 discretization of \eqref{eq:proj_Lf} requires further consideration. To this end, we examine the spectrum of the matrix $\mathcal{L}$. Since $\mathcal{L}$ is skew-symmetric, all its eigenvalues are purely imaginary. Prop.~\ref{prop:rhoL} provides an upper bound for its spectral radius.
\begin{proposition}\label{prop:rhoL}
    The spectral radius of $\mathcal{L}$ satisfies
    \begin{equation}
        \rho(\mathcal{L}) \leqslant 2N.
    \end{equation}
\end{proposition}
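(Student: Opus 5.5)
The plan is to bound the spectral radius by an induced matrix norm and then estimate that norm row by row. Since $\rho(\mathcal{L})\leqslant\|\mathcal{L}\|$ for any induced norm, I will use the $\infty$-norm (maximum absolute row sum). Because $\mathcal{L}$ is skew-symmetric, its $1$-norm and $\infty$-norm coincide, so choosing rows or columns makes no difference. Alternatively, Gershgorin's theorem gives the same bound directly: the diagonal of $\mathcal{L}$ is zero, so every eigenvalue lies in a disc centered at $0$ whose radius is an off-diagonal absolute row sum.

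\textbf{Row sums.} By Prop.~\ref{prop:proj_ODE}, row $l$ (indexing $0\leqslant l\leqslant N$) has at most two nonzero entries. These are $\sqrt{l(l-1)}$ in column $l-2$, present when $l\geqslant2$, and $-\sqrt{(l+1)(l+2)}$ in column $l+2$, present when $l\leqslant N-2$. Hence the absolute row sum is
\begin{equation*}
r_l=\sqrt{l(l-1)}\,\mathbf{1}_{l\geqslant2}+\sqrt{(l+1)(l+2)}\,\mathbf{1}_{l\leqslant N-2}.
\end{equation*}

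\textbf{Case split.}
\begin{itemize}
\item For $2\leqslant l\leqslant N-2$, use $\sqrt{l(l-1)}\leqslant l-\tfrac12$ and $\sqrt{(l+1)(l+2)}\leqslant l+\tfrac32$, both by AM–GM. This gives $r_l\leqslant 2l+1\leqslant 2N-3$.
\item For $l\in\{N-1,N\}$ only the first term survives, so $r_l\leqslant\sqrt{N(N-1)}\leqslant N$.
\item For $l\in\{0,1\}$ only the second term survives (assuming $N\geqslant 2$), so $r_l\leqslant\sqrt{6}\leqslant 2N$.
\end{itemize}

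In every case $r_l\leqslant 2N$, and therefore $\rho(\mathcal{L})\leqslant\|\mathcal{L}\|_\infty\leqslant 2N$.

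The only point requiring care is the boundary indexing: the truncation $\tilde h_{N+1}=\tilde h_{N+2}=0$ removes the upper term in the last two rows, and rows $0,1$ lack the lower term. The degenerate cases $N=0,1$ need a separate check. There $\mathcal{L}=0$, so $\rho(\mathcal{L})=0\leqslant 2N$ trivially.
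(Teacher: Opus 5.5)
Your proof is correct and takes the same route as the paper, which applies the Gershgorin circle theorem to the zero-diagonal matrix $\mathcal{L}$ and bounds the (at most two) off-diagonal entries in each row by $N$ to obtain $|\lambda|\leqslant 2N$. Your version is more careful than the paper's: it treats the boundary rows $l\in\{0,1,N-1,N\}$ and the degenerate cases $N\leqslant 1$ explicitly, and it gets the sharper interior bound $2N-3$ via AM--GM.
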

\begin{proof}
    Applying the Gershgorin circle theorem (see Prop. 5.12 in \cite{Serre2010}) to the matrix $\mathcal{L}$, each eigenvalue $\lambda_j$ of $\mathcal{L}$ lies within at least one Gershgorin disc centered at zero with radius given by the sum of the absolute values of the off-diagonal entries in the corresponding row. From the structure of $\mathcal{L}$, each row contains at most two nonzero entries, whose magnitudes are bounded by $\sqrt{(i+1)(i+2)}\leqslant N$. Therefore,
    \begin{equation*}
        |\lambda_j| \leqslant \max_{i=0,\dots,N-2}\sqrt{i(i-1)}+\sqrt{(i+1)(i+2)} \leqslant 2N, \qquad \forall j,
    \end{equation*}
    which implies $\rho(\mathcal{L})\leqslant 2N$.
\end{proof}

The classical fourth-order Runge-Kutta (RK4) method applied to \eqref{eq:proj_Lf} can be written as
\begin{equation}
    \tilde{\bm h}^{n+1} =\left[I_{N+1} + \Delta s\,\mu \mathcal{L} + \frac{1}{2}(\Delta s\,\mu \mathcal{L})^2 + \frac{1}{6}(\Delta s\,\mu \mathcal{L})^3 + \frac{1}{24}(\Delta s\,\mu \mathcal{L})^4\right] \tilde{\bm h}^n \eqqcolon  R(\Delta s\,\mu \mathcal{L}) \tilde{\bm h}^n.
\end{equation}
For $L^2$ stability, it is required that
\begin{equation}
    \rho\bigl(R(\Delta s\,\mu \mathcal{L})\bigr)\leqslant 1.
\end{equation}
Since $\mathcal{L}$ has purely imaginary eigenvalues $\lambda_j=\ii\omega_j$, the stability condition of the RK4 method along the imaginary axis reads \cite{Butcher2016}
\begin{equation}
    \Delta s\,|\mu \omega_j|\leqslant2\sqrt{2},\qquad \forall j.
\end{equation}
With \eqref{eq:mu} and Prop.~\ref{prop:rhoL}, this condition is satisfied provided that
\begin{equation}\label{eq:stab}
    \Delta s \leqslant \frac{2\sqrt{2}}{N|\ln q_0|}.
\end{equation}
This indicates that for a fixed step size $\Delta s$, the parameter $q_0$ should be chosen sufficiently close to $1$ when the expansion order $N$ is large. 
\begin{remark}
For the 1D1V numerical experiments in this work, the parameters are set as $\Delta s = 0.1, q_0 = 0.999$, 
which satisfy the stability condition \eqref{eq:stab} for all $N\leqslant \frac{20\sqrt{2}}{|\ln 0.999|}\approx28270$.
\end{remark}

Combining the ODE-based approximation with the solution to the constrained problem \eqref{eq:opt_problem} for each spatial index $j=0,...,N_x-1$, a fast and conservative implementation of the projection operator $\mathcal{T}^{\beta\ra\beta'}$ is obtained. The complete procedure is summarized in Alg.~\ref{alg:proj}.

\begin{algorithm}
\caption{Implementation of the fast conservative projection.}
\label{alg:proj}
\begin{algorithmic}[1]
    \Require Local distribution coefficients $\bm{f}^{\beta}$, current scaling factor $\beta$, new scaling factor $\beta'$;
    \Ensure Local distribution coefficients after projection $\bm{f}^{\beta'}$;
    \Para Steps of the ODE approximation $N_b = 10$.
    \State \label{sch:12}Solve the ODE \eqref{eq:proj_Lf} using an RK4 method with $\Delta s=1/N_b$ to $s=1$, which gives the approximated $L^2$ projection $\tilde{\bm{f}}^{\beta'}$;
    \State \label{sch:13}Compute $\bm{h}_0$ and $\mathcal{P}\tilde{\bm{f}}^{\beta'}$ by \eqref{eq:proj_h0};
    \If{$\|\bm{h}_0\|\leqslant\|\bm{f}^\beta\|$ \AND $\|\mathcal{P}\tilde{\bm{f}}^{\beta'}\|\geqslant10^{-14}$}
    \State $\bm{f}^{\beta'}=\bm{h}_0+\alpha \mathcal{P}\tilde{\bm{f}}^{\beta'}$ with $\alpha$ given by \eqref{eq:opt_h4};
    \ElsIf{$\|\bm{h}_0\|\leqslant\|\bm{f}^\beta\|$ \AND $\|\mathcal{P}\tilde{\bm{f}}^{\beta'}\|<10^{-14}$}
    \State $\bm{f}^{\beta'}=\bm{h}_0+\alpha \bm{u}_0$ with $\bm{u}_0$ and $\alpha$ given by \eqref{eq:alpha_4r};
    \Else 
    \State $\bm{f}^{\beta'}=\bm{h}_0+\mathcal{P}\tilde{\bm{f}}^{\beta'}$;
    \EndIf
    
\end{algorithmic}
\end{algorithm}

The computational complexity of the ODE approximation in Step~\ref{sch:12} is $O(N N_b)$, while the matrix–vector multiplication in Step~\ref{sch:13} can be evaluated within $O(N)$ operations. Consequently, the overall projection procedure scales as $O(N N_b N_x)$. Since $N_b$ is a small fixed constant, the projection maintains linear complexity with respect to the Hermite order $N$. An outline of the complete adaptive scheme is provided in App.~\ref{sec:appA}.


\section{Numerical results}
\label{sec:num}
In this section, a series of numerical experiments is studied to validate the accuracy, efficiency, and conservation of the proposed adaptive Hermite spectral method. The test cases include 1D1V and 2D2V problems, which demonstrate the effectiveness of the adaptive algorithm for Vlasov simulation.

All simulations are performed on a single Intel Xeon Platinum 8358 processor. For 1D1V problems, a single-thread implementation is utilized, while for 2D2V problems, OpenMP parallelization is employed with 64 threads to accelerate the computation. The time step is determined according to the CFL condition specified in \cite{Shao2025}.

\subsection{1D1V examples}
We first consider several classical 1D1V benchmark problems, including linear Landau damping, nonlinear Landau damping, two-stream instability, and bump-on-tail instability. These problems exhibit various phase-space structures. Thus, the ability of the proposed scaling adaptive method to capture the filamentation will be examined. 

\subsubsection{Linear Landau damping}\label{sec:Eg1-1}
The performance of the scaling adaptive scheme is first investigated for the linear Landau damping problem with the initial condition
\begin{equation}\label{eq:exp1-Landau}
    f_0(x,v)=\frac{1+\alpha\cos(kx)}{\sqrt{2\pi}}\exp\left(-\frac{v^2}{2}\right),
\end{equation}
where the perturbation amplitude $\alpha=0.01$, the wave number $k=0.5$, and $x\in[0,4\pi]$. In this test, the spatial expansion order of the Fourier method is set to $N_x=32$, and the CFL number is $\CFL = 0.8$. The parameters utilized in the scaling adaptive method are listed in Tab.~\ref{tab:1-1-1}.

\begin{table}
    \caption{Parameters utilized in the scaling adaptive algorithm.}
    \label{tab:1-1-1}
    \centering
    \def\arraystretch{1.3}
    {\footnotesize
    \begin{tabular}{c|cccccc}
    parameter & $\beta_{\min}$ & $\beta_{\max}$ & $q_0$ & $\eta_{l}^{(s)}$ & $\eta_{h}^{(s)}$ & $\mathcal{F}_{0}$\\ \hline
    usage & \eqref{eq:set_S} & \eqref{eq:set_S} & \eqref{eq:set_S} & \eqref{eq:scale_thres} & \eqref{eq:scale_thres} & \eqref{eq:scale_ind_range}\\
    value & 0.1 & 30 & 0.999 & 0.9992 & 1.0008 & $10^{-13}$
    \end{tabular}
    }
\end{table}

\begin{figure}
    \centering
    \begin{subfigure}[b]{0.4\linewidth}
		\includegraphics[width=\textwidth]{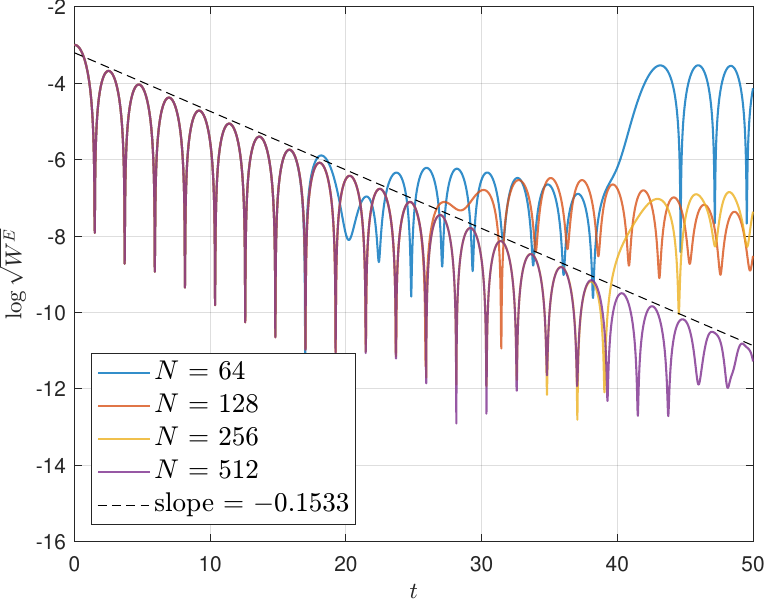}
		\caption{$W^E$, NA}
		\label{fig:1-1-1a}
	\end{subfigure}
	\qquad
	\begin{subfigure}[b]{0.4\linewidth}
		\includegraphics[width=\textwidth]{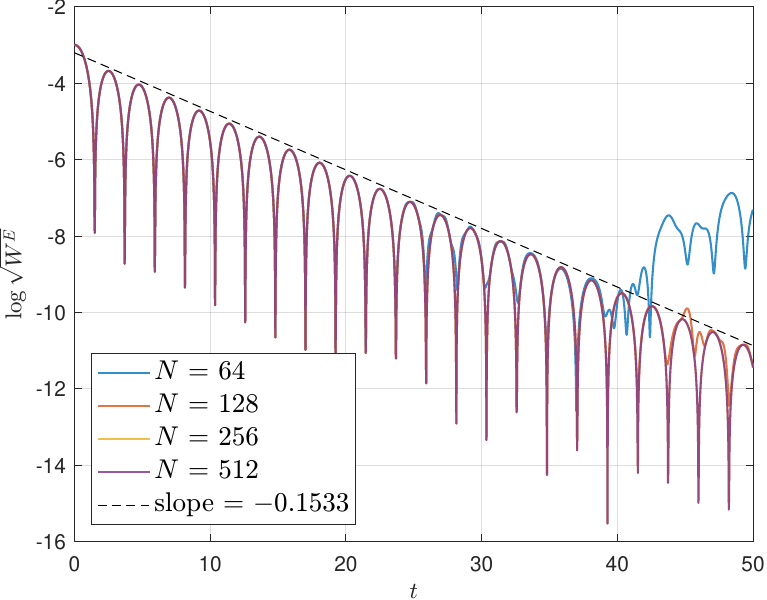}
		\caption{$W^E$, SA}
		\label{fig:1-1-1b}
	\end{subfigure}

	\caption{(1D1V linear Landau damping in Sec.~\ref{sec:Eg1-1}) Comparison between the scaling adaptive and non-adaptive methods with different expansion orders $N$. (a) Evolution of the potential energy $W^E$ by the non-adaptive method (NA). (b) Evolution of the potential energy $W^E$ by the scaling adaptive method (SA). }
	\label{fig:1-1-1}
\end{figure}

According to the dispersion equation, for this linear Landau problem, the electric energy is decaying with a fixed damping rate \cite{Canosa1973} as \eqref{eq:damping_rate}. Moreover, as discussed in \cite{Schumer1998}, the electric energy exhibits a recurrence phenomenon due to the fixed resolution of the Hermite collocation points. The recurrence time $t_{\mathrm{recur}}$ is proportional to the square root of the Hermite expansion order $N$ as in \eqref{eq:damping_rate}.
\begin{equation}
    \label{eq:damping_rate}
    S\left(\log\sqrt{W^E}\right) \approx -0.1533, \qquad t_{\mathrm{recur}}\sim\frac{\pi\sqrt{N}\beta}{k}.
\end{equation}

First, the numerical results for the non-adaptive and scaling adaptive method are shown in Fig.~\ref{fig:1-1-1}, where the expansion numbers are chosen as $N = 64, 128, 256$, and $512$. For the non-adaptive method, the scaling factor is fixed as $\beta = 1$, while the scaling adaptive method is expected to increase the scaling factor $\beta$ with a fixed expansion order $N$ as shown in Fig.~\ref{fig:1-1-2a}, thereby enhancing resolution and delaying recurrence. The results in Fig. \ref{fig:1-1-1} demonstrate that $\beta$ increases gradually over time, leading to a substantial delay of recurrence. For instance, when $N=64$, the scaling adaptive method maintains accurate potential energy up to approximately $t\approx35$, whereas the non-adaptive method exhibits recurrence at approximately $t\approx18$ for $N=64$ and $t\approx26$ for $N=128$. This demonstrates that the scaling adaptive algorithm achieves higher effective resolution without increasing the expansion order.

\begin{figure}
    \centering
	\begin{subfigure}[b]{0.4\linewidth}
		\includegraphics[width=\textwidth]{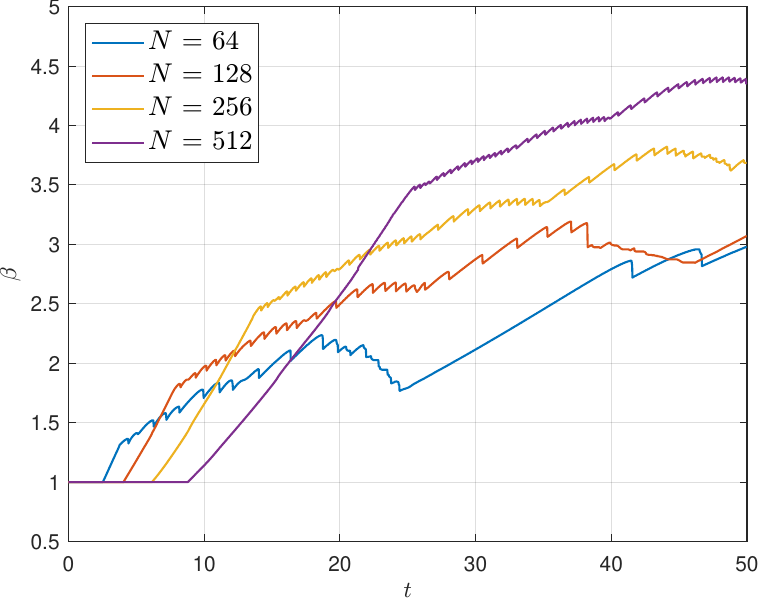}
		\caption{Scaling factor $\beta$}
		\label{fig:1-1-2a}
	\end{subfigure}
    \qquad
    \begin{subfigure}[b]{0.41\linewidth}
		\includegraphics[width=\textwidth]{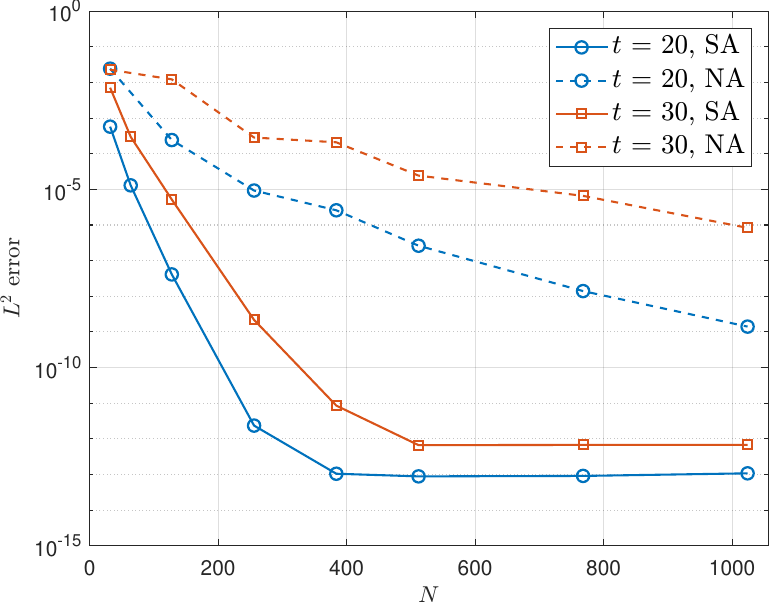}
		\caption{$L^2$ error of $f$}
		\label{fig:1-1-2b}
	\end{subfigure}
	
	\caption{(1D1V linear Landau damping in Sec.~\ref{sec:Eg1-1}) (a) Evolution of the scaling factor $\beta$. (b) $L^2$ error of the scaling adaptive and non-adaptive methods with different expansion orders $N$ at $t=20$ and $30$.}
	\label{fig:1-1-2}
\end{figure}

	

To further assess the performance of the two methods, the variation of the $L^2$ errors of $f$ with respect to the expansion order $N$ at $t = 20$ and $30$ is shown in Fig.~\ref{fig:1-1-2}. The reference solution is computed by the non-adaptive method with $N=5120$. As the distribution function develops increasingly fine structures, the advantage of adaptivity becomes more evident. Specifically, the scaling adaptive method with $N=256$ attains a smaller error than the non-adaptive method with $N=1024$. The results indicate that the scaling adaptive method achieves significantly lower errors and exhibits a faster convergence rate, consistent with the observations reported in \cite{Wang2025, Hu2026}. 

The average computational time per time step for the scaling adaptive and non-adaptive methods is listed in Tab.~\ref{tab:1-1-2}. As discussed in Sec.~\ref{sec:3}, the complexity of the scaling projection introduced by the scaling adaptive method is of the same order as that of the Runge-Kutta time integration. Thus, the total CPU time of the scaling adaptive method increases approximately linearly with respect to $N$. In practice, the adaptive adjustment accounts for about one-fourth to one-third of the total computational cost. Considering the significant gain in accuracy relative to the additional computational overhead, these results demonstrate the effectiveness and efficiency of the scaling adaptive method.

\begin{table}[hptb]
    \caption{(1D1V linear Landau damping in Sec.~\ref{sec:Eg1-1}) Average CPU time per time step (in seconds). Here, $T_{\rm non}$ and $T_{\rm adap}$ refer to the CPU time of the non-adaptive and scaling adaptive methods, respectively. $T_{\rm ind}$ refers to the CPU time spent on the adaptive adjustment step, which is a component of $T_{\rm adap}$.}
    \label{tab:1-1-2}
    \centering
    \def\arraystretch{1.3}
    {\footnotesize
    \begin{tabular}{l|cccc}
     & $T_{\rm non} $ & $T_{\rm adap}$ & $T_{\rm ind}$ & $T_{\rm ind}/T_{\rm adap}$ \\ \hline
    $N=64$ & 4.21E$-$04 & 6.61E$-$04 & 2.29E$-$04 & 34.62\% \\
    $N=128$ & 8.48E$-$04 & 1.20E$-$03 & 3.43E$-$04 & 28.53\% \\
    $N=256$ & 1.60E$-$03 & 2.28E$-$03 & 6.41E$-$04 & 28.11\% \\
    $N=512$ & 3.22E$-$03 & 4.41E$-$03 & 1.14E$-$03 & 25.92\%
    \end{tabular}
    }
\end{table}

\begin{figure}
	\centering
	\begin{subfigure}[b]{0.32\linewidth}
		\includegraphics[width=\textwidth]{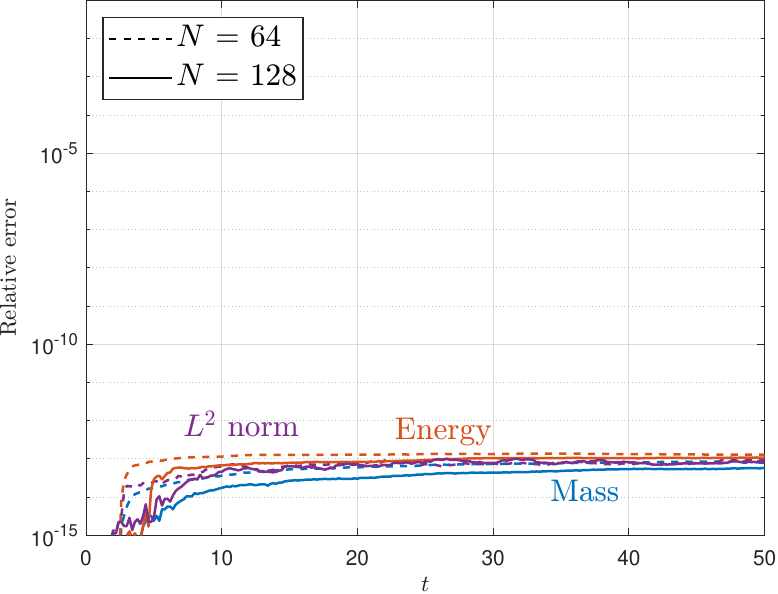}
		\caption{Conservative}
		\label{fig:1-1-3a}
	\end{subfigure}
	\hfill
	\begin{subfigure}[b]{0.32\linewidth}
		\includegraphics[width=\textwidth]{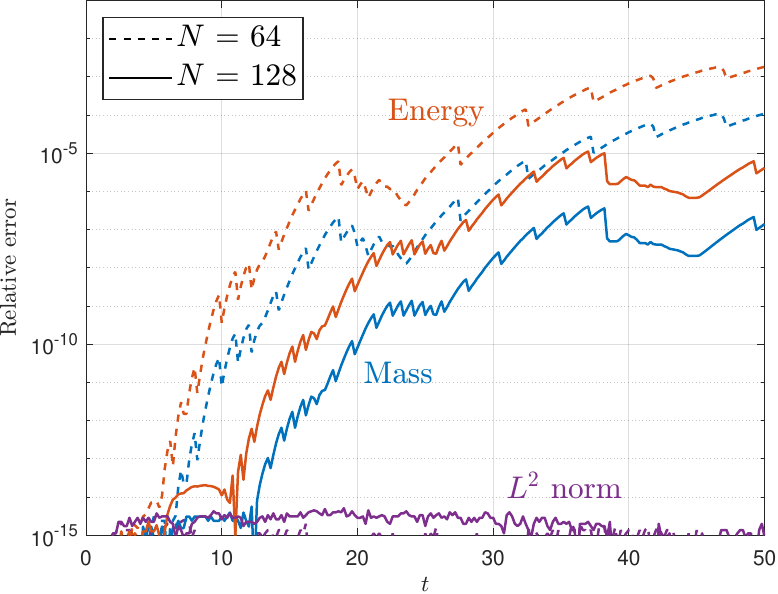}
		\caption{First non-conservative method}
		\label{fig:1-1-3b}
	\end{subfigure}
	\hfill
	\begin{subfigure}[b]{0.32\linewidth}
		\includegraphics[width=\textwidth]{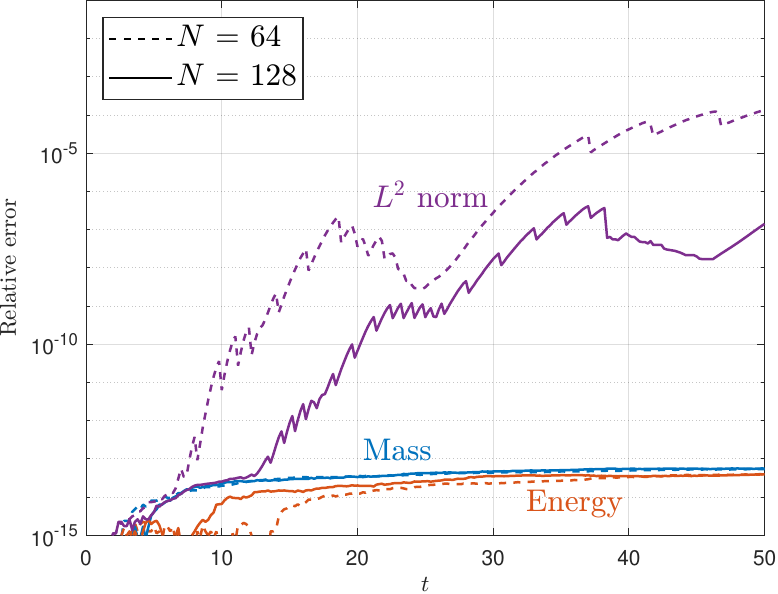}
		\caption{Second non-conservative method}
		\label{fig:1-1-3c}
	\end{subfigure}	
	\caption{(1D1V linear Landau damping in Sec.~\ref{sec:Eg1-1})  Evolution of the relative error for the total mass, energy, and $L^2$ norm with $N = 64$, and $N = 128$. Here, the solid lines are the numerical results with $N = 128$, while the dashed lines are those with $N = 64$. The blue lines are the error evolution of the total mass, the red lines are those of the total energy, while the purple lines are those of the $L^2$ norm. (a) Conservative projection utilized in the scaling adaptive method, preserving both moments and the $L^2$ norm. (b) First non-conservative method. (c) Second non-conservative method.}
	\label{fig:1-1-3}
\end{figure}

\paragraph{Comparison between conservative and non-conservative projections}
To examine the conservative properties of the scaling adaptive method, the numerical results obtained with the conservative projection proposed in Sec.~\ref{sec:pro_fast} are compared with those from two other non-conservative numerical schemes. For the first non-conservative scheme, the expansion coefficients $\tilde{\bm{f}}^{\beta'}$ obtained by the direct $L^2$ projection as \eqref{eq:org_pro} are utilized. Without the optimization problem \eqref{eq:opt_problem}, this method can not preserve the exact conservation of the macroscopic variables or the $L^2$ norm. Nevertheless, due to the anti-symmetric structure of the ODE system \eqref{eq:proj_ODE}, the resulting error in $L^2$ norm is typically small, although the error of the macroscopic variables may accumulate over time. For the second non-conservative method, the optimization problem \eqref{eq:opt_problem} with only the linear moment constraint \eqref{eq:opt_cons} is solved, while the quadratic $L^2$ constraint \eqref{eq:opt_L2} is omitted. This method conserves exactly the prescribed macroscopic variables, such as the density and energy, but does not preserve the $L^2$ norm.

The time evolution of the relative errors in total mass, energy, and the $L^2$ norm for the three methods, with $N=64$ and $128$, is shown in Fig.~\ref{fig:1-1-3}. To suppress the error due to time discretization, the time step size is fixed as $\Delta t=0.002$ in those tests. Here, the total momentum is omitted, since the problem is symmetric with respect to $v=0$, and all three methods preserve the total momentum. Fig.~\ref{fig:1-1-3} indicates that for the conservation method, the relative error of the total mass, energy, and $L^2$ remains quite small at the level of $10^{-13}$ all the time, while that of the total mass and energy is growing rapidly for the first non-conservative method, and that of the $L^2$ norm is quickly growing for the second non-conservative method. Moreover, the numerical behavior for $N = 64$ and $N = 128$ is quite similar. These results confirm the conservative properties of the proposed scaling adaptive method.

	



\subsubsection{Nonlinear Landau damping}\label{sec:Eg1-2}
In this section, the nonlinear Landau damping problem is considered, where the initial condition has the same form as \eqref{eq:exp1-Landau} but with $\alpha=0.5$. The spatial Fourier expansion order is set to $N_x=256$, and the CFL number is $\CFL = 0.8$. The parameters of the scaling adaptive method are the same as those in Tab.~\ref{tab:1-1-1}.

\begin{figure}
    \centering
    \begin{subfigure}[b]{0.4\linewidth}
		\includegraphics[width=\textwidth]{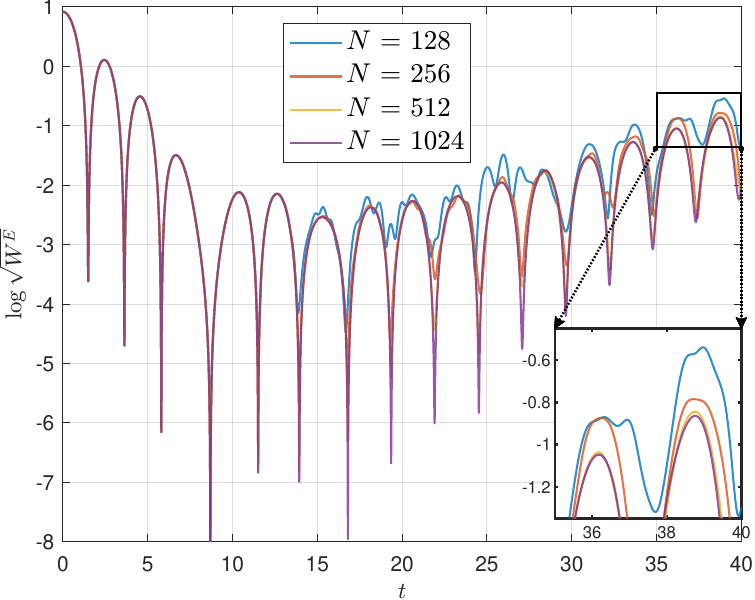}
		\caption{$W^E$, NA}
		\label{fig:1-2-1a}
	\end{subfigure}
    \qquad
	\begin{subfigure}[b]{0.4\linewidth}
		\includegraphics[width=\textwidth]{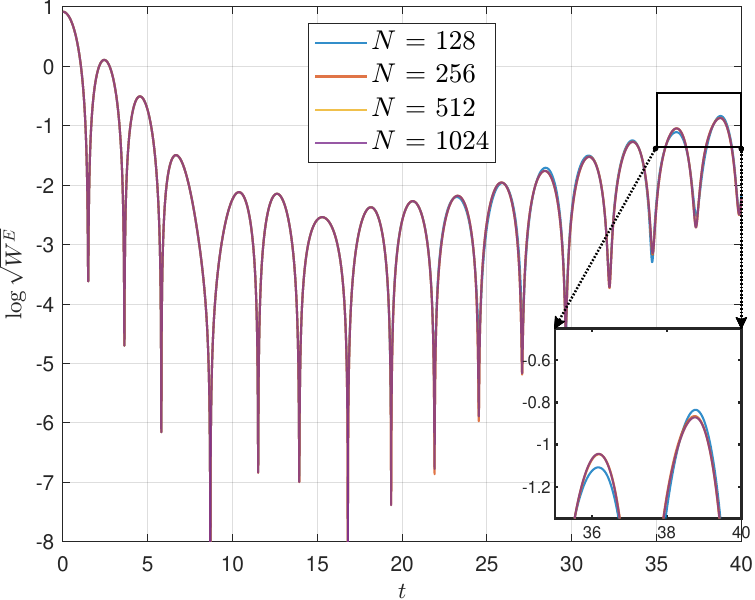}
		\caption{$W^E$, SA}
		\label{fig:1-2-1b}
	\end{subfigure}
	\caption{(1D1V nonlinear Landau damping in Sec.~\ref{sec:Eg1-2}) Comparison between the scaling adaptive and non-adaptive methods with different expansion orders $N$. (a) Evolution of the potential energy $W^E$ by the non-adaptive method. (b) Evolution of the potential energy $W^E$ by the scaling adaptive method.}
	\label{fig:1-2-1}
\end{figure}

The evolution of the electric field obtained by the non-adaptive and scaling adaptive method is plotted in Fig.~\ref{fig:1-2-1a} and \ref{fig:1-2-1b}, respectively. It shows that with low expansion order $N$, the non-adaptive method fails to present an accurate result at the end stage. A mild distinction can be observed even between $N=512$ and $1024$. Whereas for the scaling adaptive method, the result of $N=256$ coincides with that of the higher order. 

For the non-adaptive method, the scaling factor is fixed as $\beta=1$, while the evolution of $\beta$ for the scaling adaptive method is shown in Fig.~\ref{fig:1-2-1c}, which shows a persistent increasing trend over time. This validates the analysis presented in Sec.~\ref{sec:3-scale}, where the proposed scaling adaptive method could capture the filamentation by continuously enlarging $\beta$. Fig.~\ref{fig:1-2-1d} presents the conservation error of the scaling adaptive algorithm with $N=128$. The overall error remains below the order of $10^{-13}$, confirming the conservation properties of the scheme.

To examine the ability to capture the filamentation, the distribution function $f$ obtained by the scaling and non-adaptive method, at $t=20$ and $40$, is presented in Fig.~\ref{fig:1-2-2}. It can be observed that at $t=20$, the non-adaptive method with $N=1024$ is barely able to capture mildly oscillatory structures. However, by $t=40$, it exhibits distinct non-physical oscillations. In contrast, the scaling adaptive method, even with $N=512$, is capable of resolving clear and well-structured filamentation at $t=40$.

The computational time for the non-adaptive and scaling adaptive methods is listed in Tab.~\ref{tab:1-2-1}, where the time spent on obtaining the scaling adaptive adjustment grows approximately linearly with respect to $N$, and its proportion in the total time-stepping cost remains relatively small. Taking both accuracy and computational cost into account, the scaling adaptive method demonstrates superior efficiency compared to the non-adaptive approach.

\begin{figure}
    \centering
	\begin{subfigure}[b]{0.4\linewidth}
		\includegraphics[width=\textwidth]{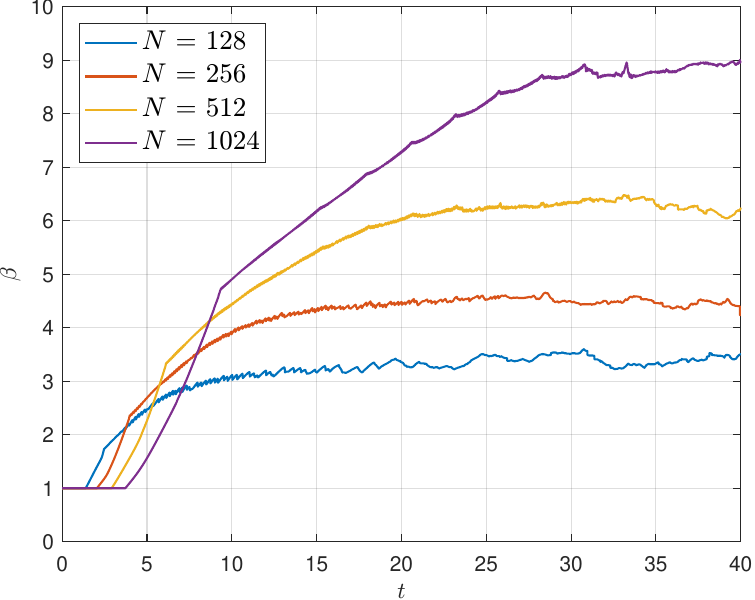}
		\caption{Scaling factor $\beta$}
		\label{fig:1-2-1c}
	\end{subfigure}
    \qquad
    \begin{subfigure}[b]{0.415\linewidth}
		\includegraphics[width=\textwidth]{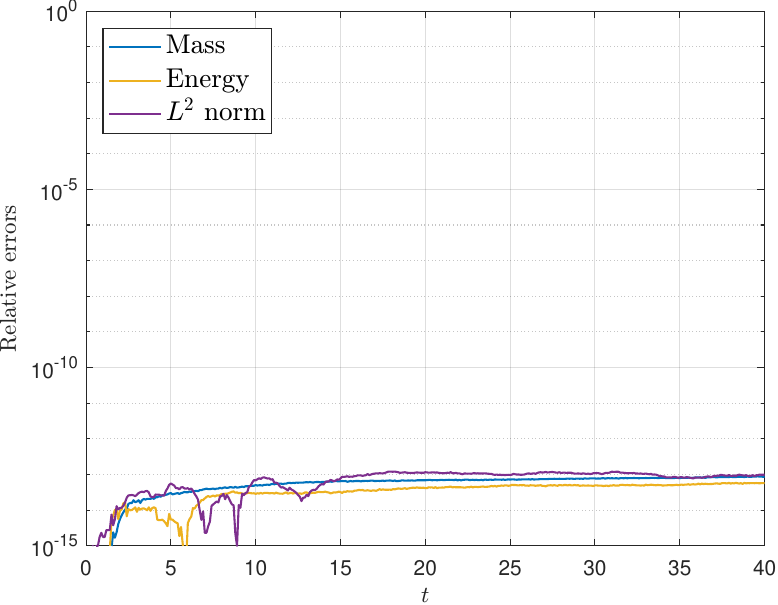}
		\caption{Errors of the conserved variables}
		\label{fig:1-2-1d}
	\end{subfigure}
	
	\caption{(1D1V nonlinear Landau damping in Sec.~\ref{sec:Eg1-2}) Results of the scaling adaptive method with different expansion orders $N$. (a) Evolution of the scaling factor $\beta$. (b) Evolution of the errors in total mass, energy, and the $L^2$ norm, computed by the scaling adaptive method with $N=128$.}
	\label{fig:1-2-3}
\end{figure}

\begin{figure}
    \centering
    \begin{subfigure}[b]{0.32\linewidth}
		\includegraphics[width=\textwidth]{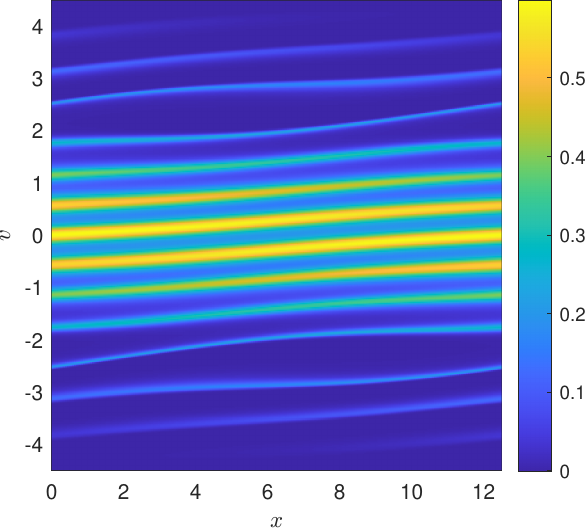}
		\caption{$N=512$, SA, $t=20$}
	\end{subfigure}
    \hfill
    \begin{subfigure}[b]{0.32\linewidth}
		\includegraphics[width=\textwidth]{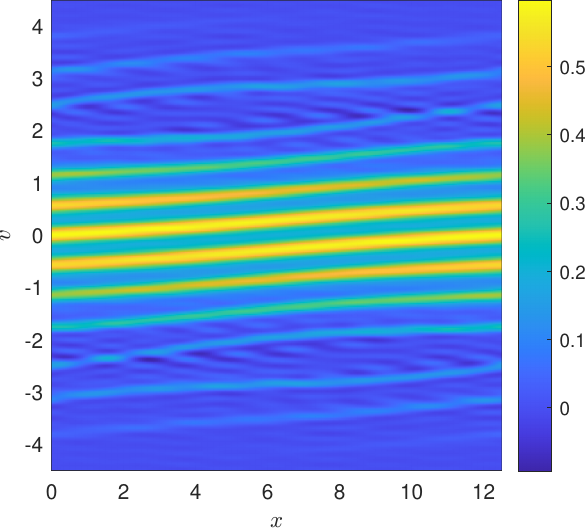}
		\caption{$N=512$, NA, $t=20$}
	\end{subfigure}
    \hfill
    \begin{subfigure}[b]{0.32\linewidth}
		\includegraphics[width=\textwidth]{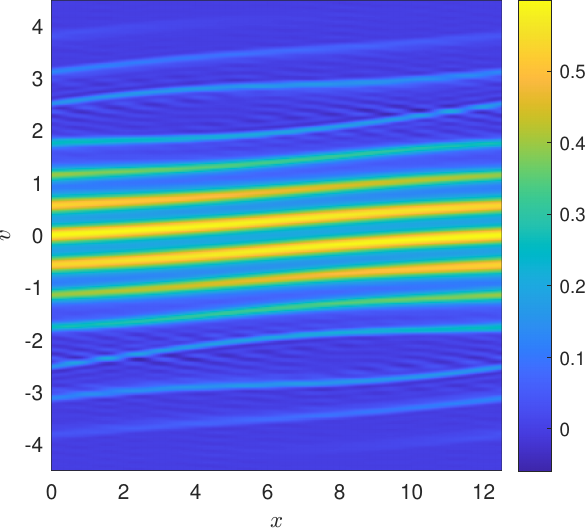}
		\caption{$N=1024$, NA, $t=20$}
	\end{subfigure}
    \\ \medskip
    \begin{subfigure}[b]{0.32\linewidth}
		\includegraphics[width=\textwidth]{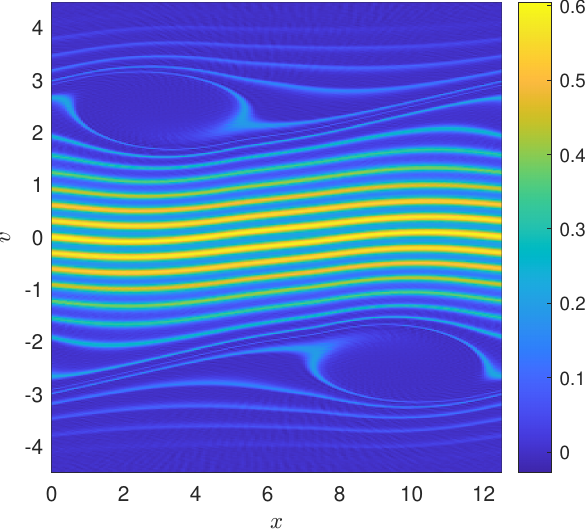}
		\caption{$N=512$, SA, $t=40$}
	\end{subfigure}
	\hfill
    \begin{subfigure}[b]{0.32\linewidth}
		\includegraphics[width=\textwidth]{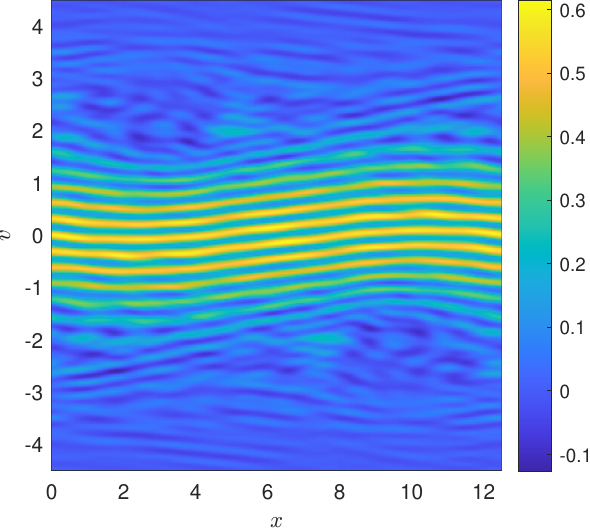}
		\caption{$N=512$, NA, $t=40$}
	\end{subfigure}
    \hfill
	\begin{subfigure}[b]{0.32\linewidth}
		\includegraphics[width=\textwidth]{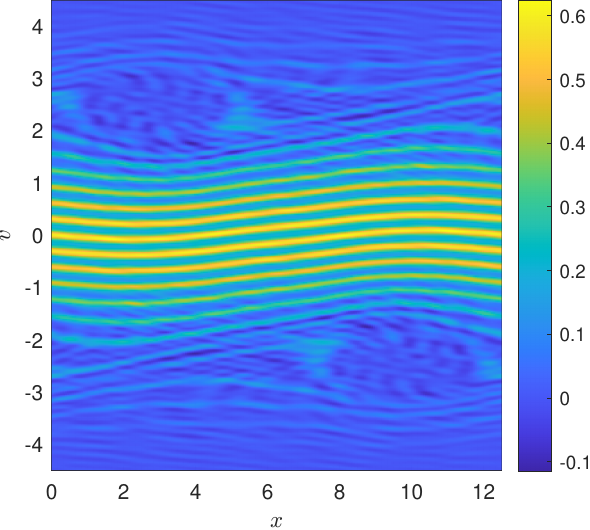}
		\caption{$N=1024$, NA, $t=40$}
	\end{subfigure}	
	\caption{(1D1V nonlinear Landau damping in Sec.~\ref{sec:Eg1-2}) Comparison of the distribution function $f$ between the scaling adaptive and non-adaptive method with different expansion orders. (a--d) $t=20$. (e--h) $t=40$.}
	\label{fig:1-2-2}
\end{figure}

\begin{table}[hptb]
    \caption{(1D1V nonlinear Landau damping in Sec.~\ref{sec:Eg1-2}) Average CPU time per time step (in seconds). Here, $T_{\rm non}$ and $T_{\rm adap}$ refer to the CPU time of the non-adaptive and scaling adaptive methods, respectively. $T_{\rm ind}$ refers to the CPU time spent on the adaptive adjustment step, which is a component of $T_{\rm adap}$.}
    \label{tab:1-2-1}
    \centering
    \def\arraystretch{1.3}
    {\footnotesize
    \begin{tabular}{l|cccc}
     & $T_{\rm non} $ & $T_{\rm adap}$ & $T_{\rm ind}$ & $T_{\rm ind}/T_{\rm adap}$ \\ \hline
    $N=128$ & 8.13E$-$03 & 1.00E$-$02 & 1.85E$-$03 & 18.43\% \\
    $N=256$ & 1.63E$-$02 & 2.04E$-$02 & 3.98E$-$03 & 19.50\% \\
    $N=512$ & 3.34E$-$02 & 4.33E$-$02 & 9.52E$-$03 & 21.98\% \\
    $N=1024$ & 6.30E$-$02 & 8.50E$-$02 & 2.12E$-$02 & 24.92\%
    \end{tabular}
    }
\end{table}

\subsubsection{Two-stream instability}\label{sec:Eg1-3}
In this subsection, the two-stream instability problem with the initial condition \eqref{eq:ini_two_stream} is considered. 
\begin{equation}
\label{eq:ini_two_stream}
    f_0(x,v)=\frac{1+\alpha\cos(kx)}{2\sqrt{2\pi\theta}}\left[\exp\left(-\frac{(v-u)^2}{2\theta}\right)+\exp\left(-\frac{(v+u)^2}{2\theta}\right)\right], \qquad x \in [0, 4\pi],
\end{equation}
where $u=1$, $\theta=0.25$, $\alpha=0.001$, and $k=0.5$. The spatial expansion order is set as $N_x=192$, and the CFL number is $\CFL =0.8$. The parameters for the scaling adaptive method are listed in Tab.~\ref{tab:1-1-1}. For the non-adaptive method, the scaling factor is fixed as $\beta=1/\sqrt{\theta}=2$.


The evolution of the electric field energy obtained by the adaptive and non-adaptive methods is shown in Fig.~\ref{fig:1-3-1a}. In this case, even the non-adaptive method with relatively low expansion order accurately captures the potential energy $W^E$, and the results from all configurations overlap.

The evolution of the scaling factor $\beta$ and the errors of the conserved quantities are presented in Fig.~\ref{fig:1-3-1b} and Fig.~\ref{fig:1-3-1c}, respectively. The scaling factor increases continuously over time. Meanwhile, the relative errors in total mass, energy, and the $L^2$ norm remain below $10^{-13}$, indicating that the scaling adaptive method preserves conservation and maintains $L^2$ stability.

The distribution function $f$ obtained by both methods is compared in Fig.~\ref{fig:1-3-2}. The non-adaptive method with $N=256$ and $512$ exhibits noticeable oscillations near the boundary, which are alleviated when $N$ increases to $1024$. In contrast, the scaling adaptive method resolves the structure clearly with only $N=256$.

Tab.~\ref{tab:1-3-2} reports the average computational time per step of the scaling adaptive and non-adaptive methods. The proportion of time spent on the scaling adjustment remains below $20\%$ in all cases. Consequently, the scaling adaptive method with a lower expansion order $N$ requires significantly less computational time than the non-adaptive method with a higher $N$, while achieving superior accuracy. This demonstrates the efficiency of the proposed adaptive method.

\begin{figure}
	\centering
	\begin{subfigure}[b]{0.32\linewidth}
		\includegraphics[width=\textwidth]{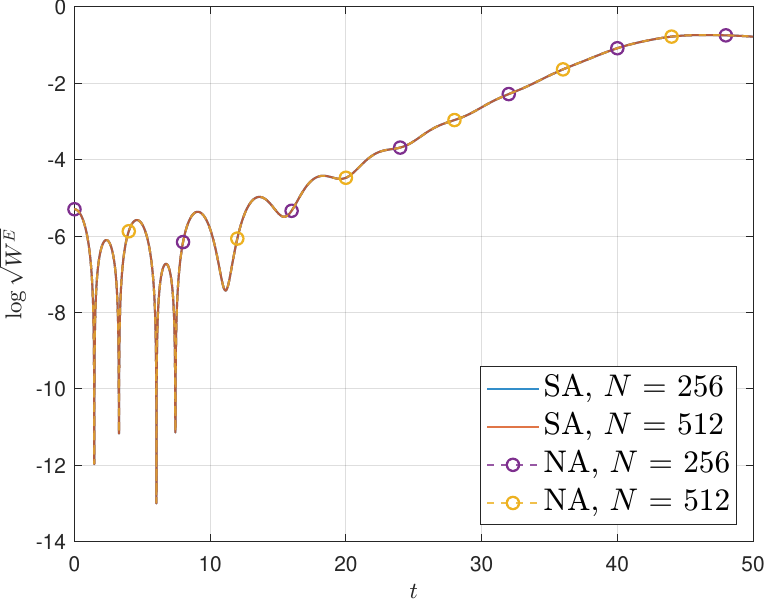}
		\caption{$W^E$}
		\label{fig:1-3-1a}
	\end{subfigure}
	\hfill
	\begin{subfigure}[b]{0.317\linewidth}
		\includegraphics[width=\textwidth]{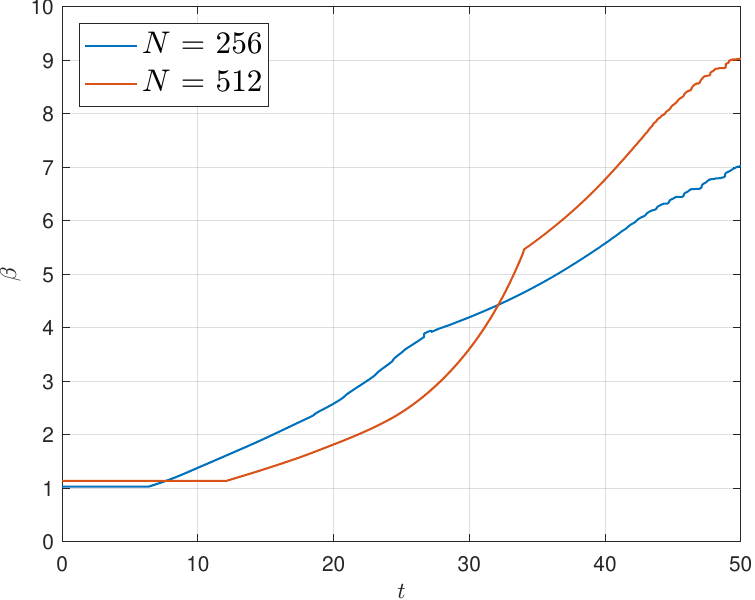}
		\caption{Scaling factor $\beta$}
		\label{fig:1-3-1b}
	\end{subfigure}
    \hfill
    \begin{subfigure}[b]{0.327\linewidth}
		\includegraphics[width=\textwidth]{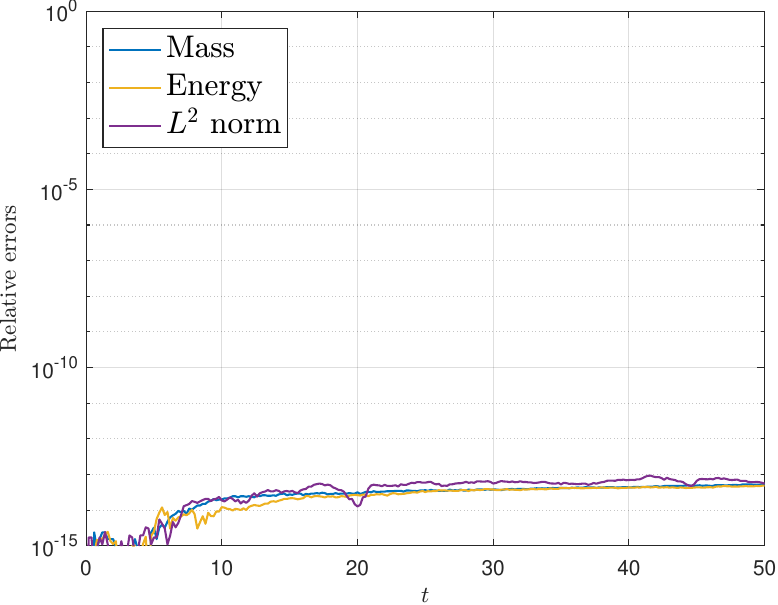}
		\caption{Errors in conserved quantities}
		\label{fig:1-3-1c}
	\end{subfigure}	
	
	\caption{(1D1V two-stream instability in Sec.~\ref{sec:Eg1-3}) Comparison between the scaling adaptive and non-adaptive methods with different expansion orders $N$. (a) Evolution of the potential energy $W^E$ by both methods. (b) Evolution of the scaling factor $\beta$ of the scaling adaptive method. (c) Evolution of the errors in total mass, energy, and the $L^2$ norm, computed by the scaling adaptive method with $N=128$, $\Delta t = 0.001$.}
	\label{fig:1-3-1}
\end{figure}

\begin{figure}
    \centering
    \begin{subfigure}[b]{0.4\linewidth}
		\includegraphics[width=\textwidth]{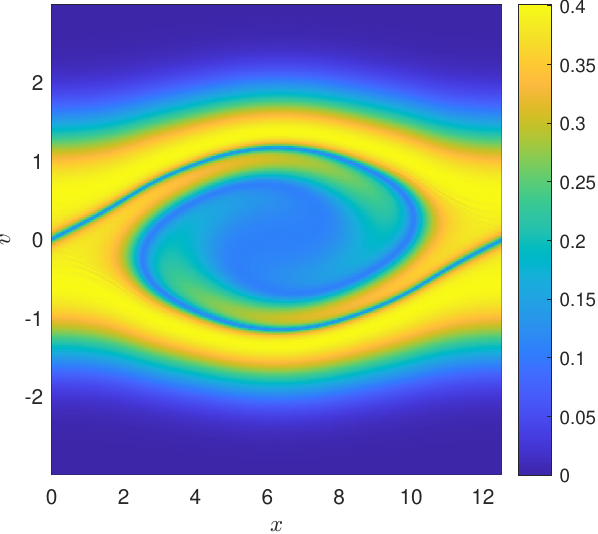}
		\caption{$N=256$, SA}
	\end{subfigure}
	\qquad
    \begin{subfigure}[b]{0.4\linewidth}
		\includegraphics[width=\textwidth]{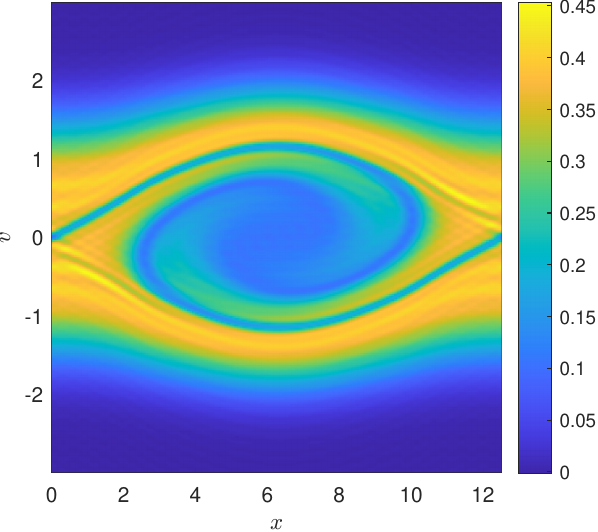}
		\caption{$N=256$, NA}
	\end{subfigure}
	\\ \medskip
    \begin{subfigure}[b]{0.4\linewidth}
		\includegraphics[width=\textwidth]{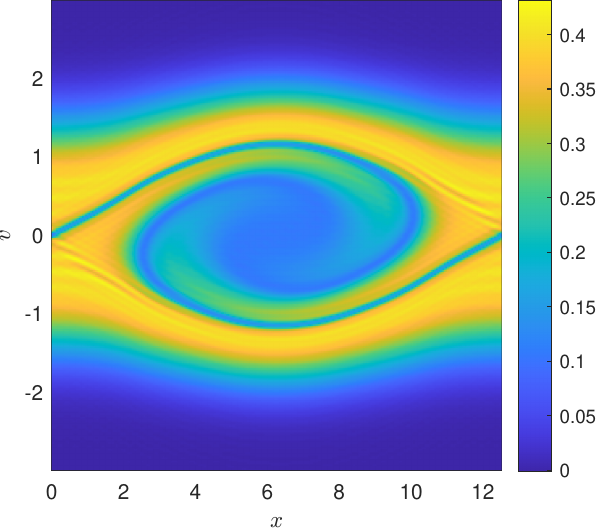}
		\caption{$N=512$, NA}
	\end{subfigure}
    \qquad
    \begin{subfigure}[b]{0.4\linewidth}
		\includegraphics[width=\textwidth]{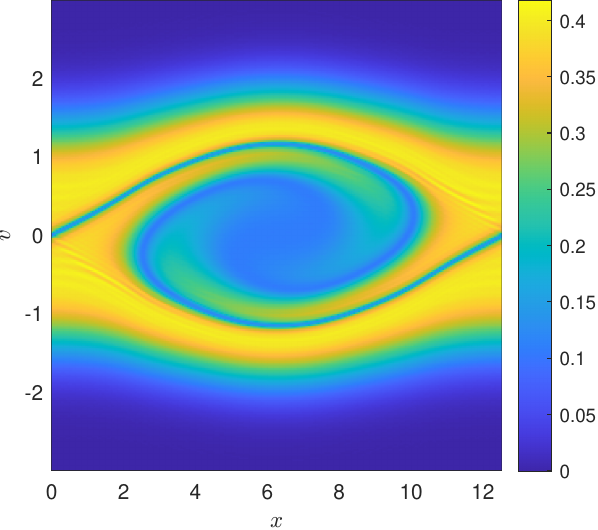}
		\caption{$N=1024$, NA}
	\end{subfigure}	
	\caption{(1D1V two-stream instability in Sec.~\ref{sec:Eg1-3}) Comparison of the distribution functions $f$ between the scaling adaptive and non-adaptive method with different expansion orders $N$, $t=50$. (a) Scaling adaptive method with $N=256$. (b--d) Non-adaptive method.}
	\label{fig:1-3-2}
\end{figure}

\begin{table}[hptb]
    \caption{(1D1V two-stream instability in Sec.~\ref{sec:Eg1-3}) Average CPU time per time step (in seconds). Here, $T_{\rm non}$ and $T_{\rm adap}$ refer to the CPU time of the non-adaptive and scaling adaptive methods, respectively. $T_{\rm ind}$ refers to the CPU time spent on the adaptive adjustment step, which is a component of $T_{\rm adap}$.}
    \label{tab:1-3-2}
    \centering
    \def\arraystretch{1.3}
    {\footnotesize
    \begin{tabular}{l|cccc}
     & $T_{\rm non} $ & $T_{\rm adap}$ & $T_{\rm ind}$ & $T_{\rm ind}/T_{\rm adap}$ \\ \hline
    $N=256$ & 1.18E$-$02 & 1.51E$-$02 & 2.82E$-$03 & 18.67\% \\
    $N=512$ & 2.36E$-$02 & 2.80E$-$02 & 3.99E$-$03 & 14.26\% \\
    $N=1024$ & 4.61E$-$02 & 5.24E$-$02 & 5.68E$-$03 & 10.83\%
    \end{tabular}
    }
\end{table}

\subsubsection{Bump-on-tail instability}\label{sec:Eg1-4}
\begin{figure}
    \centering
    \begin{subfigure}[b]{0.32\linewidth}
		\includegraphics[width=\textwidth]{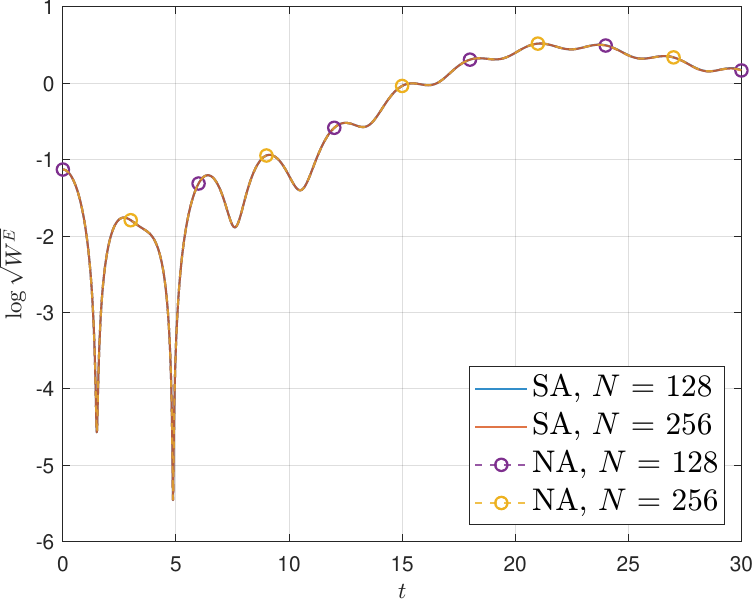}
		\caption{$W^E$}
		\label{fig:1-4-1a}
	\end{subfigure}
	\hfill
	\begin{subfigure}[b]{0.315\linewidth}
		\includegraphics[width=\textwidth]{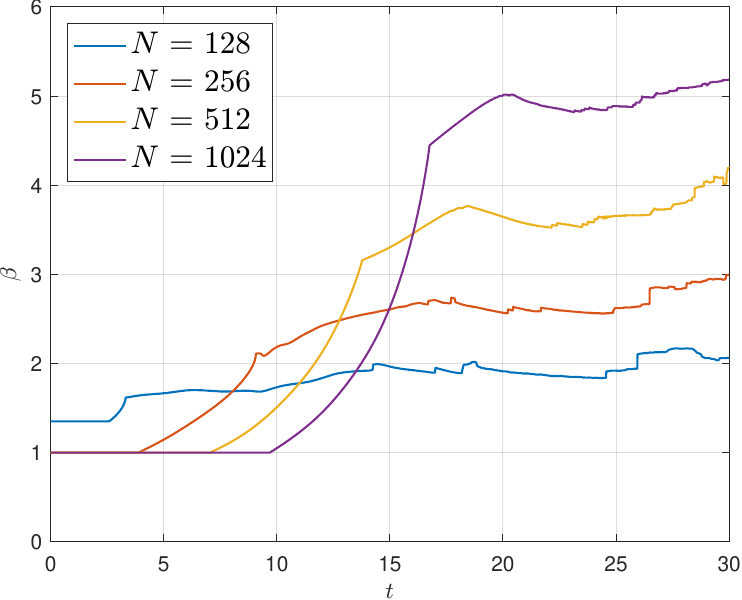}
		\caption{Scaling factor $\beta$}
		\label{fig:1-4-1b}
	\end{subfigure}
    \hfill
	\begin{subfigure}[b]{0.33\linewidth}
		\includegraphics[width=\textwidth]{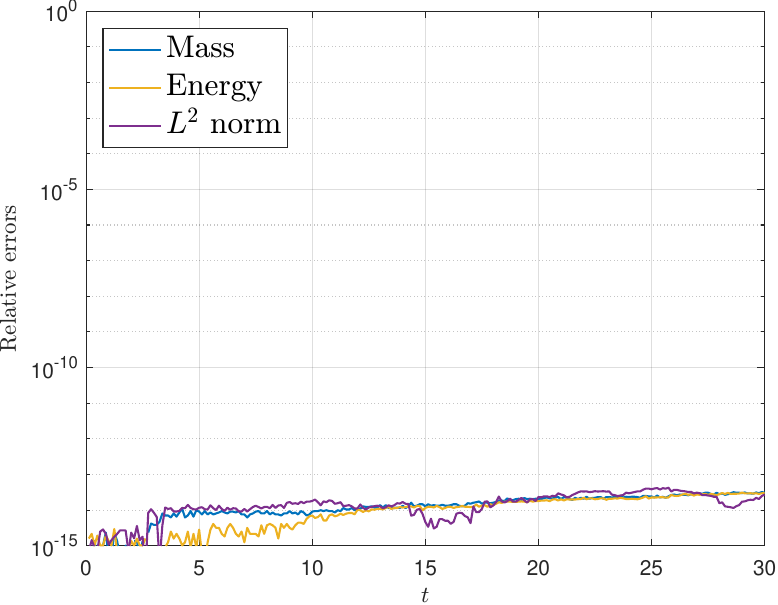}
		\caption{Errors in conserved quantities}
		\label{fig:1-4-1c}
	\end{subfigure}
	
	\caption{(1D1V bump-on-tail instability in Sec.~\ref{sec:Eg1-4}) Comparison between the scaling adaptive and non-adaptive methods with different expansion orders $N$. (a) Evolution of the potential energy $W^E$ by two methods. (b) Evolution of the scaling factor $\beta$. (c) Evolution of the errors in total mass, energy, and the $L^2$ norm, computed by the scaling adaptive method with $N=128$ and $\Delta t=0.00025$.}
	\label{fig:1-4-1}
\end{figure}

In this subsection, the bump-on-tail instability problem with the initial condition \eqref{eq:ini_bump} is considered. 
\begin{equation}
\label{eq:ini_bump}
    f_0(x,v)=\bigl(1+\alpha\cos(kx)\bigr)\left[\frac{\rho_1}{\sqrt{2\pi}}\exp\left(-\frac{v^2}{2}\right)+\frac{\rho_2}{\sqrt{2\pi\theta}}\exp\left(-\frac{(v-u)^2}{2\theta}\right)\right], \qquad x\in[0,20\pi/3],
\end{equation}
where $\rho_1=0.9$, $\rho_2=0.1$, $u=4.5$, $\theta=0.25$. $\alpha=0.03$, and $k=0.3$. The Fourier expansion order is $N_x=256$, and the CFL number is $\CFL = 0.8$. For the non-adaptive method, the scaling factor is fixed at $\beta=1$. The parameters in Tab.~\ref{tab:1-1-1} are utilized for the scaling adaptive method.

The evolution of the potential energy obtained by the scaling and non-adaptive method is shown in Fig.~\ref{fig:1-4-1a} for $N=128$ and $256$, where all the results coincide. The evolution of the scaling factor $\beta$ for $N = 128, 256, 512$, and $1024$ is presented in Fig.~\ref{fig:1-4-1b}, showing an increase over time as expected. The relative error for the mass, total energy, and the $L^2$ norm with $N=128$ is shown in Fig.~\ref{fig:1-4-1c}, where it is at the order of $10^{-14}$, indicating that the total mass, energy, and $L^2$ norm are well preserved. 

The distribution function $f$ obtained by the scaling adaptive and non-adaptive method with $N = 256, 512$, and $1024$ at $t = 30$ is presented in Fig.~\ref{fig:1-4-2}. For the non-adaptive method, the distribution function $f$ exhibits significant oscillations near $(x,v)=(15,3)$, even for $N=1024$. In contrast, the scaling adaptive method resolves a clear and sharp vortex structure for $n = 512$ and $1024$, demonstrating the improved accuracy.

The average computational time for both methods is illustrated in Tab.~\ref{tab:1-4-1}. It shows that the computational cost of the scaling adaptive method increases approximately linearly with the expansion order $N$, while the cost from adaptive adjustment remains small. These results demonstrate the efficiency gains achieved by incorporating scaling adaptivity compared with the non-adaptive approach.

\begin{figure}
    \centering
    \begin{subfigure}[b]{0.32\linewidth}
		\includegraphics[width=\textwidth]{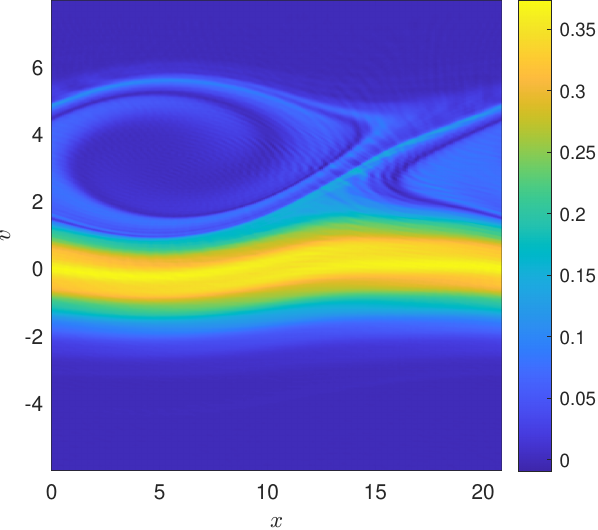}
		\caption{$N=256$, SA}
	\end{subfigure}
	\hfill
    \begin{subfigure}[b]{0.32\linewidth}
		\includegraphics[width=\textwidth]{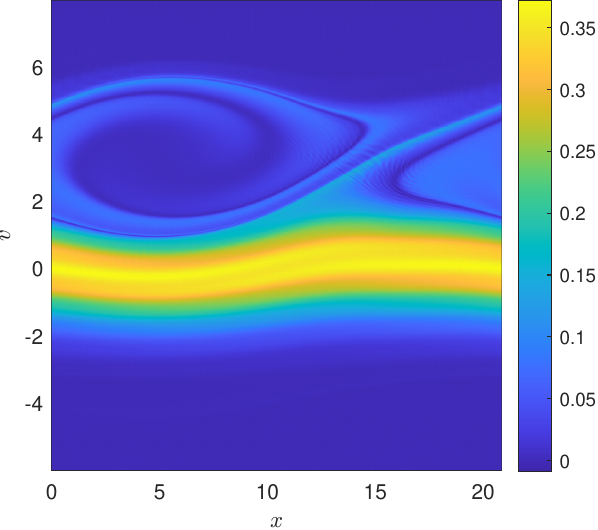}
		\caption{$N=512$, SA}
	\end{subfigure}
    \hfill
    \begin{subfigure}[b]{0.32\linewidth}
		\includegraphics[width=\textwidth]{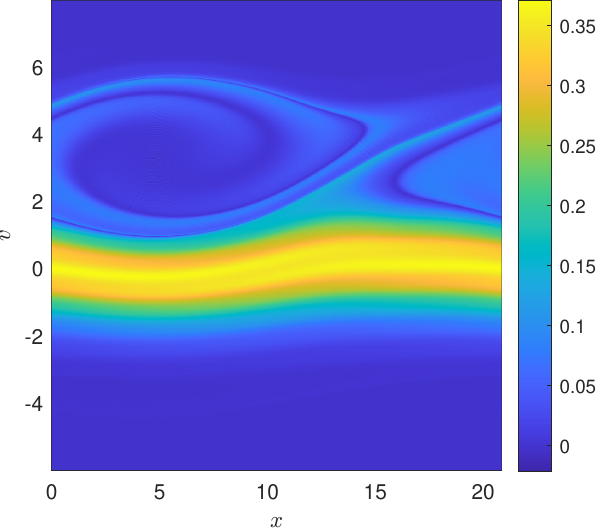}
		\caption{$N=1024$, SA}
	\end{subfigure}
	\\ \medskip
    \begin{subfigure}[b]{0.32\linewidth}
		\includegraphics[width=\textwidth]{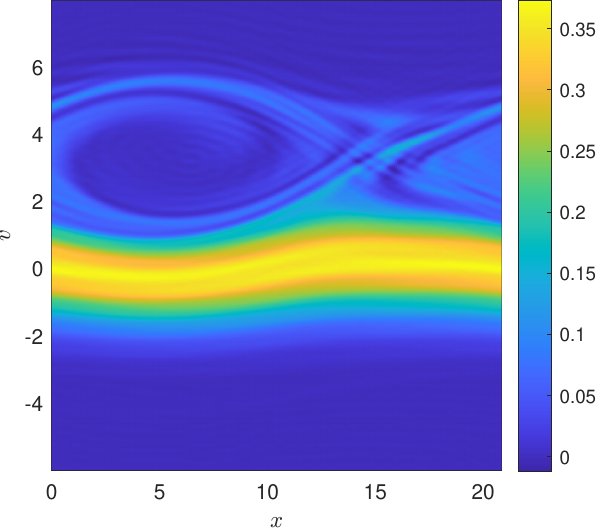}
		\caption{$N=256$, NA}
	\end{subfigure}
	\hfill
    \begin{subfigure}[b]{0.32\linewidth}
		\includegraphics[width=\textwidth]{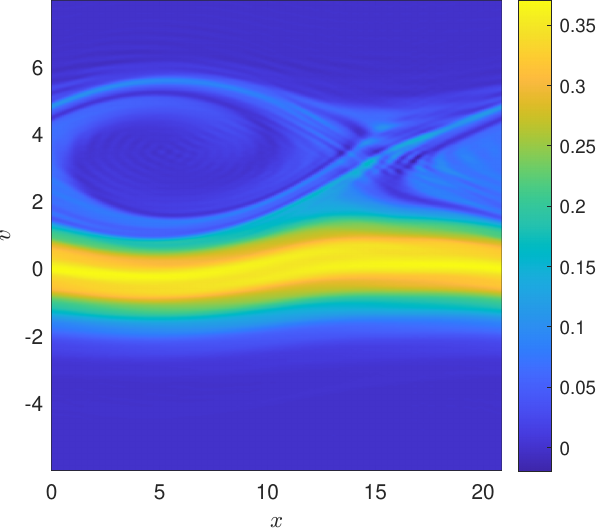}
		\caption{$N=512$, NA}
	\end{subfigure}
    \hfill
    \begin{subfigure}[b]{0.32\linewidth}
		\includegraphics[width=\textwidth]{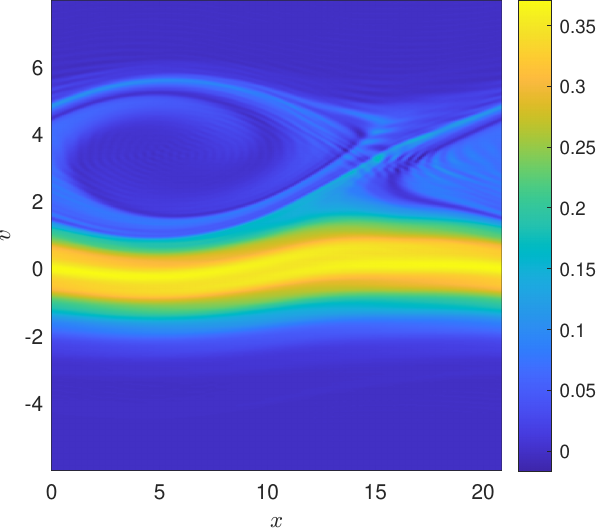}
		\caption{$N=1024$, NA}
	\end{subfigure}
	
	\caption{(1D1V bump-on-tail instability in Sec.~\ref{sec:Eg1-4}) Comparison of the distribution functions $f$ between the scaling adaptive and non-adaptive method with different expansion orders $N$, $t=30$. (a--c) Scaling adaptive method. (d--f) Non-adaptive method.}
	\label{fig:1-4-2}
\end{figure}

\begin{table}[hptb]
    \caption{(1D1V bump-on-tail instability in Sec.~\ref{sec:Eg1-4}) Average CPU time per time step (in seconds). Here, $T_{\rm non}$ and $T_{\rm adap}$ refer to the CPU time of the non-adaptive and scaling adaptive methods, respectively. $T_{\rm ind}$ refers to the CPU time spent on the adaptive adjustment step, which is a component of $T_{\rm adap}$.}
    \label{tab:1-4-1}
    \centering
    \def\arraystretch{1.3}
    {\footnotesize
    \begin{tabular}{l|cccc}
        & $T_{\rm non}$ & $T_{\rm adap}$ & $T_{\rm ind}$ & $T_{\rm ind}/T_{\rm adap}$ \\ \hline
        $N=128$ & 8.10E$-$03 & 1.11E$-$02 & 2.90E$-$03 & 26.22\% \\
        $N=256$ & 1.64E$-$02 & 2.10E$-$02 & 4.56E$-$03 & 21.73\% \\
        $N=512$ & 3.36E$-$02 & 4.24E$-$02 & 8.50E$-$03 & 20.06\% \\
        $N=1024$ & 6.39E$-$02 & 8.00E$-$02 & 1.51E$-$02 & 18.93\% \\
    \end{tabular}
    }
\end{table}

\subsection{2D2V examples}
\label{sec:2D2D}
In this subsection, three 2D2V numerical examples are studied, including the linear Landau damping, two-stream instability, and bump-on-tail instability. For the 2D2V setting, the Fourier-Hermite spectral method is extended by a direct tensor-product construction \cite{Huang2026}. Each velocity dimension $v_i$ ($i=1,2$) has its own scaling factor $\beta_i$ and truncation order $N_i$. The adaptive algorithm for the velocity space is generalized to the multi-dimensional case by defining frequency indicators for each velocity dimension and applying the one-dimensional adaptivity sequentially in each direction \cite{Xia2021a}. 
\begin{figure}
    \centering
    \begin{subfigure}[b]{0.4\linewidth}
		\includegraphics[width=\textwidth]{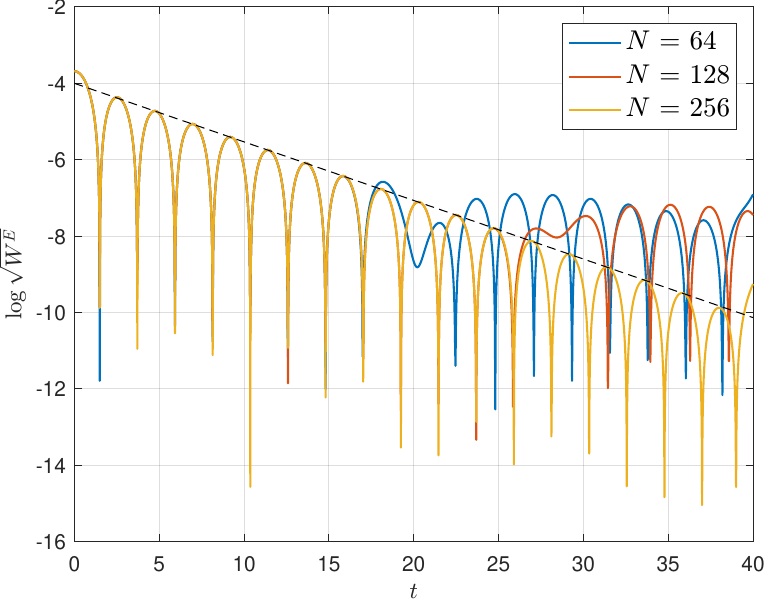}
		\caption{$W^E$, NA}
		\label{fig:2-1-1a}
	\end{subfigure}
	\qquad
	\begin{subfigure}[b]{0.4\linewidth}
		\includegraphics[width=\textwidth]{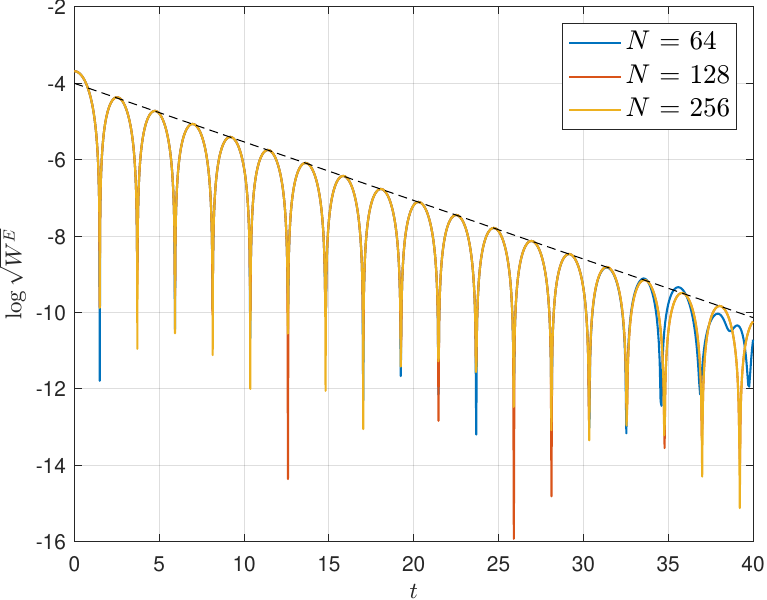}
		\caption{$W^E$, SA}
		\label{fig:2-1-1b}
	\end{subfigure}	
	\caption{(2D2V linear Landau damping in Sec.~\ref{sec:Eg2-1}) Comparison between the scaling adaptive and non-adaptive methods with different expansion orders $N$. (a) Evolution of the potential energy $W^E$ by the non-adaptive method. (b) Evolution of the potential energy $W^E$ by the scaling method.}
	\label{fig:2-1-1}
\end{figure}

\subsubsection{Linear Landau damping}\label{sec:Eg2-1}
In this subsection, the linear Landau damping problem is extended to the 2D2V case with the initial condition as follows
\begin{equation}\label{eq:exp2-Landau}
    f_0(\bm{x},\bm{v}) = \frac{1+\alpha\cos(kx_1)+\alpha\cos(kx_2)}{2\pi} \exp\left(-\frac{|\bm{v}|^2}{2}\right), \qquad \bm{x}\in[0,4\pi]^2,
\end{equation}
where $\alpha=0.001$, and $k=0.5$. The Fourier expansion order is $N_x=N_y=20$ in the spatial space, and the CFL number is set to $\CFL=0.8$. For the non-adaptive method, the scaling factors are fixed as $\beta_1=\beta_2=1$, while the parameters for the scaling adaptive method are chosen as in Tab.~\ref{tab:1-1-1}.

The time evolution of the potential energy obtained by the non-adaptive and scaling adaptive methods is shown in Fig.~\ref{fig:2-1-1a} and Fig.~\ref{fig:2-1-1b}, respectively. Similar to the one-dimensional case in Sec.~\ref{sec:Eg1-1}, the non-adaptive method exhibits recurrence phenomena. To accurately capture the decay up to $t=40$, a high expansion order of $N_1=N_2=256$ is required. In contrast, the scaling adaptive method achieves comparable accuracy with $N_1=N_2=128$ throughout the simulation.

The evolution of the scaling factors $\beta_1$ and $\beta_2$ is also examined. Owing to the isotropy of the problem, $\beta_1$ and $\beta_2$ evolve identically, which is presented in Fig.~\ref{fig:2-1-2a}. The scaling factor increases gradually over time, consistent with the behavior observed in the one-dimensional case (see Fig.~\ref{fig:1-1-2a}). The relative errors of total mass, energy, and the $L^2$ norm are shown in Fig.~\ref{fig:2-1-2b} for the scaling adaptive method with $N_1=N_2=64$. All errors remain below $10^{-12}$, demonstrating that the proposed projection algorithm preserves conservation properties with high accuracy in the high-dimensional setting.

The average computational time per time step for both methods is reported in Tab.~\ref{tab:2-1-1}. Here, $T_{\rm non}$ denotes the non-adaptive method with $N_1=N_2=256$, while $T_{\rm adap}$ corresponds to the scaling adaptive method with $N_1=N_2=128$, where comparable accuracy is achieved. It can be observed that the scaling adjustment accounts for approximately $40\%$--$50\%$ of the total computational cost of the adaptive method. Despite this additional overhead, the total time $T_{\rm adap}$ remains significantly smaller than $T_{\rm non}$. This demonstrates that the adaptive method achieves similar accuracy at roughly half the computational cost. For 2D2V problems, doubling the velocity expansion orders $N_1$ and $N_2$ leads to an approximately fourfold increase in computational cost per step. Therefore, improving resolution through scaling adaptivity rather than increasing the expansion order is a more efficient strategy.

\begin{figure}
    \centering
	\begin{subfigure}[b]{0.4\linewidth}
		\includegraphics[width=\textwidth]{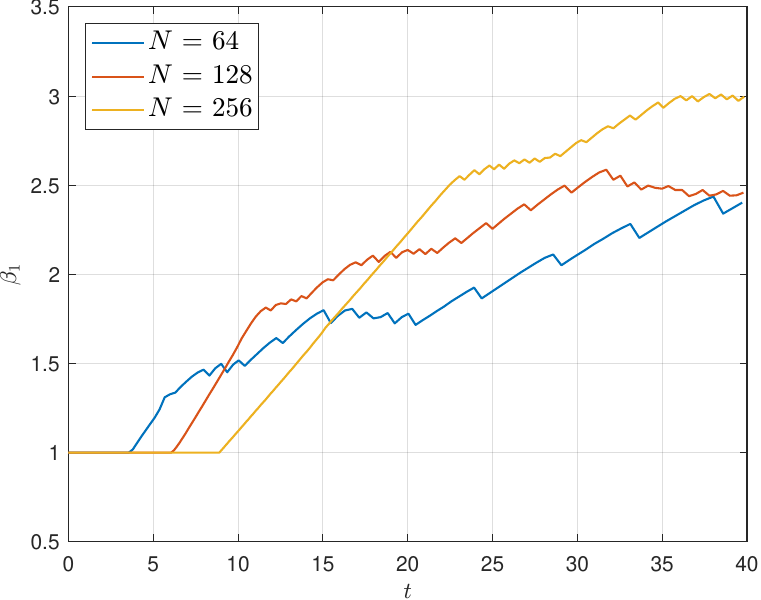}
		\caption{Scaling factor $\beta_1 = \beta_2$}
		\label{fig:2-1-2a}
	\end{subfigure}
    \qquad
    \begin{subfigure}[b]{0.41\linewidth}
		\includegraphics[width=\textwidth]{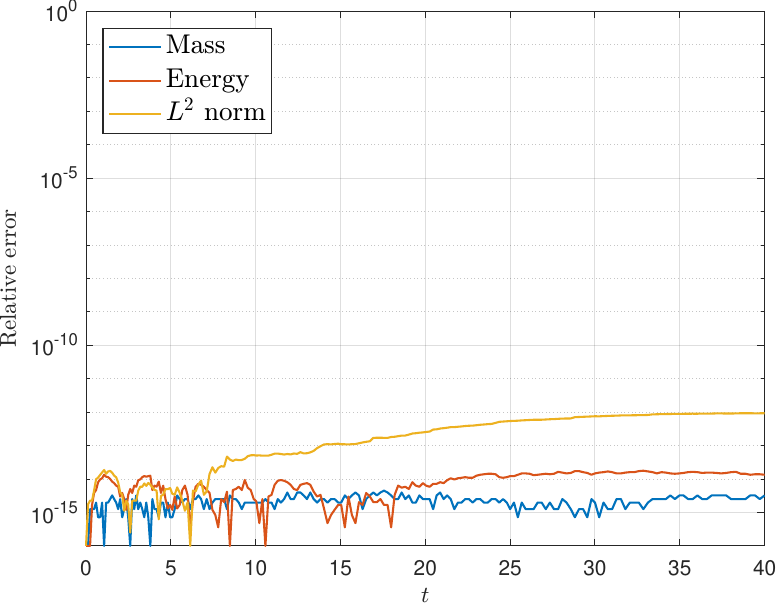}
		\caption{Errors in conserved quantities}
		\label{fig:2-1-2b}
	\end{subfigure}
	\caption{(2D2V linear Landau damping in Sec.~\ref{sec:Eg2-1}) Results of the scaling adaptive method with different expansion orders $N$. (a) Evolution of the scaling factor $\beta$. (b) Evolution of the errors in total mass, energy, and the $L^2$ norm, computed by the scaling adaptive method with $N_1=N_2=64$.}
	\label{fig:2-1-2}
\end{figure}

\begin{table}[hptb]
    \caption{(2D2V problems) Average wall-clock time (in seconds) per time step. Here, $T_{\rm non}$ and $T_{\rm adap}$ refer to the computational time of the non-adaptive and the adaptive method. $T_{\rm ind}$ refers to the computational time of the adjustment step in $T_{\rm adap}$. All tests are run with 64 threads.}
    \label{tab:2-1-1}
    \centering
    \def\arraystretch{1.3}
    {\footnotesize
    \begin{tabular}{c||rccc}
     & \multicolumn{1}{c}{$T_{\rm non}$} & $T_{\rm adap}$ & $T_{\rm ind}$ & $T_{\rm ind}/T_{\rm adap}$ \\ \hline
    Sec.~\ref{sec:Eg2-1} & 0.897 & 0.423 & 0.203 & 48.02\% \\
    Sec.~\ref{sec:Eg2-2} & 7.949 & 4.730 & 1.884 & 39.84\% \\
    Sec.~\ref{sec:Eg2-3} & 13.697 & 8.285 & 3.377 & 40.77\%
    \end{tabular}
    }
\end{table}

\subsubsection{Two-stream instability}\label{sec:Eg2-2}
In this subsection, the 2D2V two-stream instability problem is studied, which has the following initial condition 
\begin{gather*}
	f_0(\bm{x},\bm{v}) = \bigl(1+\alpha\cos(kx_1)+\alpha\cos(kx_2)\bigr) f_1(v_1) f_2(v_2),\qquad \bm{x}\in[0,40\pi/3]^2, \\
	f_1(v_1) = \sum_{i=1}^{2} \frac{\rho}{\sqrt{2\pi\theta_i}}\exp\left(-\frac{|v_1-u_i|^2}{2\theta_i}\right),\qquad 
	f_2(v_2) =\sum_{i=3}^{4} \frac{\rho}{\sqrt{2\pi\theta_i}}\exp\left(-\frac{|v_2-u_i|^2}{2\theta_i}\right),
\end{gather*}
where
\begin{equation*}
\begin{aligned}
	\rho=0.5,\qquad 
	u_1=-u_2=2.5,\quad u_3=-u_4=3.5,\qquad 
	\theta_1=\theta_2=1,\quad \theta_3=\theta_4=2,
\end{aligned}
\end{equation*}
with $\alpha=0.05$, and $k=0.15$. The spatial expansion orders are set to $N_x=N_y=96$, and the CFL number is $\CFL=0.8$. This problem is anisotropic in the microscopic velocity space, with different temperatures in the $v_1$ and $v_2$ directions. It is therefore utilized to validate the capability of the scaling adaptive method to resolve anisotropic features. The parameters of the scaling adaptive method are the same as those in Tab.~\ref{tab:1-1-1}. For the non-adaptive method, the scaling factors are fixed as $\beta_1=1/\sqrt{\theta_1}=1$ and $\beta_2=1/\sqrt{\theta_3}=1/\sqrt{2}$.

The evolution of the potential energy is shown in Fig.~\ref{fig:2-2-1a}. In this case, the non-adaptive method with a moderate expansion order $N_1=N_2=96$ already provides an accurate result, and no significant improvement from the adaptive method is observed for this quantity. The evolution of the scaling factors is presented in Fig.~\ref{fig:2-2-1b}. During the simulation, both $\beta_1$ and $\beta_2$ increase, while maintaining $\beta_2 < \beta_1$. This behavior reflects the anisotropic spreading of the distribution function and demonstrates that the adaptive strategy adjusts each velocity dimension consistently with its own features. 

The evolution of the relative error for the total mass, energy, and $L^2$ norm obtained by $N_1=N_2=32$ is shown in Fig.~\ref{fig:2-2-1c}. All errors remain at the level of machine precision, confirming that this adaptive method preserves conservation properties in the 2D2V setting.

To further examine the phase-space resolution, the distribution function is compared in the $(v_1,x)$ plane at $(v_2,y)=(0,20\pi/3)$ and in the $(v_2,y)$ plane at $(v_1,x)=(0,20\pi/3)$, as shown in Fig.~\ref{fig:2-2-2}. In the $(v_1,x)$ plane, the non-adaptive method with $N_1=N_2=96$ exhibits non-physical oscillations near the center, which are reduced when the expansion order is increased to $N_1=N_2=160$. In contrast, the scaling adaptive method produces a well-resolved structure with $N_1=N_2=96$. In the $(v_2,y)$ plane, the solution develops broader structures due to the higher temperature. Even with $N_1=N_2=160$, the non-adaptive method still exhibits noticeable oscillations, whereas the scaling adaptive method with $N_1=N_2=96$ provides a more accurate representation. These results demonstrate that scaling adaptivity effectively captures anisotropic features in multi-dimensional velocity space.

The average computational time per time step for the non-adaptive method with $N_1=N_2=160$ and the scaling adaptive method with $N_1=N_2=96$ is reported in Tab.~\ref{tab:2-1-1}. The adaptive adjustment accounts for less than $40\%$ of the total cost, and the overall computational time remains significantly lower than that of the non-adaptive method, demonstrating the efficiency of the proposed method.

\begin{figure}
    \centering
    \begin{subfigure}[b]{0.32\linewidth}
		\includegraphics[width=\textwidth]{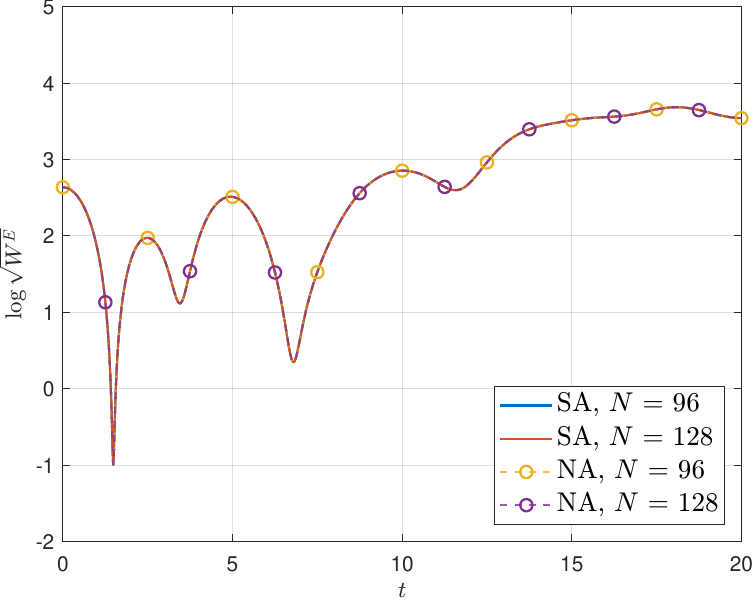}
		\caption{$W^E$}
		\label{fig:2-2-1a}
	\end{subfigure}
	\hfill
	\begin{subfigure}[b]{0.323\linewidth}
		\includegraphics[width=\textwidth]{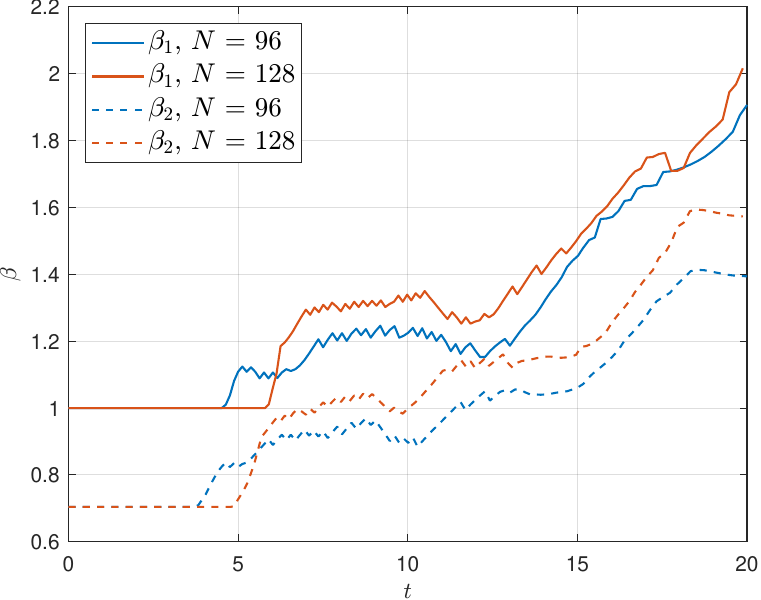}
		\caption{Scaling factors}
		\label{fig:2-2-1b}
	\end{subfigure}
    \hfill
	\begin{subfigure}[b]{0.33\linewidth}
		\includegraphics[width=\textwidth]{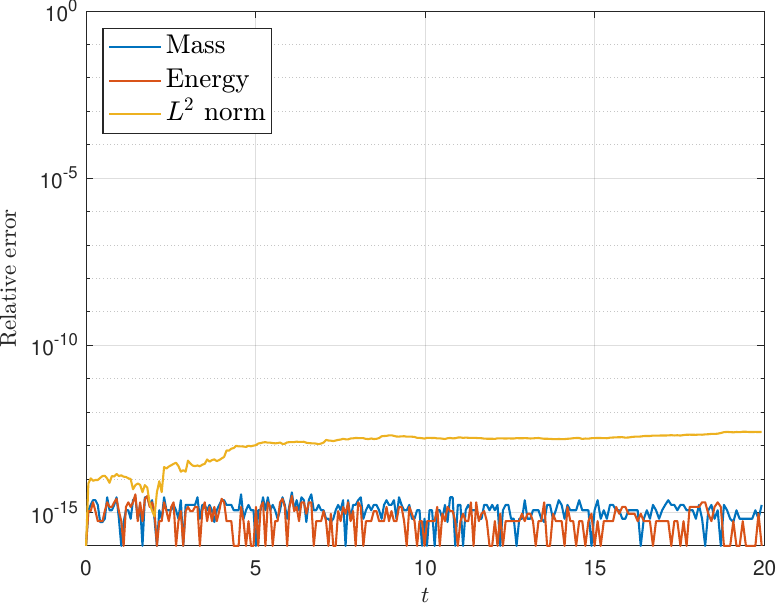}
		\caption{Errors in conserved quantities}
		\label{fig:2-2-1c}
	\end{subfigure}
	
	\caption{(2D2V two-stream instability in Sec.~\ref{sec:Eg2-2}) Comparison between the scaling adaptive and non-adaptive methods with different expansion orders $N$. (a) Evolution of the potential energy $W^E$ by two methods. (b) Evolution of the scaling factor $\beta$. (c) Evolution of the relative errors in total mass, energy, and the $L^2$ norm, computed by the scaling adaptive method with $N_1=N_2=32$.}
	\label{fig:2-2-1}
\end{figure}

\begin{figure}
    \centering
    \begin{subfigure}[b]{0.24\linewidth}
		\includegraphics[width=\textwidth]{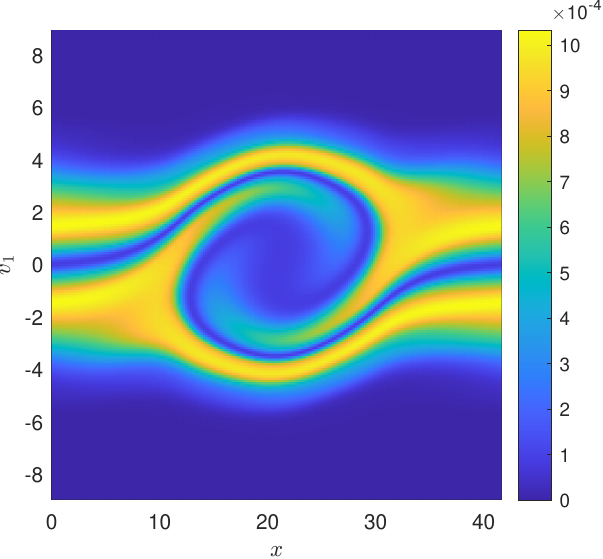}
		\caption{$(v_1,x)$, $N=96$, SA}
	\end{subfigure}
    \hfill
    \begin{subfigure}[b]{0.24\linewidth}
		\includegraphics[width=\textwidth]{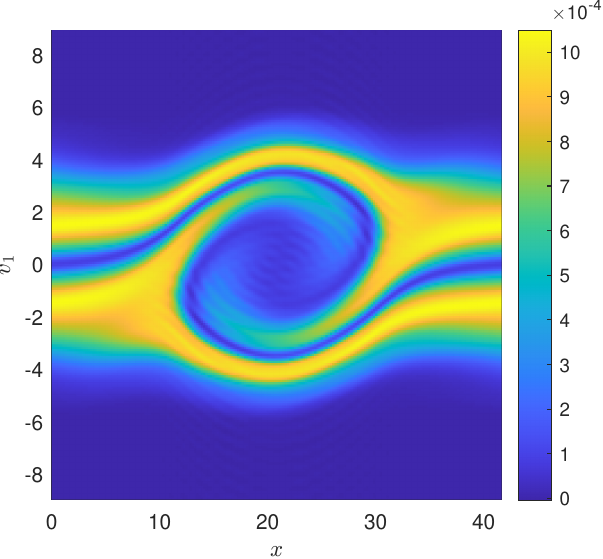}
		\caption{$(v_1,x)$, $N=96$, NA}
	\end{subfigure}
	\hfill
    \begin{subfigure}[b]{0.24\linewidth}
		\includegraphics[width=\textwidth]{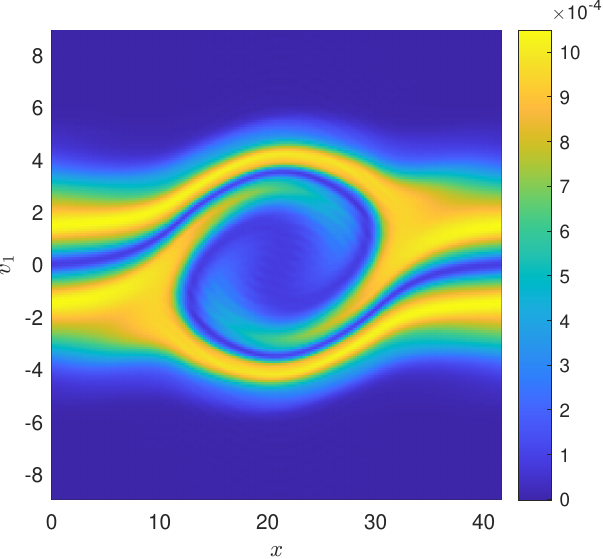}
		\caption{$(v_1,x)$, $N=128$, NA}
	\end{subfigure}
    \hfill
    \begin{subfigure}[b]{0.24\linewidth}
		\includegraphics[width=\textwidth]{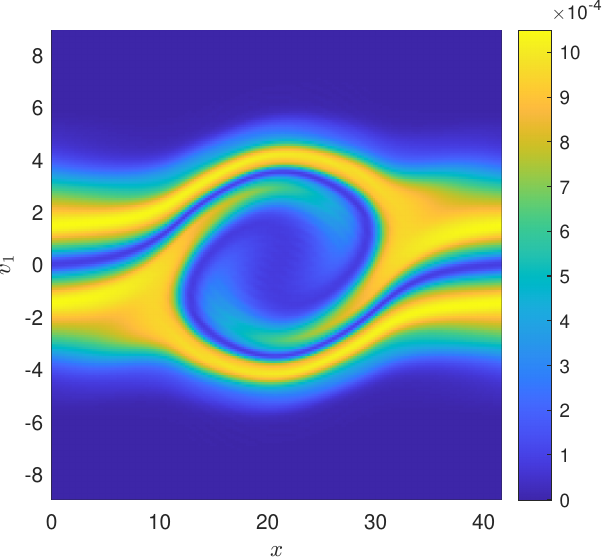}
		\caption{$(v_1,x)$, $N=160$, NA}
	\end{subfigure}
    \\ \medskip
    \begin{subfigure}[b]{0.24\linewidth}
		\includegraphics[width=\textwidth]{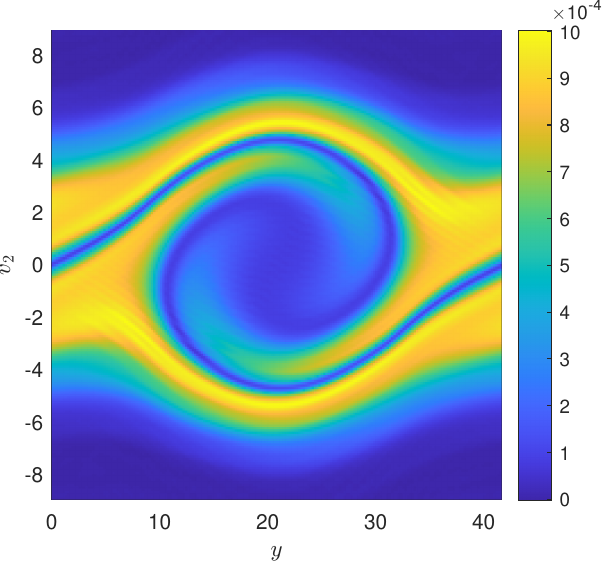}
		\caption{$(v_2,y)$, $N=96$, SA}
	\end{subfigure}
    \hfill
    \begin{subfigure}[b]{0.24\linewidth}
		\includegraphics[width=\textwidth]{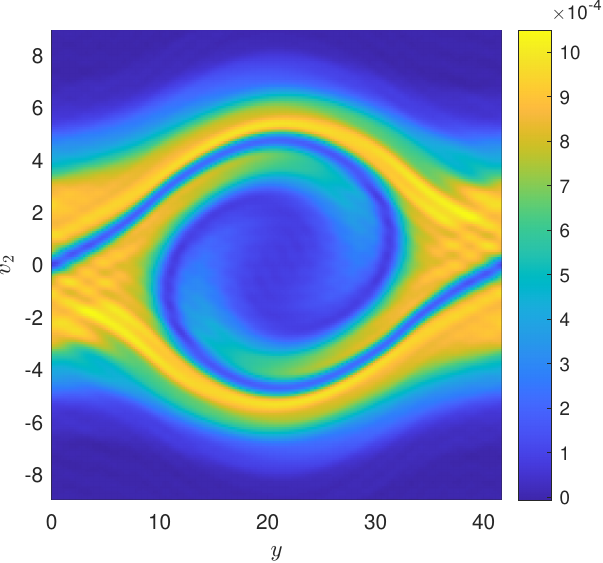}
		\caption{$(v_2,y)$, $N=96$, NA}
	\end{subfigure}
	\hfill
    \begin{subfigure}[b]{0.24\linewidth}
		\includegraphics[width=\textwidth]{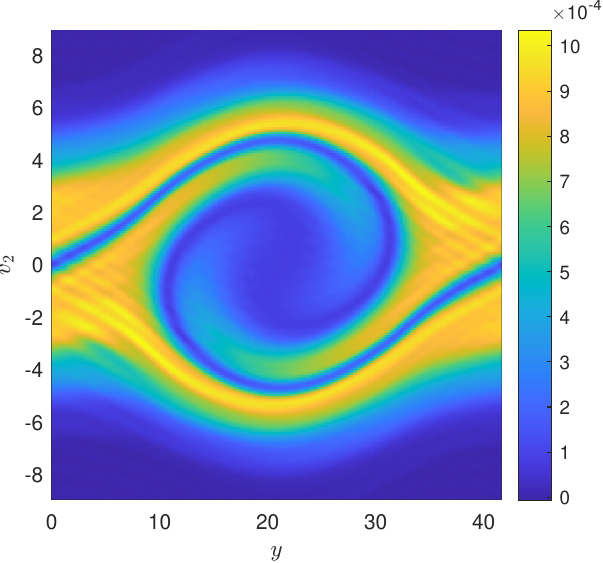}
		\caption{$(v_2,y)$, $N=128$, NA}
	\end{subfigure}
    \hfill
    \begin{subfigure}[b]{0.24\linewidth}
		\includegraphics[width=\textwidth]{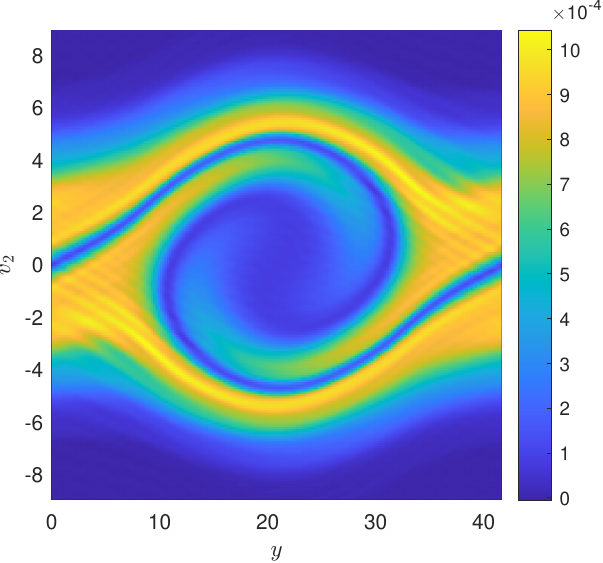}
		\caption{$(v_2,y)$, $N=160$, NA}
	\end{subfigure}
    
	\caption{(2D2V two-stream instability in Sec.~\ref{sec:Eg2-2}) Comparison of the distribution $f$ between the scaling adaptive and non-adaptive method with different expansion orders $N$ at $t=20$. (a--d) Distribution $f(v_1,0,x,L/2)$. (e--h) Distribution $f(0,v_2,L/2,y)$.}
	\label{fig:2-2-2}
\end{figure}

\subsubsection{Bump-on-tail instability}\label{sec:Eg2-3}
The 2D2V bump-on-tail instability problem is considered here, with the initial condition as 
\begin{gather*}
	f_0(\bm{x},\bm{v}) = \bigl(1+\alpha\cos(kx_1)+\alpha\cos(kx_2)\bigr) f_1(v_1) f_2(v_2),\qquad \bm{x}\in[0,20\pi/3]^2,\\
	f_1(v_1) = \sum_{i=1}^{2} \frac{\rho_i}{\sqrt{2\pi\theta_i}}\exp\left(-\frac{|v_1-u_i|^2}{2\theta_i}\right),\qquad 
	f_2(v_2) =\sum_{i=3}^{5} \frac{\rho_i}{\sqrt{2\pi\theta_i}}\exp\left(-\frac{|v_2-u_i|^2}{2\theta_i}\right),
\end{gather*}
where
\begin{equation*}
\begin{gathered}
	\rho_1=0.9,\quad \rho_2=\rho_4=0.1,\qquad \rho_3=0.85,\qquad \rho_5=0.05,\\
	u_1=u_3=0,\quad u_2=u_4=-u_5=4.5,\qquad 
	\theta_1=\theta_3=1,\quad \theta_2=\theta_4=\theta_5=0.25,
\end{gathered}
\end{equation*}
with $\alpha=0.05$, $k=0.3$. The Fourier expansion orders are set to $N_x=N_y=64$, and the CFL number is $\CFL=0.8$. This initial distribution is weakly anisotropic, with an additional high-velocity component in the $v_2$ direction. The scaling factors in the non-adaptive method are fixed as $\beta_1=\beta_2=1$, while the parameters for the scaling adaptive method are given in Tab.~\ref{tab:1-1-1}.

The evolution of the potential energy is shown in Fig.~\ref{fig:2-3-1a}. The results produced by the adaptive and non-adaptive methods are nearly indistinguishable, indicating that both approaches capture the global dynamics accurately at moderate resolution. The evolution of the scaling factors is presented in Fig.~\ref{fig:2-3-1b}. Both $\beta_1$ and $\beta_2$ increase over time. A slight separation between $\beta_1$ and $\beta_2$ is observed, consistent with the weak anisotropy of the initial condition. Fig.~\ref{fig:2-3-1c} shows the relative errors in total mass, energy, and the $L^2$ norm. All quantities are preserved at the level of machine precision throughout the simulation, confirming the conservation properties of the proposed method.

The distribution function is further compared in Fig.~\ref{fig:2-3-2}. In the $(v_1,x)$ plane, the non-adaptive method with $N_1=N_2=192$ exhibits pronounced non-physical oscillations, which remain noticeable even when the resolution is increased to $N_1=N_2=320$. In contrast, the scaling adaptive method produces a smooth and well-resolved structure already with $N_1=N_2=192$. Similar behavior is observed in the $(v_2,y)$ plane, where the adaptive method consistently captures finer structures more accurately. These results indicate that the scaling adaptive method effectively improves phase-space accuracy without increasing the expansion order.

The average computational time per time step for the non-adaptive method with $N_1=N_2=320$ and the scaling adaptive method with $N_1=N_2=192$ is reported in Tab.~\ref{tab:2-1-1}. The scaling adjustments take up approximately $40\%$ of the total cost, while the overall runtime remains significantly lower than that of the non-adaptive method, demonstrating the efficiency of the adaptive strategy.

\begin{figure}
    \centering
    \begin{subfigure}[b]{0.32\linewidth}
		\includegraphics[width=\textwidth]{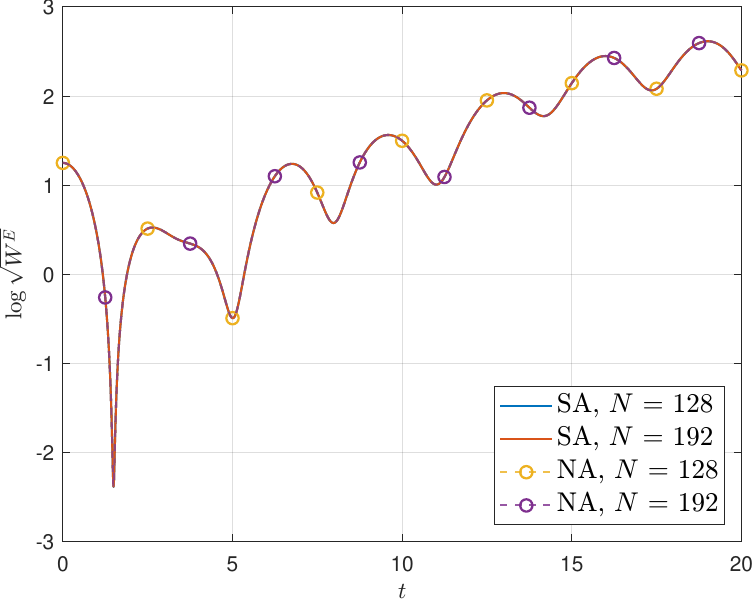}
		\caption{$W^E$}
		\label{fig:2-3-1a}
	\end{subfigure}
	\hfill
	\begin{subfigure}[b]{0.322\linewidth}
		\includegraphics[width=\textwidth]{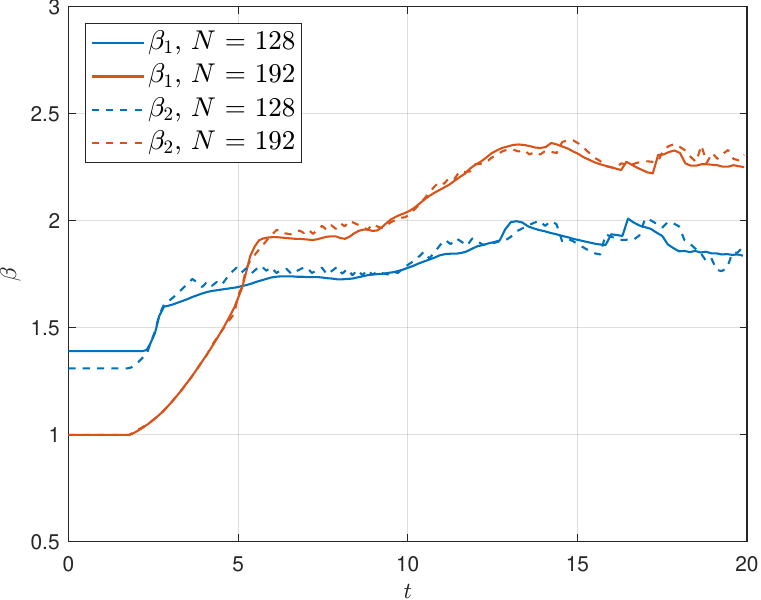}
		\caption{Scaling factors}
		\label{fig:2-3-1b}
	\end{subfigure}
    \hfill
	\begin{subfigure}[b]{0.329\linewidth}
		\includegraphics[width=\textwidth]{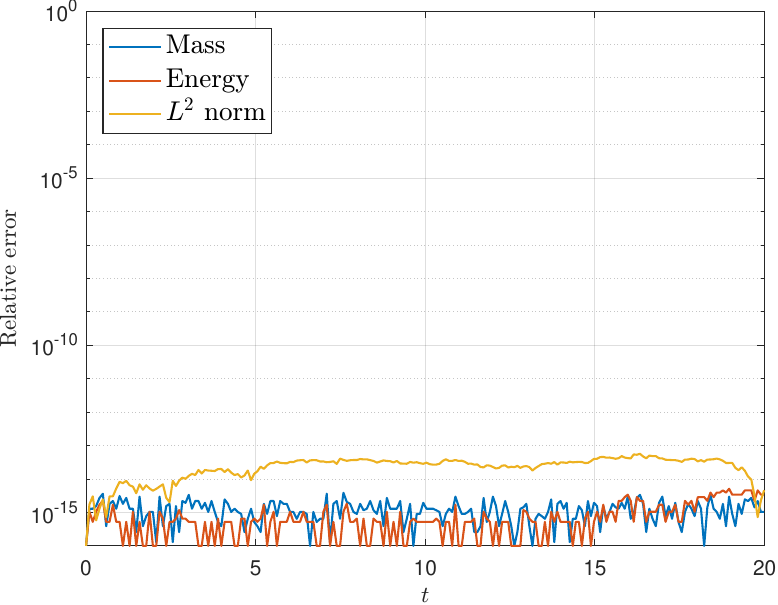}
		\caption{Errors in conserved quantities}
		\label{fig:2-3-1c}
	\end{subfigure}
	
	\caption{(2D2V bump-on-tail instability in Sec.~\ref{sec:Eg2-3}) Comparison between the scaling adaptive and non-adaptive methods with different expansion orders $N$. (a) Evolution of the potential energy $W^E$ by two methods. (b) Evolution of the scaling factor $\beta$. (c) Evolution of the relative errors in total mass, energy, and the $L^2$ norm, computed by the scaling adaptive method with $N_1=N_2=32$.}
	\label{fig:2-3-1}
\end{figure}

\begin{figure}
    \centering
    \begin{subfigure}[b]{0.32\linewidth}
		\includegraphics[width=\textwidth]{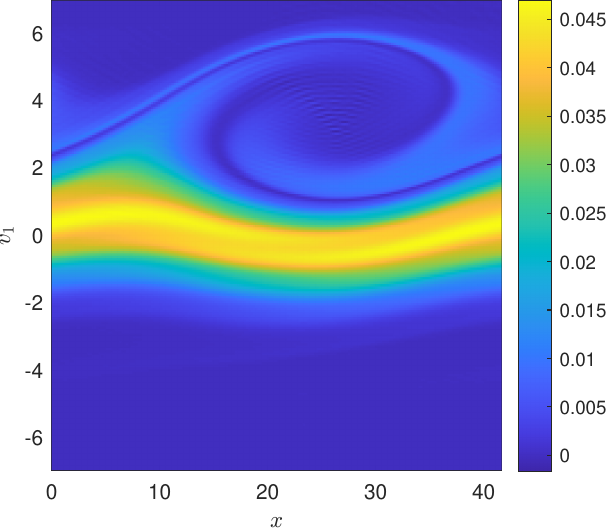}
		\caption{$(v_1,x)$, $N=192$, SA}
	\end{subfigure}
	\hfill
    \begin{subfigure}[b]{0.32\linewidth}
		\includegraphics[width=\textwidth]{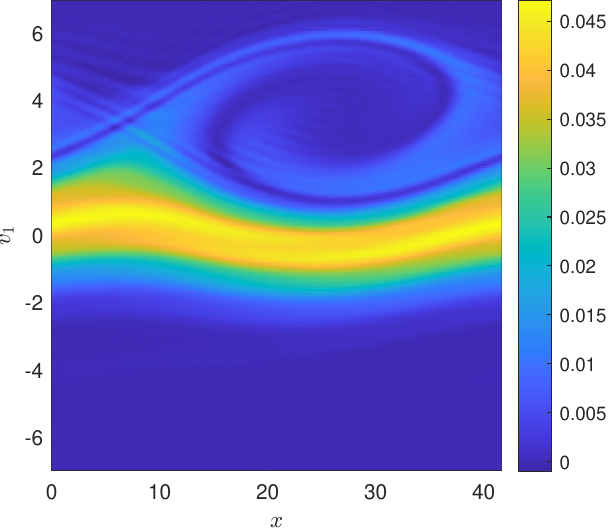}
		\caption{$(v_1,x)$, $N=192$, NA}
	\end{subfigure}
    \hfill
    \begin{subfigure}[b]{0.32\linewidth}
		\includegraphics[width=\textwidth]{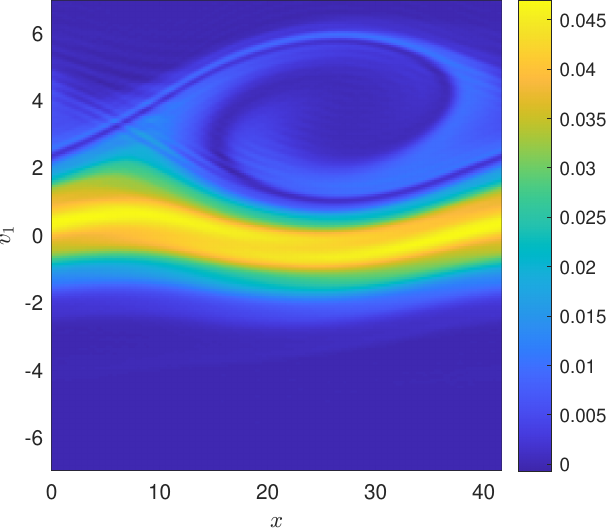}
		\caption{$(v_1,x)$, $N=320$, NA}
	\end{subfigure}
	\\ \medskip
    \begin{subfigure}[b]{0.32\linewidth}
		\includegraphics[width=\textwidth]{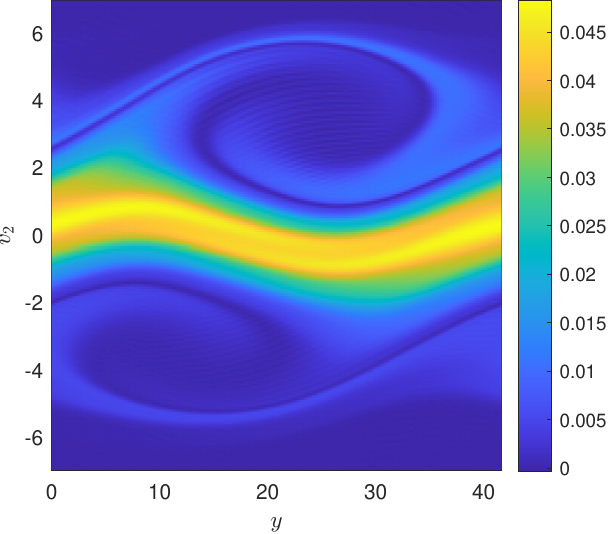}
		\caption{$(v_2,y)$, $N=192$, SA}
	\end{subfigure}
	\hfill
    \begin{subfigure}[b]{0.32\linewidth}
		\includegraphics[width=\textwidth]{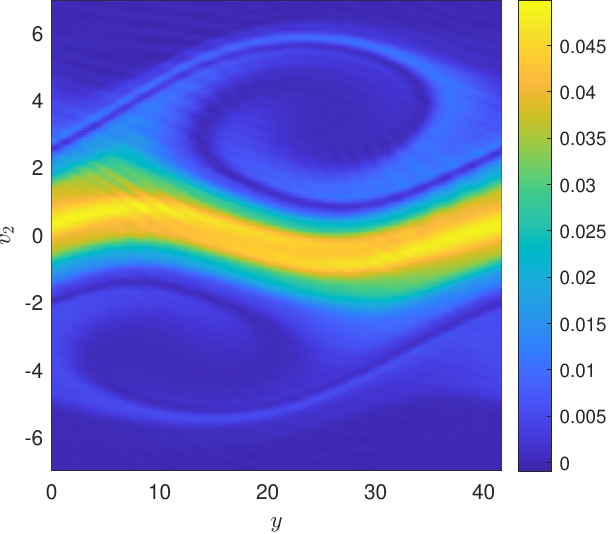}
		\caption{$(v_2,y)$, $N=192$, NA}
	\end{subfigure}
    \hfill
    \begin{subfigure}[b]{0.32\linewidth}
		\includegraphics[width=\textwidth]{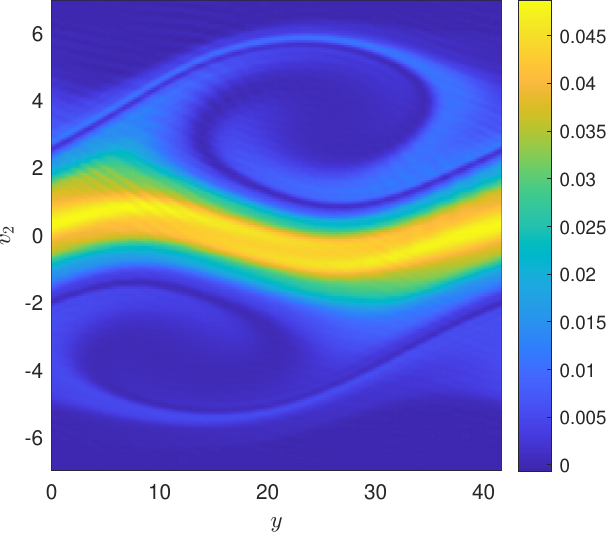}
		\caption{$(v_2,y)$, $N=320$, NA}
	\end{subfigure}
	
	\caption{(2D2V bump-on-tail instability in Sec.~\ref{sec:Eg2-3}) Comparison of the distribution $f$ between the scaling adaptive and non-adaptive method with different expansion orders $N$, $t=20$. (a--c) Distribution $f(v_1,0,x,L/2)$. (d--f) Distribution $f(0,v_2,L/2,y)$.}
	\label{fig:2-3-2}
\end{figure}

\section{Conclusions}
\label{sec:con}

In this work, an adaptive algorithm is developed in the framework of the Hermite spectral method for the Vlasov-Poisson system. The method employs symmetrically weighted Hermite basis functions and incorporates a scaling adaptive algorithm based on a frequency indicator constructed from the expansion coefficients. To enable efficient scaling adjustments, a fast conservative projection operator with linear computational complexity is proposed to ensure the conservation in both macroscopic variables and the $L^2$ norm.

The proposed method has been validated on four 1D1V and three 2D2V benchmark problems. The results demonstrate that the scaling adaptivity effectively captures the filamentation while maintaining the desired conservation properties. For future work, we aim to extend the adaptive Hermite method to the Vlasov--Maxwell system and other kinetic models.

\appendix




\section*{Acknowledgements}
The first and third authors were partially supported by the National Natural Science Foundation of China (Nos. 12325112, 12288101). This work of the second author was partially supported by the National Key Laboratory of Computational Physics, No. SYSM-2006-WDZC-15, and the Science Challenge program (NO. TZ2025016). This work was supported by the High-performance Computing Platform of Peking University. The authors would like to thank Yixiao Tang for his helpful discussions during the early stages of this work.
\section{Pseudocodes of the adaptive algorithms}\label{sec:appA}
The pseudocode of the scaling adaptive algorithm is presented in Alg.~\ref{alg:ada-s}. The complete numerical scheme is presented in Alg.~\ref{alg:compl_scheme}. The scheme that omits Step~\ref{sch:43} and uses a fixed $\beta$ corresponds to the non-adaptive Hermite method.

\begin{algorithm}
\caption{The scaling adaptive algorithm.}
\label{alg:ada-s}
\begin{algorithmic}[1]
    \Require Coefficients $\left\{\hat{f}_{k,j}^{\beta}\right\}_{k,j}$, scaling factor $\beta$, reference indicator $\mathcal{F}^{(s)}$;
    \Para $\beta_{\max}$, $\beta_{\min}$, $0<\eta_{l}^{(s)}\leqslant1\leqslant\eta_{h}^{(s)}$, $0<q<1$, $\mathcal{F}_0>0$.    
    \While{$\mathcal{F}[f_{N,N_x}^{\beta}](t^n)> \mathcal{F}_0$ \AND ($\mathcal{F}[f_{N,N_x}^{\beta}](t^n)>\eta_{h}^{(s)} \mathcal{F}^{(s)}$ \OR $\mathcal{F}[f_{N,N_x}^{\beta}](t^n)<\eta_{l}^{(s)} \mathcal{F}^{(s)}$})
    \State $\beta_{\rm de} \gets \beta q$;
    \State $f_{N,N_x}^{\beta_{\rm de}} \gets \mathcal{T}^{\beta\ra\beta_{\rm de}}f_{N,N_x}^{\beta}$;
    \State $\beta_{\rm in} \gets \beta/q$;
    \State $f_{N,N_x}^{\beta_{\rm in}} \gets \mathcal{T}^{\beta\ra\beta_{\rm in}}f_{N,N_x}^{\beta}$;
    \If{$\mathcal{F}[f_{N,N_x}^{\beta_{\rm de}}](t^n)\leqslant\min\left(\mathcal{F}[f_{N,N_x}^{\beta}](t^n),\mathcal{F}[f_{N,N_x}^{\beta_{\rm in}}](t^n)\right)$ \AND $\beta_{\rm de}\geqslant\beta_{\min}$}
    \State $\beta \gets \beta_{\rm de}$;
    \State $f_{N,N_x}^{\beta}\gets f_{N,N_x}^{\beta_{\rm de}}$;
    \ElsIf{$\mathcal{F}[f_{N,N_x}^{\beta_{\rm in}}](t^n)\leqslant\min\left(\mathcal{F}[f_{N,N_x}^{\beta_{\rm de}}](t^n),\mathcal{F}[f_{N,N_x}^{\beta}](t^n)\right)$ \AND $\beta_{\rm in}\leqslant\beta_{\max}$}
    \State $\beta \gets \beta_{\rm in}$;
    \State $f_{N,N_x}^{\beta}\gets f_{N,N_x}^{\beta_{\rm in}}$;
    \Else
    \State \Break;
    \EndIf
    \EndWhile
    \If{$\beta$ is modified}
    \State $\mathcal{F}^{(s)}\gets\mathcal{F}[f_{N,N_x}^{\beta}](t^n)$;
    \EndIf
\end{algorithmic}
\end{algorithm}


\begin{algorithm}
\caption{Adaptive Hermite method for the Vlasov-Poisson system.}
\label{alg:compl_scheme}
\begin{algorithmic}[1]
    \Require Initial distribution function $f_0(x,v)$, velocity expansion orders $N$, spatial expansion order $N_x$, time step size $\Delta t$;
    \State At the initial time step $t^0 = 0$, for the given initial distribution $f(0,x,v)$ and expansion order $N$, determine the optimal scaling factor $\beta$ by \eqref{eq:beta_init} and compute the coefficients $\left(\hat{f}_{k,j}^{\beta}\right)^0$ for $0\leqslant k\leqslant N$, $0\leqslant j<N_x$. Initialize the reference indicator $\mathcal{F}^{(s)}$.
    
    \State \label{sch:42}At time step $t^n$, solve the ODE system \eqref{eq:ODEs} using a fourth-order Runge-Kutta method (RK4) to obtain $\left(\hat{f}_{k,j}^{\beta}\right)^{n+1}$ for $0\leqslant k\leqslant N$, $0\leqslant j<N_x-1$.
    
    \State \label{sch:43}Check whether a scaling adjustment is required using Alg.~\ref{alg:ada-s}. If so, update the scaling factor $\beta$ and coefficients $\left(\hat{f}_{k,j}^{\beta}\right)^{n+1}$ for $0\leqslant k\leqslant N$, $0\leqslant j<N_x-1$.
    
    
    \State If the final time is not reached, update $t^{n+1} =t^n+\Delta t$ and go back to Step~\ref{sch:42}.
\end{algorithmic}
\end{algorithm}

\section{Analysis of the consistency condition}\label{sec:appB}
This section analyzes the consistency condition \eqref{eq:opt_consis}, which can be rewritten as the quadratic inequality
\begin{equation}\label{eq:consis_quad}
    \bm{f}^{\beta^\top}\left(I_{N+1} -B\right)\bm{f}^{\beta} \geqslant 0, \qquad     B\coloneq \mathcal{I}_{\beta}^{}\left(\mathcal{I}_{\beta'}^{\top}\mathcal{I}_{\beta'}^{}\right)^{-1}\mathcal{I}_{\beta}^{\top}.
\end{equation}
Therefore, the feasibility of the consistency condition is determined by the spectrum of $I_{N+1}-B$, or equivalently, the spectrum of $B$. Observing that
\begin{equation}
    \mathcal{I}_{\beta}^{} = \mathcal{I}_1 D_{\beta},
\end{equation}
where $D_{\beta}\coloneq \mathrm{diag}\left(\beta^{1/2},\beta^{3/2},\beta^{5/2}\right)\in\RR^{3\times3}$, it holds for $B$ that 
\begin{equation}
    B = \mathcal{I}_{1}^{}D_{\beta}^{-1}\left(D_{\beta'}^{-1}\mathcal{I}_{1}^{\top}\mathcal{I}_{1}^{}D_{\beta'}^{-1}\right)^{-1}D_{\beta}^{-1}\mathcal{I}_{1}^{\top} = \mathcal{I}_{1}^{}D_{q}\left(\mathcal{I}_{1}^{\top}\mathcal{I}_{1}^{}\right)^{-1}D_{q}\mathcal{I}_{1}^{\top},
\end{equation}
which only depends on the ratio of the scaling factors $q=\beta'/\beta$. Since $\mathrm{rank}(B)=3$, the matrix $B$ only has three nonzero eigenvalues, denoted by $\lambda_1,\lambda_2,\lambda_3$, which coincide the eigenvalues of the $3\times3$ matrix
\begin{equation}
    \tilde{B}\coloneq D_{q}\left(\mathcal{I}_{1}^{\top}\mathcal{I}_{1}^{}\right)^{-1}D_{q}\left(\mathcal{I}_{1}^{\top}\mathcal{I}_{1}^{}\right).
\end{equation}
Notice from \eqref{eq:I} that $\mathcal{I}_1$ exhibits a chessboard structure, namely
\begin{equation}
    I_{k,r}^1 = 0,\qquad 2\nmid k+r.
\end{equation}
Consequently, the same sparsity pattern is inherited by $\mathcal{I}_{1}^{\top}\mathcal{I}_{1}^{}$, $\left(\mathcal{I}_{1}^{\top}\mathcal{I}_{1}^{}\right)^{-1}$, and $\tilde{B}$. Assume that $\mathcal{I}_{1}^{\top}\mathcal{I}_{1}^{}$ has elements
\begin{equation}
    \mathcal{I}_{1}^{\top}\mathcal{I}_{1}^{}=\begin{pmatrix}
        a&0&b \\ 0&c&0 \\ b&0&d
    \end{pmatrix},
\end{equation}
then
\begin{equation}
    \left(\mathcal{I}_{1}^{\top}\mathcal{I}_{1}^{}\right)^{-1}=\begin{pmatrix}
        d/\Delta&0&-b/\Delta \\ 0&c^{-1}&0 \\ -b/\Delta&0&a/\Delta
    \end{pmatrix},\qquad \Delta=ad-b^2.
\end{equation}
This shows that the second coordinate decouples from the first and third in $\tilde{B}$. In particular, one can check that $(0,1,0)^\top$ is an eigenvector of $\tilde{B}$, with the eigenvalue
\begin{equation}
    \lambda_2=q^3.
\end{equation}
Therefore, when $q>1$, the matrix $\tilde{B}$ has an eigenvalue $\lambda_2>1$, implying that $I_{N+1}-B$ has a negative eigenvalue. So $I_{N+1}-B$ is not semi-definite positive for all $q>1$, and the consistency condition \eqref{eq:consis_quad} may fail in general.

Although the consistency condition may fail in principle, such violations are unlikely in practice. Precisely, for the remaining two nonzero eigenvalues of $B$, one has
\begin{align}
    \lambda_1\lambda_3 =\frac{(\det D_q)^2}{\lambda_2^2}=q^6,\qquad     \lambda_1+\lambda_3 =\mathrm{tr}(A)-\lambda_2=\frac{ad(q+q^5)-2b^2q^3}{ad-b^2}.
\end{align}
Therefore,
\begin{equation}
    \lambda_{1},\lambda_3=\frac{1}{2}\left(\frac{ad(q+q^5)-2b^2q^3}{ad-b^2}\pm\sqrt{\left(\frac{ad(q+q^5)-2b^2q^3}{ad-b^2}\right)^2-4q^6}\right).
\end{equation}
In practical applications, the scaling ratio satisfies $q=\beta'/\beta\approx1$. Suppose $q=1+\epsilon$ with $|\epsilon|\ll1$, a straightforward calculation yields
\begin{equation}
    \lambda_{1},\lambda_3=1+O(\epsilon).
\end{equation}
Hence, all three nonzero eigenvalues of $B$ remain close to $1$. Recall that the spectrum of $I_{N+1}-B$ consists of $1-\lambda_1,1-\lambda_2,1-\lambda_3$, together with the eigenvalue $1$ of multiplicity $N-2$. Therefore, 
\begin{equation}
    \sigma(I_{N+1}-B)=\{O(\epsilon),O(\epsilon),O(\epsilon),1,1,...,1\}.
\end{equation}
Even if the first three eigenvalues become slightly negative, their magnitude is of order $O(\epsilon)$. In contrast, the remaining $N-2$ eigenvalues are exactly $1$. As a result, the quadratic form \eqref{eq:consis_quad} is unlikely to become negative for typical coefficient vectors $\bm{f}^\beta$, which explains why violations of the consistency condition are not observed in numerical simulations.

\section{Proof of Prop.~\ref{prop:proj_ODE}}\label{sec:appC}
In this section, the proof of Prof.~\ref{prop:proj_ODE} is presented.

\begin{proof}[Proof of Prop.~\ref{prop:proj_ODE}]
    The three-term recurrence relation and the derivative of $\swH_k^\beta$ read
    \begin{align}
        \swH_{k+1}^\beta(v) &= \beta v\sqrt{\frac{2}{k+1}}\swH_{k}^\beta(v) - \sqrt{\frac{k}{k+1}}\swH_{k-1}^\beta(v), \label{eq:three_term}\\
        \frac{\dd\swH_k^\beta(v)}{\dd v} &= -\beta\sqrt{\frac{k+1}{2}}\swH_{k+1}^\beta(v) + \beta\sqrt{\frac{k}{2}}\swH_{k-1}^\beta(v). \label{eq:dHdv}
    \end{align}
    Using \eqref{eq:SW_basis} and \eqref{eq:dHdv}, a direct calculation gives
    \begin{align*}
        \frac{\dd\swH^\beta_k(v)}{\dd\beta}
         &= \frac{\dd}{\dd\beta}\left[\sqrt{\beta}\swH^1_k(\beta v)\right]  
         = \frac{1}{2\sqrt{\beta}}\swH^1_k(\beta v) + \sqrt{\beta}\frac{\dd}{\dd(\beta v)}\left[\swH^1_k(\beta v)\right]\frac{\dd\beta v}{\dd\beta}\\
        & = \frac{1}{2\beta}\swH^\beta_k(v) + v\left[-\sqrt{\frac{k+1}{2}}\swH^\beta_{k+1}(v) + \sqrt{\frac{k}{2}}\swH^\beta_{k-1}(v)\right].
    \end{align*}
    Further by the three-term recurrence relation \eqref{eq:three_term}, we have
    \begin{align*}
        \frac{\dd\swH^\beta_k(v)}{\dd\beta}
        = &\, \frac{1}{2\beta}\swH^\beta_k(v) - \frac{1}{\beta}\sqrt{\frac{k+1}{2}}\left[\sqrt{\frac{k+2}{2}}\swH^\beta_{k+2}(v)+\sqrt{\frac{k+1}{2}}\swH^\beta_{k}(v)\right]
         +\frac{1}{\beta}\sqrt{\frac{k}{2}}\left[\sqrt{\frac{k}{2}}\swH^\beta_{k}(v)+\sqrt{\frac{k-1}{2}}\swH^\beta_{k-2}(v)\right]\\
        = & -\frac{\sqrt{(k+1)(k+2)}}{2\beta}\swH^\beta_{k+2}(v) + \frac{\sqrt{k(k-1)}}{2\beta}\swH^\beta_{k-2}(v).
    \end{align*}
    Using this differential property, the derivative of $\tilde{h}_l(s)$ with respect to $s$ can be computed explicitly.
    \begin{align*}
        \frac{\dd\tilde{h}_l}{\dd s}
        & = \sum_{k=0}^{N} \hat{f}_k^\beta \int_{\RR}\swH_k^\beta(v)\frac{\dd\swH_l^{b}(v)}{\dd s}\,\dd v  = \sum_{k=0}^{N} \hat{f}_k^\beta \int_{\RR}\swH_k^\beta(v)\frac{\dd\swH_l^{b}(v)}{\dd b}\frac{\dd b}{\dd s}\,\dd v\\
        & = \sum_{k=0}^{N} \hat{f}_k^\beta \int_{\RR}\swH_k^\beta(v) \,b\ln\left(\frac{\beta'}{\beta}\right)  \left[-\frac{\sqrt{(l+1)(l+2)}}{2b}\swH^b_{l+2}(v) + \frac{\sqrt{l(l-1)}}{2b}\swH^b_{l-2}(v)\right]\,\dd v\\
        & = \frac{\ln(\beta'/\beta)}{2}\left(\sqrt{l(l-1)}\tilde{h}_{l-2}-\sqrt{(l+1)(l+2)}\tilde{h}_{l+2}\right).
    \end{align*}
Prop.~\ref{prop:proj_ODE} is then proved. 
\end{proof}

\bibliographystyle{abbrv}
\bibliography{sections/references}

\end{document}